\numberwithin{equation}{subsection}
\newtheorem{prop}[equation]{Proposition}
\newtheorem*{prop*}{Proposition}
\newtheorem{lmm}[equation]{Lemma}
\newtheorem{thm}[equation]{Theorem}
\newtheorem*{thm*}{Theorem}
\newtheorem{varthm}{Theorem}
\newtheorem{conj}{Conjecture}
\newtheorem{cor}[equation]{Corollary}
\theoremstyle{definition}
\newtheorem{dfn}[equation]{Definition}
\newtheorem{ntt}[equation]{Notation}
\newtheorem{exmp}[equation]{Example}
\newtheorem*{ntt*}{Notation}
\theoremstyle{remark}
\newtheorem{rmk}[equation]{Remark}
\DeclareMathOperator{\Hom}{Hom}
\DeclareMathOperator{\vp}{\varphi}
\DeclareMathOperator{\PP}{\mathbb P}
\DeclareMathOperator{\Q}{\mathbb Q}
\DeclareMathOperator{\Z}{\mathbb Z}
\DeclareMathOperator{\R}{\mathbb R}
\DeclareMathOperator{\Co}{\mathbb C}
\DeclareMathOperator{\Oh}{\mathcal O}
\DeclareMathOperator{\HH}{\mathbb H}
\DeclareMathOperator{\Lk}{\mathrm Lk}
\DeclareMathOperator{\id}{id}
\DeclareMathOperator{\rk}{rk}
\DeclareMathOperator{\im}{Im}
\DeclareMathOperator{\Sing}{Sing}
\DeclareMathOperator{\Tot}{Tot}
\DeclareMathOperator{\rreg}{reg}
\DeclareMathOperator{\aut}{Aut}
\DeclareMathOperator{\paut}{PAut}
\DeclareMathOperator{\e}{\varepsilon}
\DeclareMathOperator{\dec}{dec}
\newcommand{\Ext}{\mathrm{Ext}}
\newcommand{\reg}[1]{\Gamma_{\mathrm{reg}}(#1)}
\newcommand{\nonzero}[1]{\Gamma(#1)_{0}}
\newcommand{\bm}[1]{H_{#1}^{BM}}
\newcommand{\fr}[1]{H^{#1}_{\mathrm{fr}}}
\newcommand{\naive}[2]{#1_{n#2}}
\title{
Topology of spaces of regular sections and applications to automorphism groups}
\author{Alexey Gorinov}
\address{Faculty of Mathematics, HSE University, 6 Usacheva ulitsa, Moscow 119048, Russia}
\email{agorinov@hse.ru, gorinov@mccme.ru}
\author{Nikolay Konovalov}
\address{\parbox{\linewidth}{Faculty of Mathematics, HSE University, 6 Usacheva ulitsa, Moscow 119048, Russia \\Department of Mathematics, University of Notre Dame, 255 Hurley Hall, Notre Dame, IN 46556, USA}}
\email{nkonoval@nd.edu, nikolay.konovalov.p@gmail.com}
\date{}
\begin{document}

\begin{abstract}
Let $G$ be a complex connected reductive algebraic group that acts on a smooth complex algebraic variety $X$, and let $E$ be a $G$-equivariant algebraic vector bundle over $X$. A section of $E$ is {\it regular} if it is transversal to the zero section. Let $U\subset\Gamma(X,E)$ be the subset of regular sections. We give a sufficient condition in terms of topological invariants of $E$ and $X$ that implies that every orbit map $O\colon G\to U$ induces a surjection in rational cohomology. {Under natural assumptions on $X$ and $E$ this condition is also necessary.}

If {the} condition is satisfied, then (1) the geometric quotient $U/G$ exists; (2) there is an isomorphism $H^*(U,\Q)\cong H^*(G,\Q)\otimes H^*(U/G,\Q)$ of cohomology rings; (3) the order of the stabiliser $G_s,s\in U$ divides a certain expression that can be explicitly calculated e.g.\ if $X$ is a compact homogeneous space. In some cases (e.g.\ if $E$ is a line bundle) we also prove similar statements for the space of the zero loci of $s\in U$. We apply these results to several explicit examples which include hypersurfaces in projective spaces, non-degenerate quadrics and complete flag varieties of the simple Lie groups of rank 2, and also certain Fano varieties of dimension 3 and~4. 
\end{abstract}
\maketitle

\section*{Introduction}

A section of an algebraic vector bundle $E$ over a smooth complex algebraic variety $X$ is called {\it regular} if it is transversal to the zero section. The zero locus of a regular section is a smooth subvariety of $X$.
In this paper we prove several results about the topology of the space $\reg{X,E}$ of regular sections. More specifically, let $G$ be a complex affine group that acts algebraically on $X$, and suppose that this action lifts to an algebraic action of $G$ on the total space of $E$ by fibrewise linear automorphisms. In this case we say that $E$ is {\it $G$-equivariant}.

The prototype for our results is the following theorem of C.\ Peters and J.\ Steenbrink \cite{PS03}: if $G=GL_{n+1}(\Co), X=\PP^n(\Co), E=\Oh(d),d\geq 3$, then there is an isomorphism
\begin{equation}\label{prototype}
H^*(\reg{X,E},\Q)\cong H^*(G,\Q)\otimes H^*(\reg{X,E}/G,\Q).
\end{equation}
A natural question is whether this generalises to other equivariant bundles. We give a necessary and sufficient condition for this to be true in terms of topological invariants of $E$ and $X$, provided $X$ is compact, the jet bundle $E'=J(\Oh_{\PP(E^*)}(1))$ is globally generated by holonomic sections (see Section~{\ref{secsmoothsec}}), and $c_{\dim \PP(E^*)}(E')\neq 0$. 
This condition (see Corollary~{\ref{maincorappl}} and Theorem~\ref{B} below) can be explicitly checked e.g.\ if $X=G/P, P$ a parabolic subgroup. We give a few examples in Theorem \ref{mainthmprojhyp}. When $G=GL_{n+1}(\Co),X=\PP^n(\Co), E=\Oh(d)$, the condition translates simply as $d\not\in\{0,1,2\}$.

%
One motivation for results of this type is deducing the cohomology of the quotient $\reg{X,E}/G$ from the cohomology of $\reg{X,E}$, or vice versa. Of these two spaces, $\reg{X,E}$ is usually easier to deal with; in particular, its cohomology can be calculated using conical resolutions, see~\cite{Vas99}, \cite{Gor05} and \cite{Tom05}. On the other hand, the quotient $\reg{X,E}/G$ is often more interesting than $\reg{X,E}$ because it is the moduli space of objects of some kind; for example, the moduli spaces of smooth curves of small genus canonically decompose into pieces of the form $\reg{X,E}/G$ for appropriate $X,E$ and $G$, see \cite{Tom05} and Remarks~\ref{m5} and \ref{m8}, where we give a few simple applications to moduli spaces.

As in \cite{PS03}, in order to prove \ref{prototype} we construct certain cohomology classes $\Lk(y),y\in H_*(X)$ of $\reg{X,E}$, which we will call {\it {linking classes}} (see Definition~\ref{linkingclasssing}), and calculate the pullbacks of these classes under an orbit map $O\colon G\to\reg{X,E}$ (Theorem \ref{maintheorem}). If it so happens that {in rational cohomology the pullbacks} multiplicatively generate $H^*(G,\Q)$, then \ref{prototype} follows, essentially by the Leray-Hirsch principle, see Theorem \ref{thmquotslice}. We conjecture that for line bundles the ``divisibility phenomenon'' \ref{prototype} is generic in $E$ for a given $X=G/P$; see Section \ref{genconj} for a precise statement.

The key step of this strategy is the first one, namely constructing the linking classes and calculating their pullbacks under the orbit maps. The geometric construction of these classes given in~{\cite{PS03}} for $X=\PP^n(\Co), E=\Oh(d)$ admits a straightforward generalisation to the case of an arbitrary equivariant bundle. This generalisation is however difficult to work with in practice, so instead we take a different approach, which seems better suited for our purposes. We check however that if $X=\PP^n(\Co), E=\Oh(d)$, then our construction and the one by C.~Peters and J.~Steenbrink in~{\cite{PS03}} give the same answer, see Proposition~{\ref{geometricdescription2}}. 

\smallskip

We will now explain these results in more detail assuming for simplicity that $X$ is compact, $G$ is reductive and connected, and $E$ is a line bundle, which we denote $L$. 
Set $d=\dim_{\Co} X$. Recall that the {\it first jet bundle} $E'=J(L)$ of $L$ has rank $d+1$ and there is a map $j\colon \Gamma(X,L)\to \Gamma(X,E')$ such that $s\in\reg{X,L}$ if and only if $j(s)$ does not vanish anywhere. The Euler class of $E'$ is zero for dimension reasons, so there exists a secondary Thom class $a_{E'}\in H^{2d+1}(\Tot_0(E'),\Z)$ where $\Tot_0(E')$ is the total space of $E'$ minus the zero section (see Section \ref{secthomeuler}). For $y\in H_*(X,\Z)$ we then set $\Lk(y)=-j_{ev}^*(a_{E'})/y$ where ${j_{ev}}\colon\reg{X,L}\times X\to \Tot_0(E')$ is the jet evaluation map defined by ${j_{ev}}(s,x)=j(s)(x)$, and $/$ denotes the slant product $$H^p(\reg{X,L}\times X,\Z)\otimes H_q(X,\Z)\to H^{p-q}(\reg{X,L},\Z).$$

\begin{varthm}
\label{A}
The equivariant Euler class $e_G(E')\in H^*_G(X,\Q)$ (see Definition~\ref{equivarianteulerclass}) has a decomposition
$$e_G(E')=\sum_i \beta^*(a_i)b_i$$
where $a_i\in \widetilde{H}^*(BG,\Q), b_i\in H^*_G(X,\Q),$ and $\beta^*\colon H^*(BG,\Q) \to H_G^*(X,\Q)$ is the structure map. Moreover, modulo decomposable elements we have
\begin{equation}\label{orbit_intro}
O^*(\Lk(y)) = \sum \langle \alpha^* (b_i), y\rangle \bar\gamma(a_i) \in H^*(G,\Q).
\end{equation}

Here $\alpha^*\colon H^*_G(X,\Q) \to H^*(X,\Q)$ is the restriction map, and $\bar\gamma$ is the composite $$H^*(BG,\Q)\to H^*(\Sigma G,\Q)\to H^{*-1}(G,\Q)$$ where the first arrow is induced by the map $\gamma\colon\Sigma G \to BG$ obtained from the canonical homotopy equivalence $G\simeq \Omega BG$ by the suspension-loop space adjunction.
\end{varthm}
We note that the right hand side of equation \ref{orbit_intro} is integral, because the left hand side is, but the individual ingredients of the right hand side may not be.

As a consequence of Theorem~\ref{A} we have the following necessary and sufficient condition for the existence 
of an isomorphism~\ref{prototype}.
We keep the notation of Theorem~\ref{A}.

\begin{varthm}
\label{B}
Suppose that $\reg{X,L}$ is the complement of a divisor in $\Gamma(X,L)$, and $E'$ is globally generated by holonomic sections (see Section~\ref{secsmoothsec}). Let $\reg{X,L}/G$ be the good categorical quotient. Then the subring 
$$\Lambda^*_{E'}=\left.\left\langle \sum_i \langle \alpha^*(b_i), y\rangle \bar\gamma(a_i) \;\; \right\vert\;\; y\in H_*(X,\Z)\right\rangle \subset H^*(G,\Q)$$
is integral, i.e.\ it is a subring of $\fr{*}(G,\Z)=H^*(G,\Z)/\mathrm{torsion}$, and
the following statements are equivalent:
\begin{enumerate}
\item The ring $\Lambda^*_{E'}$ has finite index in $\fr{*}(G,\Z)$.
\item There is a mixed Hodge substructure $\subset H^*(\reg{X,L},\Q)$ that maps isomorphically to $H^*(G,\Q)$ under $O^*$ for every orbit map $O:G\to\reg{X,L}$.
\item For $E=L$ there exists an isomorphism~\ref{prototype} that is a map of mixed Hodge structures, rings and $H^*(\reg{X,L}/G,\Q)$-modules.
\end{enumerate}
Moreover, if either of these statements holds, then the $G$-stabiliser of every $s\in\reg{X,L}$ is finite, which in turn implies that the categorical quotient $\reg{X,L}/G$ is in fact geometric.
\end{varthm}
We note that the (geometric or just categorical) quotient $\reg{X,L}/G$ is necessarily affine. Theorems~\ref{A} and~\ref{B} follow from Corollary~\ref{maincorollary2} and Theorem~\ref{mainthmappl} respectively. Both theorems can be easily extended to vector bundles of arbitrary rank, see Theorem~\ref{maintheorem} and Corollary~\ref{maincorappl}. To do this we use the Cayley trick, which consists in replacing a vector bundle $E$ on $X$ with the line bundle $\Oh_{\PP(E^*)}(1)$ on $\PP(E^*)$, see Section~\ref{secsmoothsec}. 

\smallskip

The {linking} classes $\Lk(y)$ are defined over $\Z$, and in Theorem~\ref{A} we calculate their pullbacks under~$O$ in the integral cohomology of $G$ modulo torsion. As a consequence, we are able to obtain bounds on the orders of the $G$-automorphism groups of elements of $\reg{X,L}$, as well as several related automorphism groups. Here is an example. 

\begin{varthm}
\label{C}
We keep the notation and assumptions of Theorem B, and additionally assume that $G$ is semi-simple. Suppose that the index $[H^{\mathrm{top}}(G,\Z):\Lambda^{\mathrm{top}}_{E'}]$ is finite where $\mathrm{top}=\dim_{\Co} G$ is the top degree in which the cohomology of $G$ is non-zero. Then for every $s\in \reg{X,L}$
\begin{enumerate}
\item the order of the $G$-stabiliser of $s$ divides $[H^{\mathrm{top}}(G,\Z):\Lambda^{\mathrm{top}}_{E'}]$;
\item the order of the setwise $G$-stabiliser of the zero locus of $s$ divides 
\begin{equation}\label{auto_zero_loc_line_bund}
[H^{\mathrm{top}}(G,\Z):\Lambda^{\mathrm{top}}_{E'}]\cdot \langle c_d(E'),[X]\rangle.
\end{equation}
\end{enumerate}
\end{varthm}

We note that $\langle c_d(E'),[X]\rangle$ is often the degree of the discriminant hypersurface $\Gamma(X,L)\setminus\reg{X,L}$, see Proposition~\ref{degreediscr} for details. The first part of Theorem~\ref{C} follows from Theorem~\ref{mainthmappl} and the second part from Corollaries~\ref{maincorappl} and \ref{descentc*2ndrev} and Lemmas~\ref{automorphismsofzeros} and \ref{linebundletrivial}. Same as Theorems~\ref{A} and~\ref{B}, the first part of Theorem~\ref{C} generalises to higher rank bundles using the Cayley trick (see Corollary~\ref{maincorappl}). So does the second part under the additional assumption that the fibrewise automorphism group of the vector bundle acts transitively on the regular sections with given zero locus; see Section~\ref{sectionsvsloci}, where we give a sufficient condition for this to be true and an example which shows that it is not true in general.

If this assumption is true however, then the higher rank analogue of part~2 of Theorem~\ref{C} follows from Lemma~\ref{automorphismsofzeros} and Corollary~\ref{maincorappl}. We give a few examples in Section~\ref{fano}. In practice, we also need the reductive part of the quotient of the fibrewise automorphism group by the scalar automorphisms to be semi-simple: otherwise the analogue of~\ref{auto_zero_loc_line_bund} for the group $\widetilde{G}$ (see Section~\ref{sectionsvsloci}) will be infinite.


As an illustration of Theorem~\ref{C}, the order of the projective automorphism group of every smooth projective hypersurface of degree $d$ divides
\begin{equation}\label{prototype2}
(d-1)^n \prod_{i=2}^{n+1} ((d-1)^{n+1}+(-1)^{i+1}(d-1)^{n+1-i}).
\end{equation}


More examples of this type are given in Theorem \ref{mainthmprojhyp}; in Corollary \ref{maincorappl} and Sections \ref{genalg} and \ref{completeflagvarieties} we describe a general procedure for obtaining similar formulas for equivariant line bundles over an arbitrary homogeneous space $G/P$. We note that V.~Gonz\'alez-Aguilera and A.~Liendo show in~\cite{GAL13} that the primes that occur in \ref{prototype2} are precisely the primes that can be realised as the order of an automorphism of some smooth projective hypersurface of degree $d$. We conjecture that for a line bundle over $G/P$ expression~\ref{auto_zero_loc_line_bund} (provided it is finite and non-zero) contains all primes that occur as the orders of $G$-automorphisms of the zero loci of regular sections, see Section \ref{parsimconj}.

\smallskip

The paper is organised as follows. In Section~\ref{secthomeuler} we introduce certain characteristic classes that we call the {\it secondary} Thom and equivariant Euler classes. The reason they are called secondary is that they are defined for oriented vector bundles whose ordinary Euler class vanishes. {The main example we will be interested in is the first jet bundle of a line bundle.} 

In Section~\ref{maps} we define the map~$S$ (Definition~\ref{mainhomomorphism}), our main technical ingredient, and prove some of its properties. The key results in Section~\ref{1stsec} are Propositions~\ref{secthomseceuler}, \ref{mainproposition} and Remark~\ref{rmkSS1}, which is the main tool for calculating $S$ in the following sections. Proposition {\ref{transgression}} {relates} the Euler class of an oriented vector bundle {to} the transgression in the Leray spectral sequence of the corresponding spherical bundle {(see also Remark~{\ref{rmktransgression}})}.

Section~{\ref{sectdiscrim}} contains our main technical results: we construct the {\it linking class homomorphism} for spaces of nowhere vanishing sections {($\Lk_V^{\Sigma}$, Definition~{\ref{linkingclasshomomorphism}}) and for spaces of regular sections ($\Lk_V^{\Sing}$, Definition~{\ref{linkingclasssing}}). For $X$ compact algebraic the image of $\Lk_V^{\Sing}$ is precisely the first non-trivial term of the weight filtration on $H^*(\reg{X,E},\Q)$, see Proposition~{\ref{mhsmotivation2}}. We express the composition of $\Lk_V^{\Sigma}$ and $\Lk_V^{\Sing}$} with $O^*$ in Corollary~\ref{prepmaincorollary} and Theorem~\ref{maintheorem} in terms of~$S$ applied to the equivariant Euler class of $J(\Oh_{\PP (E^*)}(1))$.

In Section \ref{sectapplications} we explain how the results of the preceding sections can be used to prove analogues of isomorphism \ref{prototype} and to obtain information about the orders of automorphism groups (Theorem \ref{thmquotslice} and Proposition \ref{stab}). {Proposition~{\ref{degreediscr}} relates the degree of a variety ``swept out'' by vector subspaces to the integral of the Euler class of a certain vector bundle (see also Remark~{\ref{rmkdegreediscr}}). The main results of Section~{\ref{sectapplications}} are summarised in Theorem~{\ref{mainthmappl}} and Corollary~{\ref{maincorappl}}.}

In Section \ref{genalg} we give a general recipe to calculate the linking classes explicitly for compact homogeneous spaces $G/P$ where $G$ is a connected affine group and $P$ is a parabolic subgroup ({at least when $G$ is of classical type}, see the beginning of Section~\ref{genidealbargamma}). We then apply this recipe to hypersurfaces in projective spaces, non-degenerate quadrics and the complete flag variety of $SL_3(\Co)$, and to certain Fano varieties of dimension 3 and 4. {These applications are summarised in Theorem~{\ref{mainthmprojhyp}}; the list is not exhaustive, and the purpose of Sections {\ref{projquad}}-{\ref{fano}} is to illustrate the kind of problems that can be handled using our methods, {and also to show how the calculations work in concrete examples}. In Remarks {\ref{pointsp1}}-{\ref{knownlcm}} we 
compare Theorem~{\ref{mainthmprojhyp}} with known results about automorphism groups.}

Finally, in Section \ref{secconjectures} we propose a few conjectures and discuss some evidence in favour of these.

There are two appendices in the paper. In Appendix \ref{app_proofs} we prove Propositions \ref{transgression} and \ref{degreediscr}. {These results are not needed for our main applications (at least not in full generality), but we believe they are of independent interest.} {In Appendix~{\ref{app_tables}} we give numerical examples for our results in Section~\ref{secexamples} on common multiples of the orders of the automorphism groups.}

The proofs of our results in Sections \ref{1stsec} and \ref{sectdiscrim} 
{are mostly homotopy theoretic}, so it seems likely that these results can be extended in two diverging directions, one being spaces of regular $C^\infty$-smooth sections of real equivariant vector bundles, and the other equivariant vector bundles in finite characteristic. In this paper we focus on those constructions that translate directly into
both these situations, and give a first series of applications to complex algebraic varieties.

{\bf Acknowledgments.} We are grateful to {Sasha Berdnikov, }Misha Finkelberg, Sasha Kuznetsov, Lyonya Rybnikov {and Kostya Shramov} for helpful conversations.

{\bf Notation.} Here we describe some notation that will be used throughout the paper. If $E$ is a vector bundle over a topological space $X$, we denote the total space of $E$ by $\Tot(E)$ and the total space minus the zero section by $\Tot_0(E)$. The pullback of $E$ along $f\colon Y\to X$ is denoted $f^*(E)$. A one point space will be denoted $pt$. For a based space $X$ we use $\Omega X,\Sigma X$ and $X_+$ to denote the loop space, the (reduced) suspension and the space $X\sqcup pt$ respectively; the base point of $X_+$ is the added point. If two spaces $X,Y$ are homotopy equivalent, we will write $X\simeq Y$.

If $X$ is a topological space, we denote the ring $H^*(X,\Z)/\rm{torsion}$ by $\fr{*}(X,\Z)$. We will often view $\fr{*}(X,\Z)$ as a subring of $H^*(X,\Q)$. If $R$ a commutative ring with identity, then we use $\langle -,-\rangle$ to denote the evaluation pairing $H^n(X,R)\otimes_R H_n(X,R)\to R$. The {\it Borel-Moore homology groups} $\bm{*}(X,R)$ of $X$ with coefficients in $R$ are the homology groups of the complex of locally finite singular chains with coefficients in $R$. If $X$ is a finite CW-complex and $Y\subset X$ is a subcomplex, then $\bm{*}(X\setminus Y,R)\cong H_*(X,Y,R)$; if moreover $R$ is a field, then $\bm{*}(X\setminus Y,R)\cong\Hom_R (H^*_c(X\setminus Y,R),R)$.

Suppose $X,Y$ are topological spaces; let $p_1\colon X\times Y\to X$ and $p_2\colon X\times Y\to Y$ be the projections. If $E_1$ and $E_2$ are vector bundles over $X$ and $Y$ respectively, then $E_1\boxtimes E_2$ denotes the exterior product of $E_1$ and $E_2$, i.e.\ the vector bundle $p_1^*(E_1)\otimes p_2^*(E_2)$. Similarly, if $x_1\in H^m(X,R),x_2\in H^n(X,R)$, then $x_1\times x_2$ denotes the class $p_1^*(x_1)\smile p_2^*(x_2)$. We will often omit $\smile$ when writing the cup products of cohomology classes.

Suppose a group $G$ acts on a set $X$. We will denote the result of applying $g\in G$ to $x\in X$ by $g\cdot x$ or $gx$, respectively $x\cdot g$ or $xg$ if the action is on the left, respectively on the right. All our group actions will be left group actions unless specified otherwise. We will use $G_x$, respectively $G(x)$ to denote the stabiliser of $x\in X$, respectively the orbit of $x$. The {\it orbit map} $G\to X$ given by $g\mapsto g\cdot x$ will be denoted $O_x$, or $O$ when it is clear or irrelevant which $x$ we take.

By the dimension of a complex analytic variety (in particular, a complex affine group) we will always mean the complex dimension. Similarly, the rank of a complex vector bundle will be understood to mean the complex rank.

Let $E$ be a holomorphic vector bundle over a complex analytic variety $X$. The space of holomorphic sections of $E$ is denoted $\Gamma(X,E)$. If $X$ is irreducible, then the {\it fundamental class} of $X$ is an element of $\bm{2\dim X}(X,\Z)$; we will denote this element by $[X]$.

\section{Secondary characteristic classes}\label{1stsec}
Suppose $E\to X$ is a real oriented vector bundle with vanishing Euler class. In Section~\ref{secthomeuler} we define a {\it secondary Thom class} $a_E\in H^{\rk E-1}(\Tot E_0)$ (Definition~\ref{secthom}) and prove some of its basic properties. If in addition $G$ is a topological group acting on $X$ and $E$ is $G$-equivariant, then we define a {\it secondary $G$-equivariant Euler class} $se_G(E)\in H^{\rk E-1}(G\times X)$ of $E$ (Definition~\ref{seceqeuler}). Neither secondary class is uniquely defined; in particular, the secondary Euler class is defined up to the {\it indeterminacy submodule} $M_X^*$ (Definition~\ref{indeterminacy submodule}). In Proposition~\ref{secthomseceuler} we compare the secondary Thom and Euler classes using Proposition~\ref{transgression}, which expresses the Euler class of an oriented vector bundle in terms of the transgression in the Leray-Serre spectral sequence of the spherisation. We also define the {\it naive homotopy quotient} $X_{nG}$ (Definition~\ref{nhq}) as the homotopy colimit of a certain diagram, which gives us a long exact sequence (sequence~\ref{lesnhq}) that we will need in the sequel.

In Section~\ref{maps} we introduce our main technical ingredients: the ideals $I(X,R)$ and $I_1(X,R)$ of $H^*(X_{hG},R)$ (Notations~\ref{mainidealeq} and \ref{subideal1}) and the map $S\colon I^*(X,R) \to H^{*-1}(G\times X,R)/M_X^{*-1}$ (Definition~\ref{mainhomomorphism}). The results that will enable us to calculate $S$ in the upcoming sections are Proposition~\ref{mainproposition} and Remark~\ref{rmkSS1}. Finally, in Section~\ref{secsmoothproj} we describe a few simplifications assuming $X$ smooth compact complex algebraic. In particular, it turns out that in this case the ideals $I(X,\Q)$ and $I_1(X,\Q)$ coincide (Lemma~\ref{s1sproj}).

\subsection{Secondary Thom and Euler classes}\label{secthomeuler}
In this subsection $X$ will be a topological space, $E$ will be a real vector bundle over $X$ of rank $r$, and $R$ will denote a commutative ring with identity. Let $p_0\colon \Tot_0(E) \to X$ be the projection map, and let $E_x$ be the fibre of $E$ over $x \in X$. We set $E_{0,x}=E_x\setminus\{0\}$.

Recall that $E$ is {\it $R$-orientable} if there is a {\it Thom class} $u_E\in H^r(\Tot(E),\Tot_0(E),R)$ such that $u_E$ restricted to the couple $(E_x,E_{0,x})$ is an $R$-generator of $H^r(E_x,E_{0,x},R)\cong R$ for every $x\in X$. Once we have chosen a Thom class $u_E$ we will say that $E$ is {\it $R$-oriented} and call $u_E$ an {\it $R$-orientation} of $E$. If $E$ is $R$-oriented, then the {\it Euler class} $e(E)$ is the pullback of $u_E$ under the zero section map $(X,\varnothing)\to (\Tot(E),\Tot_0(E))$.

Note that if $\mathop{\mathrm{char}}R=2$, then all vector bundles are $R$-orientable, and otherwise a vector bundle is $R$-orientable if and only if it is $\Z$-orientable.

\begin{dfn}
\label{secthom}
A cohomology class $a_E \in H^{r-1}(\Tot_0(E), R)$ is called {\it a secondary Thom class of $E$} if and only if its restriction to each fibre $E_{0,x}$ is a generator of the $R$-module $H^{r-1}(E_{0,x}, R)\cong H^{r-1}(S^{r-1},R)$.
\end{dfn}
\begin{prop}
\label{existencesecthom}
 A secondary Thom class of a vector bundle $E$ exists if and only if $E$ is $R$-orientable and the Euler class $e(E)$ is zero.
\end{prop}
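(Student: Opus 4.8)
The plan is to relate the secondary Thom class $a_E\in H^{r-1}(\Tot_0(E),R)$ to an ordinary Thom class of $E$ through the long exact sequence of the pair $(\Tot(E),\Tot_0(E))$, and then to read off both $R$-orientability and the vanishing of $e(E)$ directly from that sequence. The starting point is that the projection $p\colon\Tot(E)\to X$ is a homotopy equivalence, with homotopy inverse the zero section, so $H^*(\Tot(E);R)\cong H^*(X;R)$. Substituting this into the long exact sequence of the pair yields
$$H^{r-1}(X;R)\xrightarrow{\,p_0^*\,}H^{r-1}(\Tot_0(E);R)\xrightarrow{\,\delta\,}H^{r}(\Tot(E),\Tot_0(E);R)\xrightarrow{\,q\,}H^{r}(X;R),$$
where $q$ is the composite $H^{r}(\Tot(E),\Tot_0(E);R)\to H^{r}(\Tot(E);R)\xrightarrow{\cong}H^{r}(X;R)$. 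I will import two standard facts: first, that $E$ is $R$-orientable if and only if there is a class $u\in H^{r}(\Tot(E),\Tot_0(E);R)$ restricting to a generator of $H^{r}(E_x,\Tot_0(E_x);R)\cong R$ on each fibre pair, such a $u$ being a Thom class; and second, that for such $u$ one has $q(u)=e(E)$, i.e.\ the Euler class is the restriction of the Thom class to the zero section.

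The bridge between the two notions of \emph{generator} is the fibrewise connecting homomorphism. For a fixed $x$ the pair $(E_x,\Tot_0(E_x))$ is $(\R^r,\R^r\setminus\{0\})$, and since $\R^r$ is contractible the connecting map $\delta_x\colon H^{r-1}(\Tot_0(E_x);R)\to H^{r}(E_x,\Tot_0(E_x);R)$ is an isomorphism (here and throughout I take $r\ge 2$, which is the case relevant to the applications, the rank-one case being degenerate since $S^0$ is disconnected). By naturality of $\delta$ with respect to the inclusion of a fibre, the restriction of $\delta(a)$ to a fibre pair equals $\delta_x$ applied to the restriction of $a$ to $\Tot_0(E_x)$, for any $a\in H^{r-1}(\Tot_0(E);R)$. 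Hence $a$ restricts to a generator on every fibre if and only if $\delta(a)$ restricts to a generator on every fibre pair.

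With these preliminaries in place the proof splits into two short implications. If $E$ is $R$-orientable with $e(E)=0$, I choose a Thom class $u$; then $q(u)=e(E)=0$, so by exactness $u=\delta(a_E)$ for some $a_E$, and the fibrewise equivalence above shows $a_E$ is a secondary Thom class. Conversely, given a secondary Thom class $a_E$, I set $u:=\delta(a_E)$; the same equivalence shows $u$ restricts to a generator on each fibre pair, hence $u$ is a Thom class and $E$ is $R$-orientable, while $e(E)=q(u)=q(\delta(a_E))=0$ because $q\circ\delta=0$ by exactness.

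I expect the only real friction to be bookkeeping rather than mathematics: one must check that the two occurrences of \emph{generator}—for $H^{r-1}(S^{r-1};R)$ on the one hand and for the local Thom module $H^{r}(E_x,\Tot_0(E_x);R)$ on the other—are correctly matched by the isomorphism $\delta_x$, and that naturality of the connecting homomorphism is applied to the commutative square attached to the inclusion of a single fibre. The characterization of $R$-orientability through the existence of a Thom class and the identity $q(u)=e(E)$ are standard and will be quoted rather than reproved, and the low-rank cases $r\le 1$ fall outside the intended meaning of \emph{generator} and may be excluded.
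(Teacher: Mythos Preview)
Your proof is correct and follows essentially the same route as the paper: both use the long exact sequence of the pair $(\Tot(E),\Tot_0(E))$, identify the fibrewise connecting map $\delta_x$ as an isomorphism via naturality (the paper's diagram~\ref{cdrest}), and use this to pass between secondary Thom classes and Thom classes, reading off $e(E)=0$ from exactness. Your explicit caveat about $r\le 1$ is a reasonable technical footnote that the paper leaves implicit.
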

\begin{proof}
Consider the exact sequence:
\begin{equation}
\label{lessecthom}
H^*(\Tot(E), R) \xrightarrow{i^*}  H^*(\Tot_0(E), R) \xrightarrow{\delta} H^{*+1}(\Tot(E),\Tot_0(E), R) \xrightarrow{j^*}  H^{*+1}(\Tot(E), R). 
\end{equation}
Here $i$ and $j$ are the inclusions $\Tot_0(E)\to \Tot(E)$ and $(\Tot(E), \varnothing)\to (\Tot(E),\Tot_0(E))$ respectively.
Exact sequence~\ref{lessecthom} is functorial with respect to fibrewise isomorphisms of vector bundles. \par
Suppose that a secondary Thom class $a_E \in H^{r-1}(\Tot_0(E), R)$ exists. Then $\delta(a_E)$ is a Thom class of the vector bundle $E$, which can be checked by restricting $\delta(a_E)$ to the fibre over an $x\in X$. 
So $E$ is $R$-orientable and the Euler class of $E$ is $j^*(\delta(a_E))=0$. \par
Now suppose that $E$ is $R$-orientable and the Euler class of $E$ is zero. Then there exists a Thom class $u_E \in H^{r}(\Tot(E),\Tot_0(E),R)$ and $j^*(u_E)=0$. So there is an $a_E\in H^{r-1}(\Tot_0(E),R)$ such that $\delta(a_E)=u_E$. Using \ref{lessecthom} again and restricting to the fibre over $x\in X$ 
we see that $a_E$ is a secondary Thom class. 
\end{proof}

\begin{prop}
\label{functorsecthom}
Let $f\colon Y\to X$ be a continuous map, and let $E'$ and $E$ be vector bundles of real rank $r$ over $Y$ and $X$ respectively. We suppose that $f$ is covered by a map $F\colon \Tot(E')\to \Tot(E)$ which is a linear isomorphism when restricted to each fibre. Suppose $a_E \in H^{r-1}(\Tot_0(E),R)$ is a secondary Thom class of the vector bundle $E$. Then $F^*(a_E)$ is a secondary Thom class of the vector bundle $E'$. 
\end{prop}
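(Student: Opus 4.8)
The plan is to reduce the assertion to the fibrewise situation, in the same spirit as the proof of Proposition~\ref{existencesecthom}: a secondary Thom class is characterized by its restrictions to the punctured fibres, so it suffices to control $F$ one fibre at a time.

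First I would check that $F$ restricts to a map $F_0\colon \Tot_0(E')\to \Tot_0(E)$. For each $y\in Y$ the restriction $F|_{E'_y}\colon E'_y\to E_{f(y)}$ is a linear isomorphism, so it sends the zero vector to the zero vector and every nonzero vector to a nonzero vector. Hence $F$ maps the complement of the zero section to the complement of the zero section, and the class $F^*(a_E)$ in the statement is to be understood as $F_0^*(a_E)\in H^{r-1}(\Tot_0(E'),R)$.

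Next, fix $y\in Y$ and set $x=f(y)$. Writing $\iota_y\colon \Tot_0(E'_y)\hookrightarrow \Tot_0(E')$ and $\iota_x\colon \Tot_0(E_x)\hookrightarrow \Tot_0(E)$ for the fibre inclusions, the map $F_0$ sits in the commutative square
\begin{equation*}
\begin{tikzcd}
\Tot_0(E'_y) \arrow{r}{F_y} \arrow{d}{\iota_y} & \Tot_0(E_x) \arrow{d}{\iota_x} \\
\Tot_0(E') \arrow{r}{F_0} & \Tot_0(E),
\end{tikzcd}
\end{equation*}
where $F_y$ denotes the restriction of the linear isomorphism $E'_y\xrightarrow{\ \sim\ } E_x$ to the complements of the origins. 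Applying $H^{r-1}(-,R)$ gives the identity $\iota_y^*\circ F_0^* = F_y^*\circ\iota_x^*$, and by Definition~\ref{secthom} the class $\iota_x^*(a_E)$ is a generator of $H^{r-1}(\Tot_0(E_x),R)$.

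Finally I would observe that $F_y$ is a homeomorphism: after trivializing the two fibres it is the restriction to $\R^r\setminus\{0\}$ of a linear automorphism of $\R^r$. Therefore $F_y^*$ is an isomorphism and carries generators to generators, so $\iota_y^*\bigl(F_0^*(a_E)\bigr)=F_y^*\bigl(\iota_x^*(a_E)\bigr)$ generates $H^{r-1}(\Tot_0(E'_y),R)$ for every $y\in Y$. By Definition~\ref{secthom} this is precisely the condition for $F_0^*(a_E)$ to be a secondary Thom class of $E'$. I do not anticipate a genuine obstacle here; the only points demanding care are the verification that $F$ really restricts to the punctured total spaces and that each fibrewise map is a homeomorphism, which together guarantee that generators are sent to generators.
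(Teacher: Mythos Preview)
Your proof is correct and follows essentially the same approach as the paper's: both verify the defining fibrewise condition of Definition~\ref{secthom} by using that $F$ restricts to a linear isomorphism (hence a homeomorphism) on each punctured fibre, so that the restriction of $F^*(a_E)$ to $\Tot_0(E'_y)$ is identified with the restriction of $a_E$ to $\Tot_0(E_{f(y)})$. Your version is simply more explicit about the commutative square and the well-definedness of $F_0$, which the paper's proof leaves implicit.
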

\begin{proof}
This follows from the functoriality property of the Thom classes and sequence~\ref{lessecthom}.
\end{proof}
\begin{dfn}
\label{orientedsecthom}
Let $E$ be an $R$-oriented vector bundle with the Thom class $u_E \in H^{r}(\Tot(E),\Tot_0(E),R)$. A~secondary Thom class $a_E$ is called \it{$R$-oriented }\rm if and only if $\delta(a_E)=u_E$. Here $\delta$ is the homomorphism from~\ref{lessecthom}.
\end{dfn}
\begin{prop}
\label{indsecthom}
An $R$-oriented secondary Thom class  $a_E \in H^{r-1}(\Tot_0(E),R)$ is uniquely defined up to the group $\im(p_0^*)$. I.e., if $a_E$ and $a'_E$ are two $R$-oriented secondary Thom classes, then there exists a class $b \in H^{r-1}(X,R)$ such that $a_E=a'_E +p_0^*(b)$. 
\end{prop}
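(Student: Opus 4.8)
The plan is to read the statement off the long exact sequence~\ref{lessecthom} together with the homotopy equivalence between $\Tot(E)$ and its zero section. First I would observe that the two oriented secondary Thom classes have the same image under $\delta$: by Definition~\ref{orientedsecthom} we have $\delta(a_E)=u_E=\delta(a'_E)$, and hence $\delta(a_E-a'_E)=0$. Thus the difference $a_E-a'_E$ lies in $\ker\delta$, which by exactness of~\ref{lessecthom} at the term $H^{r-1}(\Tot_0(E),R)$ coincides with $\im(i^*)$. So there exists a class $c\in H^{r-1}(\Tot(E),R)$ with $i^*(c)=a_E-a'_E$.

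The second and only substantive step is to identify $\im(i^*)$ with $\im(p_0^*)$. For this I would use that the zero section $z\colon X\to \Tot(E)$ is a homotopy equivalence whose homotopy inverse is the projection $p\colon \Tot(E)\to X$; consequently $p^*\colon H^*(X,R)\to H^*(\Tot(E),R)$ is an isomorphism. Since the restriction of $p$ to $\Tot_0(E)$ is exactly $p_0$, we have $p_0=p\circ i$ and therefore $p_0^*=i^*\circ p^*$. Because $p^*$ is surjective, this gives $\im(p_0^*)=i^*(\im p^*)=i^*\bigl(H^*(\Tot(E),R)\bigr)=\im(i^*)$. Combining this with the first paragraph, we conclude $a_E-a'_E\in\im(p_0^*)$, so there is a class $b\in H^{r-1}(X,R)$ with $a_E=a'_E+p_0^*(b)$, which is the assertion.

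I do not expect a genuine obstacle here, as the argument is a formal diagram chase; the only point requiring a little care is the identification $\im(i^*)=\im(p_0^*)$. One must note that it is the equality of \emph{images}, not of the maps themselves, that is being used, and that it rests precisely on $p^*$ being an isomorphism (equivalently, on $\Tot(E)$ deformation retracting onto $X$ along the zero section). It is also worth remarking that the class $b$ need not be unique: it is determined only modulo $\ker(p_0^*)$. This is harmless, since the statement asserts uniqueness of $a_E$ merely up to the subgroup $\im(p_0^*)$, and that subgroup is canonically defined.
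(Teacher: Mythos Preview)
Your proof is correct and follows essentially the same route as the paper: use Definition~\ref{orientedsecthom} to see $a_E-a'_E\in\ker\delta=\im(i^*)$ via exact sequence~\ref{lessecthom}, then identify $\im(i^*)=\im(p_0^*)$. You spell out the last identification (via $p_0=p\circ i$ and $p^*$ being an isomorphism) more explicitly than the paper, which simply asserts it.
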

\begin{proof}
Using~\ref{lessecthom} we see that if $a_E$ and $a'_E$ are two $R$-oriented secondary Thom classes, then $a_E-a'_E \in \im(i^*)$; the latter group coincides with the image of $p_0^*$.
\end{proof}

\begin{prop}\label{transgression}
Suppose $X$ is path connected, $E$ is $R$-oriented and $r=\rk E>1$. Let $(E_*^{*,*})$ be the Leray-Serre spectral sequence of $p_0:\Tot_0(E)\to X$ for the cohomology with coefficients in $R$. Let $x_0\in X$ be a point. The group $E_2^{0,r-1}\cong H^{r-1}(E_{0,x_0},R)\cong R$ has a preferred generator $\mathbf{a}$, namely the one that corresponds to the generator of $H^{r-1}(E_{0,x_0},R)\cong H^r(E_{x_0},E_{0,x_0},R)$ obtained by restricting the Thom class of $E$. Then the transgression 
\begin{equation}\label{formula_transgression}
d_r(\mathbf{a})=-e(E)\in H^r(X,R)\cong E_r^{r,0}.
\end{equation}
\end{prop}

\begin{rmk}\label{rmktransgression}
{This result is very likely classical, but we were unable to locate it in the literature. For all our applications (e.g.\ Corollaries~{\ref{maincorappl}} and {\ref{stabilizerisproduct}}) it would suffice to know that $d_r(\mathbf{a})$ and $e(E)$ generate the same cyclic subgroup, which is much easier to prove than formula~\ref{formula_transgression}. This formula, however, allows us to make the statements of our key results (such as Corollary~{\ref{prepmaincorollary}} and Theorem~{\ref{maintheorem}}) more precise, and we also believe it may come in useful in other examples than those that we consider in this paper, so we prove Proposition~{\ref{transgression}} in Appendix~{\ref{app_proofs}}}.
\end{rmk}

\begin{rmk}\label{consistency}
{Both the differential $d_r$ and the preferred generator $\mathbf{a}$ depend on the differential $\delta$ on singular cochains. For definiteness, here and elsewhere below we take $\delta$ to be the dual of the differential on singular chains }(i.e.\ {no extra sign), cf.~{\cite{Switzer75}}, {\cite{Sp81}} and {\cite{H02}}. We note however that if one replaces $\delta$ in each degree $i$ by $n_i\delta$ with $n_i\in\{\pm 1\}$, then the sign in formula~{\ref{formula_transgression}} will not change.}
\end{rmk}

\begin{prop}
\label{injectivity}
The Euler class $e(E)$ is zero if and only if the map $p_0^*\colon H^{*}(X, R) \to H^{*}(\Tot_0(E),R)$ is injective. 
\end{prop}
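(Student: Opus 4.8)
The plan is to reduce the statement to the vanishing of one homomorphism in the long exact sequence~\ref{lessecthom} and then to evaluate that homomorphism by means of the Thom isomorphism. First I would factor the projection. Writing $\pi\colon \Tot(E)\to X$ for the bundle projection and $i\colon \Tot_0(E)\to\Tot(E)$ for the inclusion, we have $p_0=\pi\circ i$, hence $p_0^*=i^*\circ\pi^*$. The zero section $X\to\Tot(E)$ is a homotopy equivalence with homotopy inverse $\pi$, so $\pi^*\colon H^*(X,R)\to H^*(\Tot(E),R)$ is an isomorphism. Consequently $p_0^*$ is injective if and only if $i^*$ is injective, and by exactness of~\ref{lessecthom} the kernel of $i^*$ equals the image of $j^*$. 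Thus it suffices to prove that $j^*=0$ in every degree.

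For the second step I would use that $E$ is $R$-orientable (which is implicit once we speak of the Euler class $e(E)$ in untwisted coefficients) to invoke the Thom isomorphism: cup product with the Thom class $u_E\in H^r(\Tot(E),\Tot_0(E),R)$ makes $H^*(\Tot(E),\Tot_0(E),R)$ a free $H^*(\Tot(E),R)$-module of rank one on the generator $u_E$. Since $j^*$ is a homomorphism of $H^*(\Tot(E),R)$-modules, it is determined by the single value $j^*(u_E)$, and under the identification $H^r(\Tot(E),R)\cong H^r(X,R)$ induced by $\pi^*$ this value is exactly $\pi^*e(E)$. The hypothesis $e(E)=0$ therefore gives $j^*(u_E)=0$, whence $j^*\equiv 0$ and $i^*$ is injective, as required.

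An alternative route, more in the spirit of this section, is to note that by Proposition~\ref{existencesecthom} the hypotheses guarantee a secondary Thom class $a_E\in H^{r-1}(\Tot_0(E),R)$ whose restriction to each fibre generates $H^{r-1}(S^{r-1},R)$. Together with the class $1$ these restrictions form an $R$-basis of the fibre cohomology, so the Leray-Hirsch theorem yields an isomorphism of $H^*(X,R)$-modules $H^*(\Tot_0(E),R)\cong H^*(X,R)\cdot 1\ \oplus\ H^*(X,R)\cdot a_E$, under which $p_0^*$ is the inclusion of the first summand; injectivity is then immediate, and this presentation also refines Proposition~\ref{indsecthom}.

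The step I expect to be the main obstacle is the clean identification $j^*(u_E)=\pi^*e(E)$ together with the role of orientability. Strictly, $e(E)=0$ is only meaningful in ordinary coefficients once $E$ is $R$-orientable, so the statement tacitly presupposes orientability (equivalently one may always argue over $R=\Z/2$); I would make this hypothesis explicit. One should also treat the rank-one case $r=1$ separately, where the fibre is $S^0$ and the phrase ``generator of $H^{0}(S^{0},R)$'' must be read with care, though the Leray-Hirsch argument adapts without difficulty.
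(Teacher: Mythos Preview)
Your proof is correct. Both of your approaches work, but neither is the one the paper uses: the paper simply observes that the Leray--Serre spectral sequence of the sphere bundle $p_0\colon \Tot_0(E)\to X$ has only one possible nontrivial differential (the transgression on the fibre generator, which is cup product with $e(E)$), so the spectral sequence degenerates at $E_2$ exactly when $e(E)=0$, and degeneration forces the edge map $p_0^*$ to be injective.

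Your first argument via the long exact sequence and the Thom isomorphism is a genuinely different and more elementary route: it avoids spectral sequences entirely and reduces everything to the single identity $j^*(u_E)=\pi^*e(E)$. Your second argument via Leray--Hirsch is essentially a repackaging of the paper's spectral sequence argument (Leray--Hirsch is precisely the statement that the spectral sequence collapses when fibre classes extend), though you phrase it without mentioning the spectral sequence. What your approaches buy is that they are self-contained and make the module structure $H^*(\Tot_0(E),R)\cong H^*(X,R)\oplus H^*(X,R)\cdot a_E$ explicit; the paper's one-line proof is shorter but presupposes familiarity with the transgression/Euler-class story for sphere bundles. Your remarks on the tacit $R$-orientability hypothesis and the $r=1$ case are fair and worth noting, though the paper leaves them implicit as well.
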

\begin{proof}
The Leray-Serre spectral sequence of the fibre bundle $p_0\colon\Tot_0 E \to X$ degenerates (i.e., all differentials are zero starting from $d_2$) if and only if the Euler class $e(E)$ is zero.
\end{proof}

Let $G$ be a topological group that acts on $X$ continuously on the left. We say that the vector bundle $E$ is {\it $G$-equivariant} if there is an action of $G$ on $\Tot(E)$ which is compatible with the action of $G$ on $X$ and linear when restricted to each fibre. 

Unless stated otherwise we will assume in the rest of this subsection that $E$ is $G$-equivariant and $R$-oriented, and the action of $G$ preserves the $R$-orientation of $E$. Let $\vp\colon G\times X \to X$ be the action map, and let $p_2\colon G\times X \to X$ be the projection map. Let $\psi$ be an isomorphism of the vector bundles $\vp^*(E)$ and $p_2^*(E)$.
Note that if $E$ is a $G$-equivariant vector bundle on $X$, then there is a preferred choice of $\psi$: the total space of $\vp^*(E)$ (respectively of $p_2^*(E)$) is the space of all triples $$\{(g,x,e)\mid g\in G, x\in X, e\in\Tot(E)\}$$ such that $e\in E_{gx}$ (respectively $e\in E_x$). The isomorphism $\psi:\vp^*(E)\to p_2^*(E)$ is then given by $(g,x,e)\mapsto (g,x,g^{-1}e)$. Note also that the spaces $\Tot_0(\vp^*(E))$ and $\Tot_0(p_2^*(E))$ can be identified with $G\times\Tot_0(E)$:
\begin{align}
\label{equivarianttot}
\eta:\Tot_0(\vp^*(E))\to G\times\Tot_0(E) ,& (g,x,e)\mapsto (g,g^{-1}e)\\
\label{equivarianttot1}
\vartheta:\Tot_0(p_2^*(E))\to G\times\Tot_0(E) ,& (g,x,e)\mapsto (g,e).
\end{align}

These isomorphisms commute with $\psi$, i.e.\ $\vartheta\circ\psi=\eta$. Let us denote the action map $G\times \Tot_0(E) \to \Tot_0(E)$ and the projection $G \times \Tot_0(E) \to \Tot_0(E)$ by $\Phi$ and $P_2$ respectively.
\begin{lmm} 
\label{actionsecthom}
For any $R$-oriented secondary Thom class $a_E\in H^*(\Tot_0(E),R)$ of a $G$-equivariant $R$-oriented vector bundle $E$ the difference $\Phi^*(a_E) - P_2^*(a_E)\in H^*(G\times \Tot_0(E),R)$ belongs to the image of the homomorphism $(\id \times p_0)^*$.
\end{lmm}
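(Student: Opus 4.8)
The plan is to recognise both $\Phi^*(a_E)$ and $P_2^*(a_E)$ as $R$-oriented secondary Thom classes of one and the same vector bundle $p_2^*(E)$ over $G\times X$, and then to invoke the uniqueness statement of Proposition~\ref{indsecthom}. Under the identification~\ref{equivarianttot1}, namely $\Tot_0(p_2^*E)\cong G\times\Tot_0(E)$, the bundle projection $\Tot_0(p_2^*E)\to G\times X$ becomes precisely the map $\id\times p_0$. Hence the indeterminacy group $\im(p_0^*)$ attached to the bundle $p_2^*E$ by Proposition~\ref{indsecthom} is exactly $\im((\id\times p_0)^*)$, and the lemma reduces to showing that $\Phi^*(a_E)$ and $P_2^*(a_E)$ are $R$-oriented secondary Thom classes of $p_2^*E$ for the \emph{same} orientation.

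First I would check that both classes are secondary Thom classes of $p_2^*E$. Under~\ref{equivarianttot1} the projection $P_2$ is the canonical bundle map $\Tot(p_2^*E)\to\Tot(E)$ covering $p_2$, which is a linear isomorphism on each fibre; so Proposition~\ref{functorsecthom} gives that $P_2^*(a_E)$ is a secondary Thom class of $p_2^*E$. For the action map $\Phi$, I would observe that on the fibre over $(g,x)\in G\times X$ it sends $e\in\Tot(E_x)$ to $ge\in\Tot(E_{gx})=\Tot(E_{\vp(g,x)})$; thus $\Phi$ is a bundle map $\Tot(p_2^*E)\to\Tot(E)$ covering $\vp$ and fibrewise linearly isomorphic, and Proposition~\ref{functorsecthom} again shows $\Phi^*(a_E)$ is a secondary Thom class of $p_2^*E$. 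At this stage both classes lie in $H^{r-1}(\Tot_0(p_2^*E),R)$ and restrict to a generator on each fibre.

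The main point, and the step where the hypothesis that the $G$-action preserves the orientation is genuinely used, is to verify that the two classes are $R$-oriented for the same Thom class, i.e. that $\delta(\Phi^*(a_E))=\delta(P_2^*(a_E))$. By naturality of the long exact sequence~\ref{lessecthom} with respect to the bundle maps $\Phi$ and $P_2$, this reduces to the identity $\Phi^*(u_E)=P_2^*(u_E)$ of Thom classes in $H^{r}(\Tot(p_2^*E),\Tot_0(p_2^*E),R)$, where $P_2^*(u_E)$ is the standard Thom class of the pullback $p_2^*E$. To compute $\Phi^*(u_E)$ I would factor $\Phi$ as the composite of the homeomorphism $\psi_0$ from~\ref{equivarianttot} with the canonical map $\Tot(\vp^*E)\to\Tot(E)$ covering $\vp$; the latter carries $u_E$ to the standard Thom class of $\vp^*E$, while $\psi_0$ is induced by the bundle isomorphism $\psi$, which acts fibrewise through elements of $G$. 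Since the $G$-action preserves the $R$-orientation of $E$, the map $\psi$ is orientation-preserving, so it matches the standard Thom classes of $\vp^*E$ and $p_2^*E$; combining these identifications yields $\Phi^*(u_E)=P_2^*(u_E)$.

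With both $\Phi^*(a_E)$ and $P_2^*(a_E)$ exhibited as $R$-oriented secondary Thom classes of $p_2^*E$ (Definition~\ref{orientedsecthom}) for the common Thom class $P_2^*(u_E)$, Proposition~\ref{indsecthom} applies directly and shows that their difference lies in $\im(p_0^*)=\im((\id\times p_0)^*)$, which is exactly the assertion. I expect the only delicate part to be the bookkeeping around the two homeomorphisms~\ref{equivarianttot}--\ref{equivarianttot1} and the precise direction of $\psi$; once $\Phi$ is correctly identified as a fibrewise-linear, orientation-compatible bundle map covering $\vp$, the rest is a direct application of the results already established.
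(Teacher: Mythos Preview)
Your proposal is correct and follows essentially the same approach as the paper: use Proposition~\ref{functorsecthom} to see that both $\Phi^*(a_E)$ and $P_2^*(a_E)$ are secondary Thom classes of $p_2^*E$, verify (using the orientation-preserving hypothesis) that they correspond to the same Thom class under $\delta$, and then apply Proposition~\ref{indsecthom}. The paper checks the equality of Thom classes fibrewise rather than via your global factorisation through $\psi_0$, but this is a cosmetic difference.
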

\begin{proof}
If we pull $P_2^*(a_E)$ back along $\vartheta$, we will get a secondary Thom class of the bundle $p_2^*(E)$ by Proposition~\ref{functorsecthom}. Similarly, by pulling $\Phi^*(a_E)$ back along $\psi^{-1}\circ\eta$ we get a secondary Thom class of the same bundle. 
Let us check that these two secondary Thom classes correspond to the same $R$-orientation, i.e., that they map to the same Thom class under the connecting homomorphism $$H^*(\Tot_0(p_2^*(E)),R)\to H^{*+1}(\Tot(p_2^*(E)),\Tot_0(p_2^*(E)),R).$$ 
It suffices to check this over each point $(g,x)$ of $G\times X$. Since the action of $G$ is orientation-preserving, the maps $\Phi\circ\eta\circ\psi^{-1}=\Phi\circ\vartheta$ and $P_2\circ\vartheta$ induce the same orientation on the fibre of $p_2^*(E)$ over $(g,x)$. The lemma now follows from Proposition \ref{indsecthom}.
\end{proof}
\begin{ntt}
\label{partialaction}
Suppose that $X$ is path connected and the Euler class of $E$ is zero; let $a_E$ be a secondary Thom class. Since the cohomology group $H^{r-1}(\Tot_0 (E),R)$ is isomorphic as an $R$-module to the direct sum $p_0^*(H^{r-1}(X),R)\oplus R[a_E]$, there is a homomorphism of $R$-modules
\begin{equation*}
\partial \Phi^*\colon H^{r-1}(\Tot_0 (E),R)\to H^{r-1}(G\times X, R)
\end{equation*}
such that $(\id \times p_0)^*\circ(\partial \Phi^*) = \Phi^* - P_2^*.$ In order to construct such a map $\partial\Phi^*$ recall that $p_0^*$ is injective (Proposition~\ref{injectivity}). So we can set $\partial\Phi(p_0^*(x))=\varphi^*(x)-p_2^*(x)$ for $x\in H^{r-1}(X,R)$, and we define $\partial \Phi^*(a_E)$ using Lemma~\ref{actionsecthom}.
\end{ntt} 
\begin{dfn}
\label{indeterminacy submodule}
The graded $R$-module 
\begin{equation*}
M_X^*:= \im\big((\vp^*-p_2^*)\colon H^*(X,R)\to H^*(G\times X,R)\big)
\end{equation*}
 will be called the \it{indeterminacy submodule}\rm. We will omit the subscript if the space $X$ is clear from the context.
\end{dfn}
\begin{cor}
\label{patrialactioniswelldefined}
Let $a_E$ and $a'_E$ be $R$-oriented secondary Thom classes of $E$.  
Then the difference 
\begin{equation*}
\partial \Phi^*(a_E)-\partial \Phi^*(a'_E)
\end{equation*}
belongs to the indeterminacy submodule $M^{r-1}$.
\end{cor}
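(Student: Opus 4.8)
The plan is to reduce everything to the description of the ambiguity of an oriented secondary Thom class in Proposition~\ref{indsecthom} and then to compute $\partial\Phi^*$ on that ambiguity explicitly. First I would invoke Proposition~\ref{indsecthom}: since $a_E$ and $a'_E$ are both $R$-oriented secondary Thom classes, there is a class $b\in H^{r-1}(X,R)$ with $a_E = a'_E + p_0^*(b)$. Because $\partial\Phi^*$ is a homomorphism of $R$-modules, this gives
\[
\partial\Phi^*(a_E)-\partial\Phi^*(a'_E)=\partial\Phi^*\bigl(p_0^*(b)\bigr),
\]
so it suffices to show that $\partial\Phi^*(p_0^*(b))$ lies in $M^{r-1}_X$. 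In fact I would aim for the sharper identity $\partial\Phi^*(p_0^*(b)) = (\vp^*-p_2^*)(b)$, which makes membership in $M^{r-1}_X=\im(\vp^*-p_2^*)$ immediate.

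The heart of the argument is a pair of commuting-square identities coming from the compatibility of the $G$-action on $E$ with the action on $X$. Since $\Phi$ covers $\vp$ and $P_2$ covers $p_2$ along $p_0$, I would check that
\[
p_0\circ\Phi = \vp\circ(\id\times p_0),\qquad p_0\circ P_2 = p_2\circ(\id\times p_0).
\]
The first holds because for $e\in E_x$ one has $\Phi(g,e)=ge\in E_{gx}$, hence $p_0(ge)=gx=\vp(g,p_0(e))$; the second is the evident fact that $P_2$ forgets the total-space coordinate down to its base point. Pulling $b$ back through these two squares yields
\[
\Phi^*(p_0^*(b))=(\id\times p_0)^*(\vp^*(b)),\qquad P_2^*(p_0^*(b))=(\id\times p_0)^*(p_2^*(b)).
\]

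Finally I would combine these with the defining relation $(\id\times p_0)^*\circ\partial\Phi^*=\Phi^*-P_2^*$ from Notation~\ref{partialaction}. Applying it to $p_0^*(b)$ and substituting the two formulas above gives
\[
(\id\times p_0)^*\bigl(\partial\Phi^*(p_0^*(b))\bigr)=(\id\times p_0)^*\bigl((\vp^*-p_2^*)(b)\bigr).
\]
Since the Euler class $e(E)$ vanishes, Proposition~\ref{injectivity} guarantees that $(\id\times p_0)^*$ is injective---this is exactly the injectivity already used to make $\partial\Phi^*$ well-defined---so I may cancel it to conclude $\partial\Phi^*(p_0^*(b))=(\vp^*-p_2^*)(b)\in M^{r-1}_X$, as desired. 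I do not expect a genuine obstacle here: the only points requiring care are the verification of the two commuting squares, where one must track that $\Phi$ is genuinely the fibrewise-linear action so that $p_0$ intertwines $\Phi$ with $\vp$, and the applicability of the injectivity of $(\id\times p_0)^*$, which is inherited directly from the hypothesis $e(E)=0$.
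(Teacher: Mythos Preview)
Your proof is correct and follows essentially the same approach as the paper: both use Proposition~\ref{indsecthom} to write $a_E-a'_E=p_0^*(b)$, compute $(\Phi^*-P_2^*)(p_0^*(b))=(\id\times p_0)^*\bigl((\vp^*-p_2^*)(b)\bigr)$ via the equivariance of $E$, and conclude using the defining relation for $\partial\Phi^*$ together with the injectivity of $(\id\times p_0)^*$. You are simply more explicit about verifying the commuting squares and invoking Proposition~\ref{injectivity}.
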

\begin{proof}
Indeed, by Proposition~\ref{indsecthom} the difference $a_E -a'_E$ is equal to $p^*_0(b)$ for some $b \in H^{r-1}(X,R)$. Since the vector bundle $E$ is $G$-equivariant, we have
\begin{equation*}
(\id \times p_0)^*\circ(\partial \Phi^*)(a_E-a'_E)=(\Phi^*-P_2^*)(a_E-a'_E) =(\Phi^*-P_2^*)(p_0^*(b))=(\id\times p_0)^*\big((\vp^*-p_2^*)(b)\big). 
\end{equation*}
The last equality here follows from the fact that $p_0\circ \Phi=\varphi\circ (\id\times p_0)$, and similarly for $P_2$ and $p_2$. The map $(\id\times p_0)^*$ is injective, as $p_0$ is, so we conclude that $\partial \Phi^*(a_E - a'_E)$ is $(\vp^*-p_2^*)(b)\in M^{r-1}$.
\end{proof}
We now wish to describe the cohomology class $\partial \Phi^*(a_E)\in H^{r-1}(X,R)/M^{r-1}$ in more concrete terms. Recall that if we are given two continuous maps
\begin{equation}
\label{nhqdiagram}
\begin{tikzcd}[column sep=large]
A \arrow[shift left=.75ex]{r}{f}
  \arrow[shift right=.75ex,swap]{r}{g}
&
B
\end{tikzcd}
\end{equation}
of topological spaces, then the homotopy colimit of \ref{nhqdiagram} is called the {\it homotopy coequaliser} of $f$ and $g$. On the level of spaces it can be constructed as follows: we take the cylinders of $f$ and $g$ and identify the copies of both $A$ and $B$. The resulting space $C$ is equipped with a map $q:B\to C$.
\begin{rmk}
\label{puppe}
Given a homotopy coequaliser diagram
\begin{equation*}
\begin{tikzcd}[column sep=large]
A \arrow[shift left=.75ex]{r}{f}
  \arrow[shift right=.75ex,swap]{r}{g}
&
B \arrow{r}{q}
&
C.
\end{tikzcd}
\end{equation*}
there exists a long exact sequence of cohomology groups:
\begin{align}\label{longexactcoeqgen}
\cdots&\to  H^{*-1}(B,R)\xrightarrow{f^*-g^*} H^{*-1}(A,R)\xrightarrow{\delta} H^{*}(C,R) \xrightarrow{q^*} \\ 
&\to H^{*}(B,R) \xrightarrow{f^*-g^*} H^{*}(A,R)\xrightarrow{\delta} H^{*+1}(C,R)\to \cdots  \nonumber
\end{align}
Notice that the homomorphism $\delta\colon H^{*-1}(A,R)\to H^{*}(C,R)$ in this sequence is induced by the natural map 
\begin{equation}
\label{puppemap}
\Delta\colon C \to \Sigma(A_+).
\end{equation}
and
\begin{equation*}
B \xrightarrow{q} C \xrightarrow{\Delta} \Sigma(A_+),
\end{equation*}
is a cofibre sequence.
\end{rmk}

\begin{dfn} 
\label{nhq}
Recall that $\vp$ denotes the action map $G\times X\to X$. We define the {\it naive homotopy quotient $\naive{X}{G}$ of $X$ by the $G$-action }\rm as the homotopy coequaliser of the maps $\vp$ and $p_2$. We will denote the map $X\to \naive{X}{G}$ by $q_X$ (or just $q$ if it is clear which space $X$ is meant).
\end{dfn}

Recall that $EG$ is a contractible topological space with a continuous right free $G$-action such that the quotient map $EG\to (EG)/G$ is a principal $G$-bundle.

\begin{dfn}
\label{hq}
The {\it homotopy quotient} or the {\it Borel quotient} $X_{hG}$ of $X$ by $G$ is the quotient space $(X\times EG)/G$ with respect to the diagonal action of $G$ given by $g(x,e)=(gx,eg^{-1})$. There is a natural map $\alpha\colon X\to X_{hG}$ and the homotopy fibre of $\alpha$ is~$G$.
\end{dfn}

\begin{rmk}
\label{justification of naive homotopy quetient}
Note that the ordinary colimit in the category of topological spaces of the diagram
\begin{equation*}
\begin{tikzcd}[column sep=large]
G\times X \arrow[shift left=.75ex]{r}{\varphi}
  \arrow[shift right=.75ex,swap]{r}{p_2}
&
X
\end{tikzcd}
\end{equation*}
is the quotient space $X/G$, which explains the analogy between $X/G$ and $\naive{X}{G}$. We call $\naive{X}{G}$ the \it{naive }\rm homotopy quotient because it can be viewed as a first approximation to the ``true'' homotopy quotient $X_{hG}$, see Definition~\ref{hq}. Indeed, the space $X_{hG}$ is homotopy equivalent to the geometric realisation of the following simplicial bar construction:
\begin{equation*}
\begin{tikzcd}[column sep=4ex]
\cdots
\arrow[r, shift left=.5ex, dashrightarrow] 
\arrow[r, shift right=0.5ex, dashleftarrow]
& G\times G\times G\times X
\arrow[r, shift left=3ex, rightarrow] 
\arrow[r, shift right=2ex, leftarrow]
\arrow[r, shift left=1ex, rightarrow] 
\arrow[r, shift right=0ex, leftarrow]
\arrow[r, shift right=1ex, rightarrow] 
\arrow[r, shift left=2ex, leftarrow]
\arrow[r, shift right=3ex, rightarrow]
&
G\times G\times X
\arrow[r, shift left=2ex, rightarrow] 
\arrow[r, shift right=1ex, leftarrow]
\arrow[r, shift left=0ex, rightarrow] 
\arrow[r, shift left=1ex, leftarrow]
\arrow[r, shift right=2ex, rightarrow] 
&
G\times X
\arrow[r, shift left=1ex, rightarrow] 
\arrow[r, shift right=0ex, leftarrow]
\arrow[r, shift right=1ex, rightarrow] 
&
X.
\end{tikzcd}    
\end{equation*}
Here each face map is either the action map $\varphi$, the multiplication map of the group $G$, or the projection. The degeneracy maps are induced by the inclusion of the unit element into $G$, see~\cite{May75}*{Section~7} for more details. Under mild conditions on $X$ and $G$ this simplicial space is good in the sense of G.~Segal. Therefore by~\cite{Seg74}*{Appendix~A} its geometric realisation is weakly homotopy equivalent to the geometric realisation of the corresponding semisimplical space
\begin{equation*}
\begin{tikzcd}[column sep=4ex]
\cdots \arrow[dashrightarrow]{r}
&
G\times G \times G \times X \arrow[shift left=1.5ex]{r}
  \arrow[shift left=.5ex]{r}
  \arrow[shift right=.5ex]{r}
  \arrow[shift right=1.5ex]{r}
&
G\times G \times X \arrow[shift left=1ex]{r}
  \arrow{r}
  \arrow[shift right=1ex]{r}
&
G\times X \arrow[shift left=.5ex]{r}
  \arrow[shift right=.5ex]{r}
&
X.
\end{tikzcd}    
\end{equation*}

The geometric realisation is the homotopy colimit of this diagram, and the naive homotopy quotient is simply the homotopy colimit of the first stage.

%
\end{rmk}

\begin{ntt}
\label{naivetohomotopy}
Note that the homotopy colimit of a diagram naturally maps to the ordinary colimit of the same diagram. So there exists a natural map from $\naive{X}{G}$ to $X/G$. We will denote the following composition by $\gamma'_X$:
\begin{equation*}
    \gamma'_X\colon \naive{X}{G} \xrightarrow{\simeq} \naive{(X\times EG)}{G} \xrightarrow{\phantom{\simeq}}  (X\times EG)/G=X_{hG}.
\end{equation*}
Moreover, note that the following diagram commutes:
\begin{equation}
\label{compatability of naive and homotopy quotients}
\begin{tikzcd}[column sep=large]
X \arrow{r}{q_X} \arrow[swap]{rd}{\alpha}
& \naive{X}{G} \arrow{d}{\gamma'_{X}} \\
&X_{hG}.
\end{tikzcd}
\end{equation}
\end{ntt}

\begin{exmp}
\label{underpoint}
If $X$ is a one point space, then $\naive{X}{G}\simeq \Sigma (G_+)\simeq \Sigma G \vee S^1$. Moreover, we claim that $\gamma'_{X}$ is then nullhomotopic on the second component of the wedge, $S^1$, and on the first component it is the map $\gamma\colon \Sigma G \to BG$ obtained from the natural homotopy equivalence $G\xrightarrow{\simeq} \Omega BG$ by the suspension-loop space adjunction.

This can be seen as follows. The space $pt_{nG}\simeq \Sigma G_+$ is the quotient of $G\times I$ by the equivalence relation that collapses $G\times\{0,1\}$. Let $e_0\in EG$ be a base point. Let $F:EG\times I\to EG$ be a homotopy such that $F(-,0)$ is the constant map that takes everything to $e_0$ and $F(-,1)$ is the identity.
We will denote the free right action of $G$ on $EG$ by $\cdot$ and use the square brackets denote the image of an element of $EG$ in $BG$.

The space $EG_{nG}$ is the quotient space of $G\times EG\times I$ by an equivalence relation $\sim$. Note that in the definition of the naive homotopy quotient we use left actions, so we need to transform the natural right action of $G$ on $EG$ into a left one as above in Definition~\ref{hq}, and $\sim$ is generated by $$(g_1,e,0)\sim (g_2,e,0)\sim (g_1,e\cdot g_1,1)\sim (g_2,e\cdot g_2,1),g_1,g_2\in G,e\in EG.$$ The map $EG_{nG}\to BG$ is induced by the map $G\times EG\times I\to BG$ that takes $(g,e,t)\in G\times EG\times I$ to $[e]$. The homotopy equivalence $pt_{nG}\to EG_{nG}$ can be given explicitly as the map induced by $$G\times I\ni (g,t)\mapsto (g,F(e_0\cdot g,t),t).$$ The map $\gamma'_{pt}$ is by definition the composition of this map with $EG_{nG}\to BG$. The resulting map $\Sigma G_+\to BG$ is induced by
\begin{equation}\label{sigmagbg}
G\times I\ni (g,t)\mapsto [F(e_0\cdot g,t)],
\end{equation}
and it has the properties that we stated above.

\end{exmp}
We will now discuss vector bundles over the naive homotopy quotient $\naive{X}{G}$.  The category of vector bundles over the space $\naive{X}{G}$ is equivalent to the category of pairs $(E, \psi)$, where $E$ is a vector bundle over $X$ and $\psi \colon\vp^* E \to p_2^* E$ is an isomorphism of vector bundles over $G\times X$.
\begin{ntt} 
\label{nhdescent}
Suppose that $(E,\psi)$ is a $G$-equivariant vector bundle over $X$. Let $\naive{E}{G}$ be the vector bundle over $\naive{X}{G}$ constructed using the pair $(E, \psi)$.
\end{ntt}
Let us apply Remark~\ref{puppe} to the homotopy coequaliser of $\varphi$ and $p_2:G\times X\to X$. The resulting long exact sequence reads:
\begin{align}
\label{lesnhq}
\cdots&\to  H^{*-1}(X,R)\xrightarrow{\vp^*-p_2^*} H^{*-1}(G\times X,R)\xrightarrow{\delta} H^{*}(\naive{X}{G},R) \xrightarrow{q_X^*} \\ 
&\to H^{*}(X,R) \xrightarrow{\vp^*-p_2^*} H^{*}(G\times X,R)\xrightarrow{\delta}  H^{*+1}(\naive{X}{G},R)\to \cdots  \nonumber
\end{align}
\begin{dfn} 
\label{seceqeuler}
Recall that $E$ is a $G$-equivariant real $R$-oriented vector bundle of rank $r$ over $X$ and suppose that the Euler class of $E$ is zero. A cohomology class $se_G(E) \in H^{r-1}(G\times X,R)$ is called {\it a secondary equivariant Euler class} of $E$ if and only if
\begin{equation*}
\delta(se_G(E))=e(\naive{E}{G}) \in H^{r}(\naive{X}{G},R). 
\end{equation*}
Here $\naive{E}{G}$ is the vector bundle defined in Notation~\ref{nhdescent} and $\delta\colon H^{r-1}(G\times X,R)\xrightarrow{\delta} H^{r}(\naive{X}{G},R)$ is the boundary homomorphism from long exact sequence~\ref{lesnhq}
\end{dfn}
\begin{rmk}
A secondary equivariant Euler class of the vector bundle $E$ is unique up to the indeterminacy submodule~$M^{r-1}_X$. Indeed, by long exact sequence~\ref{lesnhq} the kernel of $\delta$ is precisely~$M^{r-1}_X$.
\end{rmk}

\begin{prop} 
\label{secthomseceuler}
Recall that $E$ is a $G$-equivariant $R$-oriented real vector bundle of rank $r$. Suppose $X$ is path-connected and $e(E)=0$. Then for every $R$-oriented secondary Thom class $a_E \in H^{r-1}(\Tot_0(E),R)$ and for every secondary equivariant Euler class $se_G(E) \in H^{r-1}(G\times X, R)$ the equality 
\begin{equation*}
\partial \Phi^*(a_E) = - se_G(E) 
\end{equation*}
holds up to the indeterminacy submodule $M^{r-1}_X$.
\end{prop}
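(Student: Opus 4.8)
The plan is to reduce the statement to a single equality of Euler-type classes and then to establish that equality by realizing everything over the double mapping cylinder model of $X\ast_G X$. First I would record the reduction. By the remark following Definition~\ref{seceqeuler}, a secondary equivariant Euler class $se_G(E)$ is unique modulo $M^{r-1}_X$, and by exactness of long exact sequence~\ref{lesnhq} the submodule $M^{r-1}_X=\im(\vp^*-p_2^*)$ is precisely $\ker\big(\delta\colon H^{r-1}(G\times X,R)\to H^r(X\ast_G X,R)\big)$. Since $\partial\Phi^*(a_E)$ is well defined modulo $M^{r-1}_X$ by Corollary~\ref{patrialactioniswelldefined}, two classes in $H^{r-1}(G\times X,R)$ agree modulo $M^{r-1}_X$ if and only if they have the same image under $\delta$. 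Hence the proposition is equivalent to the single equality
\begin{equation*}
\delta\big(\partial\Phi^*(a_E)\big)=e(E\ast_G E)\in H^r(X\ast_G X,R),
\end{equation*}
the right-hand side being $\delta(se_G(E))$ by Definition~\ref{seceqeuler}.

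Next I would identify the relevant total spaces as naive homotopy quotients. Using the preferred isomorphism $\psi$, the descent datum defining $E\ast_G E$ (Notation~\ref{nhdescent}) is exactly the $G$-action on $\Tot(E)$; consequently $\Tot(E\ast_G E)\cong \Tot(E)\ast_G\Tot(E)$ and, removing the ($G$-invariant) zero section, $\Tot_0(E\ast_G E)\cong \Tot_0(E)\ast_G\Tot_0(E)$. The bundle projection $p_0$ and the zero section are $G$-equivariant bundle maps, so they induce maps of homotopy-coequalizer diagrams~\ref{nhqdiagram} and hence a commutative ladder of the long exact sequences~\ref{lesnhq}; this naturality is what will let me transport the Thom-class data of $E$ into the coequalizer sequence for $X$.

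The geometric heart of the argument is then the following. Modeling $X\ast_G X$ as the double mapping cylinder of $\vp$ and $p_2$ (Remark~\ref{puppe}), the bundle $F:=E\ast_G E$ restricts to $E$ over the central copy of $X$, where $e(E)=0$, while over the two ends of the cylinder $F$ is glued to $E$ along the action map $\Phi$ and the projection $P_2$ respectively. The chosen $R$-oriented secondary Thom class $a_E$ witnesses the vanishing of $e(E)$ through the defining relation of Definition~\ref{orientedsecthom}; transported to the two ends of the cylinder it becomes $\Phi^*(a_E)$ and $P_2^*(a_E)$, whose difference equals $(\id\times p_0)^*\partial\Phi^*(a_E)$ by Notation~\ref{partialaction}. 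Since $q_X^*e(F)=e(q_X^*F)=e(E)=0$, the class $e(F)$ lies in $\im\delta$, and the above says precisely that it is obtained by gluing the two secondary Thom witnesses across the cylinder, i.e. $e(F)=\delta\big(\partial\Phi^*(a_E)\big)$.

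To turn this picture into a proof I would run a diagram chase in the commutative ladder of the second paragraph, starting from $a_E\in H^{r-1}(\Tot_0(E),R)$ and using that the oriented secondary Thom class maps to the Thom class $u_E$, together with the fact that $u_E$ descends to the Thom class $u_F$ of $F$; this descent is consistent exactly because, by Lemma~\ref{actionsecthom} and the standing assumption that $G$ preserves the orientation, $\Phi^*$ and $P_2^*$ carry $a_E$ to oriented secondary Thom classes for the \emph{same} orientation. The main obstacle I anticipate is the bookkeeping: checking that the square relating the coequalizer boundary $\delta$ to the connecting map of the pair $(\Tot(F),\Tot_0(F))$ commutes, and that under the suspension isomorphism implicit in $\delta$ the difference across the cylinder of $\Phi^*(a_E)$ and $P_2^*(a_E)$ is identified with $\partial\Phi^*(a_E)$ with the correct sign. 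The orientation hypothesis is precisely what removes the sign and indeterminacy ambiguity at the two ends and makes this identification go through.
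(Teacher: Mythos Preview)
Your proposal is correct and follows essentially the same route as the paper: reduce to the equality $\delta(\partial\Phi^*(a_E))=e(E\ast_G E)$, use the double mapping cylinder model, identify $\Tot_0(E\ast_G E)\cong \Tot_0(E)\ast_G\Tot_0(E)$, and then run a diagram chase linking the Euler-class description of $e(E\ast_G E)$ to the difference $\Phi^*(a_E)-P_2^*(a_E)$. The paper packages this chase into an explicit $3\times 3$ diagram (diagram~\ref{cd11}) whose only nontrivial square---precisely the one you flag as the ``main obstacle''---commutes by the Puppe-sequence description of the coequalizer boundary (Remark~\ref{puppe}); your anticipated bookkeeping is exactly what that remark handles.
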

\begin{proof}
To prove the proposition it would suffice to show that $\delta (\partial \Phi^*(a_E)) = -\delta(se_G(E)) =- e(\naive{E}{G})$. 
Let $F_0$ be the fibre of the projection map $p_0\colon \Tot_0(\naive{E}{G}) \to \naive{X}{G}$ over a point $x \in \im(q_X)$. Let $i_x\colon F_0 \to \Tot_0(E)$ denote the embedding of the fibre. Since the vector bundle $E$ is $R$-oriented, there is a preferred $R$-generator $a_{F} \in H^{r-1}(F_0,R)$. The image of $a_F$ under the transgression homomorphism in the Leray-Serre spectral sequence of the bundle $\Tot_0(\naive{E}{G})\to \naive{X}{G}$ is $-e(\naive{E}{G})$ (see Proposition~\ref{transgression}). 

Recall also that the transgression homomorphism can be constructed as explained e.g.\ in \cite{Mc01}*{Section~6.2}, so $-e(\naive{E}{G})=(p_0^*)^{-1}(\delta(a_{F}))$, where $\delta$ and $p_0^*$ are given in the diagram below
 \begin{equation*}
\begin{tikzcd}
H^{r-1}(F_0,R) \arrow{r}{\delta} 
&H^{r}(\Tot_0(\naive{E}{G}), F_0, R)
&H^{r}(\naive{X}{G},R) \arrow{l}[swap]{p_0^*}.
\end{tikzcd}
\end{equation*}
Note that in our case there is a unique element of $H^{r}(\naive{X}{G},R)$ that goes to $\delta(a_F)$ under $p_0^*$. Notice that the topological space $\Tot_0(\naive{E}{G})$ is homeomorphic to the space $\naive{(\Tot_0E)}{G}$. Consider the following commutative diagram:
\begin{equation}
\label{cd11}
\begin{tikzcd}
H^{r-1}(F_0,R) \arrow{r}{\delta} 
&H^{r}(\Tot_0(\naive{E}{G}), F_0, R)
&H^{r}(\naive{X}{G},\{x\},R) \arrow{l}[swap]{p_0^*} \\
H^{r-1}(\Tot_0(E),R)\arrow{r}{\delta} \arrow{u}{i^*_x} \arrow{d}[swap]{\Phi^* - P_2^* }
&H^{r}(\Tot_0(\naive{E}{G}), \Tot_0(E), R) \arrow{u}
&H^{r}(\naive{X}{G}, X, R) \arrow{u}{j^*}\arrow{l} \\
H^{r-1}(G \times \Tot_0(E),R) \arrow{r}{\cong} 
&\widetilde{H}^{r}(\Sigma(G \times \Tot_0(E))_+, R) \arrow{u}{\cong}
&\widetilde{H}^{r}(\Sigma(G \times X)_+,R) \arrow{l} \arrow{u}{\cong}
\end{tikzcd}
\end{equation}

In this diagram the lower left square is constructed as follows: given a homotopy coequaliser diagram
\begin{equation}\label{cd11b}
\begin{tikzcd}[column sep=large]
A \arrow[shift left=.75ex]{r}{f}
  \arrow[shift right=.75ex,swap]{r}{g}
&
B \arrow{r}{q}
&
C\arrow{r}
&
\Sigma A_+
\end{tikzcd}
\end{equation}
as above we have the following commutative square:
\begin{equation}
\label{cd11a}
\begin{tikzcd}
H^{r-1}(B,R)\arrow{r}\arrow{d}[swap]{f^*-g^*} & H^r(C,B,R)\\
H^{r-1}(A,R) \arrow{r}{\cong} & \widetilde{H}^r(\Sigma A_+,R)\arrow{u}{\cong}
\end{tikzcd}
\end{equation}
This gives the lower left square of \ref{cd11} if we set $A=G\times\Tot_0(E), B=\Tot_0(E), C=\Tot_0(\naive{E}{G})$. Furthermore, diagrams \ref{cd11a} are functorial with respect to morphisms of homotopy coequaliser diagrams \ref{cd11b}. The lower right square of \ref{cd11} is obtained from the map of the right arrows of diagrams \ref{cd11a} induced by
\begin{equation}\label{coeqdiagrammap}
\begin{tikzcd}
G\times X
\arrow[shift left=.75ex]{r}{\varphi}
 \arrow[shift right=.75ex,swap]{r}{p_2}
&
X \arrow{r}{q}
&
\naive{X}{G}\arrow{r}
&
\Sigma (G\times X)_+\\
G\times\Tot_0(E) \arrow[shift left=.75ex]{r}{\Phi}
  \arrow[shift right=.75ex,swap]{r}{P_2}
  \arrow{u}{\id\times p_0}
&
\Tot_0(E) \arrow{r}{q}
\arrow{u}{p_0}
&
\Tot_0(\naive{E}{G})\arrow{r}\arrow{u}
&
\Sigma (G\times\Tot_0(E))_+\arrow{u}
\end{tikzcd}
\end{equation}

Now let us apply diagram~\ref{cd11} to our situation. Since $e(E)=0$, there exists an $R$-oriented secondary Euler class~$a_E\in H^{r-1}(\Tot_0(E),R)$ such that $i_x^*(a_E)=a_F$, and so $(p_0^*)^{-1}\circ \delta\circ i_x^*$ applied to $a_E$ is $-e(\naive{E}{G})$.

Let us now see what happens to $a_E$ if we take it along a different path in \ref{cd11}. Recall that $\Phi^* - P_2^*$ decomposes as $(\id\times p_0)^*\circ \partial\Phi^*$. The following diagram is the map of the bottom arrows of diagrams \ref{cd11a} induced by \ref{coeqdiagrammap}.
\begin{equation}\label{cd11c}
\begin{tikzcd}
H^{r-1}(G\times\Tot_0(E),R)\arrow{r}{\cong}& \widetilde{H}^r(\Sigma(G\times\Tot_0(E))_+,R)\\
H^{r-1}(G\times X,R) \arrow{r}{\cong}[swap]{\Sigma}\arrow{u}{\id\times p_0^*} & \widetilde{H}^r(\Sigma (G\times X)_+,R)\arrow{u}
\end{tikzcd}
\end{equation}
It follows from this diagram that in~\ref{cd11}, the image of $a_E$ in $\widetilde{H}^r(\Sigma(G\times\Tot_0(E))_+,R)$ is the same as the image of $\Sigma(\partial\Phi^*(a_E))\in \widetilde{H}^r(\Sigma(G\times X)_+,R)$ where $\Sigma$ denotes the bottom arrow of \ref{cd11c}. Finally, note that the composition of $\Sigma$ and the two right arrows of \ref{cd11} is simply the homomorphism $\delta$ from \ref{lesnhq}. We can now conclude that $\delta(\partial\Phi^*(a_E))= - e(\naive{E}{G})$, which is what we wanted to prove.
\end{proof}

\subsection{Calculating secondary equivariant Euler classes}\label{maps}
Here we will describe a method to calculate the cohomology class~$\partial \Phi^*(a_E)$. In this subsection $G$ is a path-connected topological group that acts continuously on a path-connected topological space $X$, and $E$ is an $R$-oriented $G$-equivariant real vector bundle over $X$ of rank $r$ where $R$ is a commutative ring with identity; we will assume that the action of $G$ preserves the $R$-orientation of $E$. Consider the fibre sequence
\begin{equation}
\label{mainfs}
X \xrightarrow{\alpha} X_{hG} \xrightarrow{\beta} BG.
\end{equation} 
\begin{ntt}
\label{mainidealeq}
We set $I^*(X,R)$ to be the kernel of the natural map $\alpha^*\colon H^*(X_{hG},R)\to H^*(X,R)$.
\end{ntt}

We will now describe one of our main constructions. It follows from commutative diagram \ref{compatability of naive and homotopy quotients} that $(\gamma'_X)^*(I^*(X,R))\subset \ker(q_X^*)\subset H^*(X_{nG},R)$. So using long exact sequence~\ref{lesnhq} we obtain $(\gamma'_X)^*(I^*(X,R)) \subset \delta(H^{*-1}(G\times X,R))$, where $\delta$ is the connecting homomorphism in~\ref{lesnhq}. Recall that $M_X^*$ denotes the indeterminacy submodule (Definition~\ref{indeterminacy submodule}) and $M_X^*$ is precisely the kernel of $\delta$. This motivates the following definition.

\begin{dfn}
\label{mainhomomorphism}
We define a homomorphism
\begin{equation*}
S\colon I^*(X,R) \to H^{*-1}(G\times X,R)/M_X^{*-1}
\end{equation*}
as follows. For an $x\in I^*(X,R)$ we set $S(x)$ to be the element of $H^{*-1}(G\times X,R)/M_X^{*-1}$ that goes to $(\gamma'_X)^*(x)\in H^*(X_{nG},R)$ under the homomorphism induced by $\delta$.
\end{dfn}

The homomorphism $S$ will be a key ingredient in our applications. The first property of $S$ that we will need is that both its source and target have natural structures of (right) $H^*(X_{hG},R)$-modules which make $S$ an $H^*(X_{hG},R)$-linear map. To prove this we need the following lemma.

\begin{lmm}\label{coeqmod}
Suppose
\begin{equation}\label{coeqmoddiag}
\begin{tikzcd}[column sep=large]
A \arrow[shift left=.75ex]{r}{f}
  \arrow[shift right=.75ex,swap]{r}{g}
&
B \arrow{r}{q}\arrow{d}{h} & C\arrow{dl}{i}\\
 & Z &
\end{tikzcd}
\end{equation}
is a homotopy commutative diagram of topological spaces and continuous maps with the top row being a homotopy coequaliser diagram, and such that $h\circ f=h\circ g$. We make $H^*(A,R), H^*(B,R)$ and $H^*(C,R)$ into right graded $H^*(Z,R)$-modules using the maps $h\circ f=h\circ g, h$ and $i$ respectively. Then all maps in long exact sequence~\ref{longexactcoeqgen} will be maps of right $H^*(Z,R)$-modules.
\end{lmm}

\begin{proof}
It suffices to prove the lemma in the case $Z=C, i=\id, h=q$, so let us assume this. All maps in the long exact cohomology sequence of an excisive couple $(A_1,A_2), A_1\supset A_2$ are maps of right $H^*(A_1,R)$-modules, see e.g.\ \cite{Switzer75}*{Proposition 13.56, part (iii)}. Sequence \ref{longexactcoeqgen} is the long exact sequence of the couple $(C,B)$. By the definition of a homotopy coequaliser there is a natural map $F:A\times I\to C$ that takes $A\times\{0,1\}$ to $B$. Let $\bar F:\Sigma A_+\to C/B$ be the resulting map of quotient spaces. Using the commutative diagram 
\begin{equation}\label{diagcoeq}
\begin{tikzcd}
B \arrow[r,"\subset"] & C \arrow[r] & C/B \\
A\times \{0,1\} \arrow[r,"\subset"]\arrow[u,"f\sqcup g"] & A\times I \arrow[r]\arrow[u,"F"] & \Sigma A_+\arrow[u,"\bar F","\cong"']
\end{tikzcd}
\end{equation}
we see that the structure of a right $H^*(C,R)$-module on $H^*(A,R)$ induced by $h\circ f=h\circ g$ is the same as the one induced by $$H^*(A,R)\cong \widetilde{H}^{*+1}(\Sigma A_+,R)\cong \widetilde{H}^{*+1}(C/B,R).$$

In more detail, let $\Sigma:H^*(A,R)\to \widetilde{H}^{*+1}(\Sigma A_+,R)$ denote the suspension isomorphism. Since the lower row of diagram \ref{diagcoeq} induces a long exact sequence of right $H^*(A,R)$-modules, we conclude that $\Sigma$ is a map of right $H^*(A,R)$-modules too. To compare the two $H^*(C,R)$-module structures on $H^*(A,R)$ we need to take elements $a\in H^*(A,R), c\in H^*(C,R)$ and compare $aF^*(c)$ with
$$\Sigma^{-1}\bigg(\bar F^*\Big(\big((\bar F^*)^{-1}(\Sigma a)\big)c\Big)\bigg)=\Sigma^{-1}\big((\Sigma a) F^*(c)\big).$$ The resulting elements of $H^*(A,R)$ coincide because $\Sigma$ is $H^*(A,R)$-linear.
\end{proof}


The following diagram homotopy commutes:
\begin{equation}\label{diagrquot}
\begin{tikzcd}
G\times X \arrow[shift left=.75ex]{r}{p_2}\arrow{r}[shift right=.75ex,swap]{\varphi} &X \arrow{d}{\alpha}\arrow{r}{q_X}& \naive{X}{G} \arrow{r}\arrow{ld}{\gamma'_X}& \Sigma (G\times X)_+ \\
& X_{hG} & &
\end{tikzcd}
\end{equation}
(see Notation \ref{naivetohomotopy}). We introduce $H^*(X_{hG},R)$-module structures on $H^*(G\times X,R)$, $H^*(X,R)$, and $H^*(\naive{X}{G},R)$ as in Lemma~\ref{coeqmod}.


\begin{cor}
\label{functorS}
The homomorphism $S$ is a homomorphism of $H^*(X_{hG},R)$-modules. Moreover, $S$ is a natural transformation of functors from the category of topological $G$-spaces and $G$-equivariant maps to the category of graded abelian groups.
\end{cor}
\begin{proof} By Lemma~\ref{coeqmod} all maps in sequence~\ref{lesnhq} are maps of $H^*(X_{hG},R)$-modules, and hence so is $S$. Finally, the last statement of the corollary, namely that $S$ is a natural transformation, follows from the fact that up to homotopy all maps in diagram~\ref{diagrquot} are.
\end{proof}

\begin{dfn}
\label{equivarianteulerclass}
The homotopy quotient $(\Tot E)_{hG}$ is the total space of of a vector bundle over the quotient $X_{hG}$; we will denote this bundle $E_{hG}$. We define the {\it equivariant Euler class} $e_G(E)$ of $E$ as follows:
\begin{equation*}
e_G(E) = e(E_{hG})\in H^*(X_{hG},R).
\end{equation*}
\end{dfn}
\begin{prop} 
\label{seceulerse}
Suppose that $e(E)=0$, $r=\rk(E)$. 
Then $e_G(E) \in I^r(X,R)$ and for every secondary equivariant Euler class $se_G(E)\in H^{r-1}(G\times X,R)$ the image of $se_G(E)$ in $H^{r-1}(G\times X,R)/M_X^{r-1}$ is $S(e_G(E))$.
\end{prop}
\begin{proof}
We need to show that the image of $S(e_G(E))$ in $H^r(X_{nG},R)$ is $e(\naive{E}{G})$, where $\naive{E}{G}$ is the vector bundle defined in Notation~\ref{nhdescent}. By Definition \ref{mainhomomorphism}, to do this it would suffice to prove that 
$(\gamma'_X)^*(e_G(E))=e(\naive{E}{G}),$
which follows from the fact that
$(\gamma'_X)^*(E_{hG})\cong \naive{E}{G}$ as vector bundles over $\naive{X}{G}$.
\end{proof}
\begin{ntt}
\label{subideal1}
We set $I_1^*(X,R)$ to be the ideal generated by the image of the abelian group $\widetilde{H}^*(BG,R)$ under the homomorphism $\beta^*$. \par
Let us also define the following homomorphism
\begin{align*}
k\colon  \widetilde{H}^*(BG,R) \otimes H^*(X_{hG},R) &  \to \widetilde{H}^*(X_{hG},R), \\
x\otimes y & \mapsto  \beta^*(x) \smile y.
\end{align*}
The image of $k$ is precisely the ideal $I_1^*(X,R)$. 
\end{ntt}


\begin{ntt}
\label{gamma}
Let $\gamma\colon \Sigma G \to BG$ be the adjoint of the homotopy equivalence $G\xrightarrow{\simeq} \Omega BG$, see Example~\ref{underpoint}. Let $\bar\gamma$ be the homomorphism 
\begin{equation*}
H^*(BG,R) \xrightarrow{\gamma^*} H^*(\Sigma G,R)\xrightarrow{} \widetilde{H}^*(\Sigma G_+,R) \xrightarrow{\cong} H^{*-1}(G,R)
\end{equation*}
where the arrow in the middle is induced by $G_+\to G$. Then one can define a homomorphism 
\begin{equation*}
\widetilde{S}\colon \widetilde{H}^p(BG,R) \otimes H^q(X_{hG},R)  \rightarrow \widetilde{H}^{p+q-1}(G\times X,R)
\end{equation*}
{by setting} $x\otimes y \mapsto  \bar\gamma(x)  \times \alpha^*(y)$.
\end{ntt}

\begin{rmk}\label{gamma and transgression}
The homomorphism $\bar\gamma$ is an example of the {\it cohomology suspension} in the special case of the fibre bundle $G\to EG\to BG$,~\cite{S51}*{Chapitre I, n$^\circ$ 3},~\cite{Mc01}*{\S6.2}. In particular, $\bar\gamma\colon H^p(BG,R)\to H^{p-1}(G,R)$ is the right inverse of the transgression $d_p\colon E^{0,p-1}_p \to E^{p,0}_p$ in the Leray-Serre spectral sequence {of this} fibre bundle, i.e.\ the following diagram commutes~\cite{Mc01}*{Proposition 6.10}:
\begin{equation*}
\begin{tikzcd}
H^p(BG,R)\arrow{r}{\bar\gamma} \arrow[d,twoheadrightarrow]
&H^{p-1}(G,R) \\
E^{p,0}_p 
&E^{0,p-1}_p. \arrow{l}{\cong}[swap]{d_p} \arrow[u, hook, swap]
\end{tikzcd}
\end{equation*}
\end{rmk}

We now prove the main statement about the homomorphism $S$, which will allow us to compute it.
\begin{prop}
\label{mainproposition}
The following diagram is commutative:
\begin{equation*}
\begin{tikzcd}[column sep=large]
 \widetilde{H}^*(BG,R) \otimes H^*(X_{hG},R) \arrow{r}{\widetilde{S}} \arrow{d}{k}
&H^{*-1}(G\times X,R) \arrow{d}\\
I^*(X,R) \arrow{r}{S}
&H^{*-1}(G\times X,R)/M^{*-1}_X.
\end{tikzcd}
\end{equation*}
\end{prop}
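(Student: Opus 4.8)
The plan is to reduce the commutativity of the whole square to a single computation of $S$ on the classes $\beta^*(x)$, $x\in\tilde H^*(BG,R)$, and then to evaluate that in the universal case where the base $G$-space is a point. Recall from Notation~\ref{subideal1} that $k(x\otimes y)=\beta^*(x)\smile y$, and note first that $\beta^*(x)\in I^*(X,R)$: indeed $\alpha^*\beta^*=(\beta\alpha)^*=0$ since $\beta\alpha$ is nullhomotopic by fibre sequence~\ref{mainfs}. Since $I^*(X,R)$ is an ideal of $H^*(X_{hG},R)$ and $S$ is a homomorphism of $H^*(X_{hG},R)$-modules by Lemma~\ref{functorS}, I would view $\beta^*(x)\smile y$ as the scalar $y\in H^*(X_{hG},R)$ acting on the module element $\beta^*(x)$ and pull $y$ out of $S$:
\[
S(\beta^*(x)\smile y)=y\cdot S(\beta^*(x))=(\alpha\circ p_2)^*(y)\smile S(\beta^*(x)),
\]
the last equality being the definition of the module structure on $H^{*-1}(G\times X,R)/M_X^*$. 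This reduces the statement to the single identity $S(\beta^*(x))=\gamma^*(x)\boxtimes 1=p_G^*(\gamma^*(x))$, where $p_G\colon G\times X\to G$ is the projection; granting it, the display together with the external-product identity $(\alpha\circ p_2)^*(y)\smile p_G^*\gamma^*(x)=\gamma^*(x)\boxtimes\alpha^*(y)=\tilde S(x\otimes y)$ from Notation~\ref{gamma} gives the diagram.

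To establish $S(\beta^*(x))=p_G^*\gamma^*(x)$ I would invoke naturality. Let $\pi\colon X\to \mathrm{pt}$ be the equivariant collapse to a one-point $G$-space $\mathrm{pt}$. Then $\pi_{hG}\colon X_{hG}\to \mathrm{pt}_{hG}=BG$ is exactly the map $\beta$ of fibre sequence~\ref{mainfs}, and $\mathrm{id}_G\times\pi\colon G\times X\to G$ is $p_G$. Since $S$ is a natural transformation of functors $G\text{-}Sp\to GrAb$ (Lemma~\ref{functorS}), naturality with respect to $\pi$ yields $S_X\circ\beta^*=p_G^*\circ S_{\mathrm{pt}}$ on $I^*(\mathrm{pt},R)=\tilde H^*(BG,R)$. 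Thus everything is reduced to the universal computation $S_{\mathrm{pt}}=\gamma^*$.

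The genuine obstacle is this universal case. For $X=\mathrm{pt}$ we have $G\times X=G$ and $M_{\mathrm{pt}}=0$, because the two structure maps $G\to\mathrm{pt}$ coincide; moreover $H^{>0}(\mathrm{pt},R)=0$, so long exact sequence~\ref{lesnhq} shows that $\delta\colon H^{*-1}(G,R)\to\tilde H^*(\mathrm{pt}*_G \mathrm{pt},R)$ is an isomorphism. By Proposition~\ref{nhqjoin} the equivalence $\kappa$ identifies $\mathrm{pt}*_G \mathrm{pt}$ with the fibrewise join $\mathrm{pt}*_{BG}\mathrm{pt}$ over $\alpha\colon \mathrm{pt}\to BG$, which by Example~\ref{underpoint} is $S^1\vee\Sigma\Omega BG\simeq\Sigma(G_+)$; and by Example~\ref{qunderpoint} the map $\alpha*\alpha$ is the pinch map followed by the counit $\Sigma\Omega BG\to BG$, which is precisely the map $\gamma$ of Notation~\ref{gamma}. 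Hence for $x\in\tilde H^*(BG,R)$ the class $\kappa^*(\alpha*\alpha)^*(x)$ is $\gamma^*(x)$ sitting in the $\Sigma\Omega BG=\Sigma G$ wedge summand. I would then finish by verifying, via the Puppe-map description of $\delta$ in Remark~\ref{puppe}, the concrete equality
\[
\delta(\gamma^*(x))=\kappa^*(\alpha*\alpha)^*(x),
\]
which by Definition~\ref{mainhomomorphism} says exactly $S_{\mathrm{pt}}(x)=\gamma^*(x)$.

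The delicate bookkeeping, and where I expect the real work to lie, is twofold: first, matching the connecting map $\delta$ of Remark~\ref{puppe} with the suspension coordinate of the wedge decomposition of Example~\ref{underpoint} under the equivalence $\kappa$; and second, controlling the Koszul signs. These arise when reordering $\beta^*(x)\smile y$ into $y\smile\beta^*(x)$ to apply module linearity, and again in the graded-commutativity rule for the cross product used to rewrite $(\alpha\circ p_2)^*(y)\smile p_G^*\gamma^*(x)$ as $\gamma^*(x)\boxtimes\alpha^*(y)$. I would check that these two sign contributions cancel, as is forced by the conventions fixed in Notation~\ref{gamma} and Definition~\ref{indeterminacy submodule}, so that the square commutes on the nose.
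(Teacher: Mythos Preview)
Your proposal is correct and follows essentially the same approach as the paper: reduce via $H^*(X_{hG},R)$-linearity to the identity $S(\beta^*(x))=\gamma^*(x)\boxtimes 1$, then use naturality together with the identification of $\mathrm{pt}*_{BG}\mathrm{pt}$ with $\Sigma(G_+)$ from Examples~\ref{underpoint} and~\ref{qunderpoint} to compare $\delta$ with $\gamma^*$. The only organizational difference is that you first reduce to $X=\mathrm{pt}$ via the naturality of $S$ (Lemma~\ref{functorS}) and compute there, whereas the paper keeps $X$ general and instead invokes the naturality of fibrewise joins (Proposition~\ref{naturalityfastf}) to produce a map $X*_G X\to\Sigma G$ directly; the resulting diagram chase and the final matching of $\delta$ with the Puppe map are the same in both.
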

%
\begin{proof} 
Since all homomorphisms in the diagram are maps of $H^*(X_{hG},R)$-modules,
in order to prove the proposition it would suffice to show that for every $x\in \widetilde{H}^*(BG,R)$ the image of $\widetilde{S}(x\otimes 1)$ in $H^{*-1}(G\times X,R)/M^{*-1}_X$ is
\begin{equation*}
S(k(x\otimes 1))=S(\beta^*(x)).
\end{equation*}
This amounts to checking that
\begin{equation}
\label{sufficient}
\delta(\widetilde{S}(x\otimes 1))=(\gamma'_X)^*(\beta^*(x)) 
\end{equation}
for all $x\in \widetilde{H}^*(BG,R)$, see Definition \ref{mainhomomorphism}. \par
Since the map $\gamma'_X\colon \naive{X}{G} \to X_{hG}$ is natural with respect to $X$, we get the homotopy commutative diagram:
\begin{equation*}
\begin{tikzcd}\label{auxdiag}
\naive{X}{G}\arrow{r}{\varepsilon} \arrow{d}{\gamma'_X}
&\naive{pt}{G} \arrow{d}{\gamma'_{pt}} \\
X_{hG} \arrow{r}{\beta}
&BG
\end{tikzcd}
\end{equation*}
Recall that the space $\naive{pt}{G}$ is homotopy equivalent to $\Sigma (G_+)\simeq S^1\vee\Sigma G$; the map $\gamma'_{pt}:\naive{pt}{G}\to BG$ was described in Example~\ref{underpoint}. We denote the resulting map $\Sigma G_+\to BG$ by $\gamma'$ dropping the subscript.
%

Let $\Delta\colon \naive{X}{G} \to \Sigma(G\times X)_+$ be the connecting morphism \ref{puppemap} in the Puppe sequence for the coequaliser diagram of $\varphi$ and $p_2$ (see Definition~\ref{nhq}) and let $p_1\colon G\times X \to G$ be the projection map. Recall that the induced homomorphism
\begin{equation*}
H^{*-1}(G\times X,R)\xrightarrow{\cong} \widetilde{H}^*(\Sigma(G\times X)_+,R) \xrightarrow{\Delta^*} H^*(\naive{X}{G},R) 
\end{equation*}
coincides with the connecting homomorphism $\delta$ in exact sequence~\ref{lesnhq}. Furthermore, using~\ref{auxdiag} we get the commutative diagram
\begin{equation*}
\begin{tikzcd}[column sep=large]
&\widetilde{H}^*(\Sigma(G\times X)_+,R) \arrow{dl}[swap]{\Delta^*} & H^{*-1}(G\times X,R)\arrow{l}[swap]{\cong}\\
H^*(\naive{X}{G},R) & \widetilde{H}^*(\Sigma G_+,R) \arrow{u}[swap]{(\Sigma p_{1+})^*} \arrow{l}[swap]{ \varepsilon^*}  & H^{*-1}(G,R)\arrow{u}[swap]{p_1^*}\arrow{l}[swap]{\cong}\\
H^*(X_{hG},R) \arrow{u}{(\gamma'_X)^*} &\widetilde{H}^*(BG,R)\arrow{l}[swap]{\beta^*}\arrow{ur}[swap]{\bar\gamma}\arrow{u}[swap]{(\gamma')^*}.&
\end{tikzcd}
\end{equation*}
Here the upper left triangle commutes, because the map $\Delta$ is
natural with respect to $X$.
We can now apply the diagram to an $x\in \widetilde{H}^*(BG,R)$ and get
\begin{equation*}
\delta(\widetilde{S}(x\otimes 1)) =\delta(\bar\gamma(x)\times 1)=
(\gamma'_X)^*(\beta^*(x)).
\end{equation*}
\end{proof}
\begin{ntt}
\label{weakmainhomomorphism}
Let $S_1\colon I_1^*(X,R) \to H^{*-1}(G\times X,R)/M^{*-1}_X$ be the restriction of $S$ to $I_1^*(X,R)$.
\end{ntt}
\begin{rmk}
\label{rmkSS1}
For our main results we need to calculate the homomorphism $S$. There does not seem to be an easy way to do this in general, but Proposition~\ref{mainproposition} gives us an explicit formula for $S_1$. Indeed, if an element $x$ belongs to $I_1^*(X,R)$, then $x=\sum \beta^*(a_i) b_i$ with $a_i \in \widetilde{H}^*(BG,R)$ and $b_i\in H^*(X_{hG},R)$, so using Proposition~\ref{mainproposition} we conclude that $S(x)=S_1(x)$ is the image of
\begin{equation}\label{formulas1}
\widetilde{S}\left(\sum a_i\otimes b_i\right)=\sum \bar\gamma(a_i) \times \alpha^*(b_i)\in H^{*-1}(G\times X,R).
\end{equation}
in $H^{*-1}(G\times X,R)/M^{*-1}_X$.

Moreover, in Lemma~\ref{s1sproj} we will see that if $X$ is a smooth compact complex algebraic variety, the ideals $I^*_1(X,\Q)$ and $I^*(X,\Q)$ coincide.
\end{rmk}

We will now discuss functorial properties of $S$. As we already saw in Corollary~\ref{functorS}, the homomorphism $S\colon I^*(X,R)\to H^{*-1}(G\times X,R)/M^{*-1}_X$ is natural with respect to $G$-equivariant morphisms of topological spaces. More generally, if $g:G_1\to G_2$ is a continuous homomorphism of topological groups, $X_i,i=1,2$ is a $G_i$-space, and $f:X_1\to X_2$ is a continous map that is equivariant with respect to $g$, then the following diagram commutes.
\begin{equation}\label{functorialityofs}
\begin{tikzcd}[column sep=large]
I^*(X_2,R) \arrow{r}{S} \arrow{d}{f^*}
&H^{*-1}(G_2\times X_2,R)/M^{*-1}_{X_2} \arrow{d}{(g\times f)^*}\\
I^*(X_1,R) \arrow{r}{S}
&H^{*-1}(G_1\times X_1,R)/M^{*-1}_{X_1}.
\end{tikzcd}
\end{equation}
Observe that the homomorphism $S_1$ is also natural, i.e.\ if one replaces in the diagram above $S$ with $S_1$ and $I$ with $I_1$, the resulting diagram will commute.

We also note the following property of the homomorphism $S$.
\begin{prop} 
\label{kernel}
If $x\in I^*(X,R)^2\subset H^*(X_{hG},R)$, then $S(x)=0$.
\end{prop}
\begin{proof}
Let $y,z$ be elements of $I^*(X,R)$. It is enough to show that $(\gamma'_X)^*(y\smile z) = 0$. But $(\gamma'_X)^*(y)$ and $(\gamma'_X)^*(z)$ are in the image of $\delta$ in exact sequence~\ref{lesnhq}. The product of any two elements in the image of $\delta$ is zero, and the proposition follows.
\end{proof}

\subsection{Smooth compact complex algebraic varieties}\label{secsmoothproj}
In this subsection $X$ will be a smooth {compact} complex algebraic variety and~$G$ will be a connected complex affine group that acts algebraically on $X$. To begin with, we show that under these assumptions the homomorphisms $S$ and $S_1$ coincide at least rationally.

\begin{lmm}\label{s1sproj}
The Leray-Serre spectral sequence of the fibration $X\to X_{hG}\to BG$ degenerates at $E_2$ over~$\Q$. As a consequence $I^*(X,\Q)= I_1^*(X,\Q)\cong H^*(X,\Q)\otimes\tilde H^*(BG,\Q)$, and so in rational cohomology we have $S=S_1$.
\end{lmm}
\begin{proof}
The group $G$ is a closed subgroup of some $GL_m(\Co)$, so up to homotopy equivalence $BG$ is a direct limit $\varinjlim B_i$, where each $B_i$ is a smooth complex algebraic variety. The weight filtration of $H^p(B_i,\Q)$ starts in weight $p$, i.e.\ $W_{p-1}(H^p(B_i,\Q))=0$, and $H^q(X,\Q)$ is pure of weight $q$. The degeneration of the spectral sequence now follows from Hodge theory, cf.\ \cite{Deligne1974}. 
\end{proof}
\begin{lmm} 
\label{actproj}
Recall that above we set $\vp\colon G\times X \to X$ to be the $G$-action morphism. The homomorphisms $\vp^*, p_2^*\colon H^*(X,\Q) \to H^*(G\times X,\Q)$ coincide.
\end{lmm}
\begin{proof}
The lemma follows from these observations: first, the group $H^p(X,\Q)$ is pure of weight $p$, whereas $W_q(H^q(G,\Q))=0$ unless $q=0$. So the image of both $\varphi^*$ and $p_2^*$ is inside $H^0(G,\Q)\otimes H^*(X,\Q)$. Secondly, both $\varphi$ and $p_2$ when composed with the map $x\mapsto (e,x)$ from $X$ to $G\times X$ give the identity of $X$.
\end{proof}
\begin{cor}
\label{indettorsion}
The integral indeterminacy submodule $M^*_X\subset H^*(G\times X,\Z)$ consists of torsion elements.
\end{cor}
\begin{proof}
It follows from the definition of $M^*_X$ that $M_X^*\otimes \Q$ is the image of the homomorphism
\begin{equation*}
\vp^* -p_2^* \colon H^*(X,\Q) \to H^*(G\times X,\Q),
\end{equation*}
which is zero by Lemma \ref{actproj}.
\end{proof}

\begin{cor}
\label{seceulersub}
Let $E$ be a $G$-equivariant $R$-oriented vector bundle over $X$ with $e(E)=0$. Then for every secondary equivariant Euler class $se_G(E)$ the following identity holds in $H^*(G\times X,\Q)$:
\begin{equation*}
se_G(E)=S(e_G(E))=S_1(e_G(E)). 
\end{equation*}
\end{cor}
\begin{proof}
This follows from Propositions~\ref{seceulerse} and~\ref{s1sproj}.
\end{proof}

\begin{cor}
\label{integerlatticeandS}
The following diagram commutes
$$
\begin{tikzcd}[column sep=large]
I^*(X,\Z) \arrow{r} \arrow{d}{S}
&I^*(X,\Q) \arrow{d}{S} 
&I_1^*(X,\Q) \arrow{d}{S_1}\arrow[swap]{l}{=} \\
\fr{*-1}(G\times X,\Z) \arrow[hookrightarrow]{r}
&H^{*-1}(G\times X,\Q)
&H^{*-1}(G\times X,\Q) \arrow[swap]{l}{=}.
\end{tikzcd}
$$\qed
\end{cor}

\begin{rmk}\label{imagebargammaprimitive}
{It follows from Remark~{\ref{gamma and transgression}} that} the image of $\bar\gamma$ in $H^*(G,\Q)$ is the graded vector space $P^*_{\Q}$ of the primitive elements of $H^*(G,\Q)$. This vector space is concentrated in odd degrees and freely generates $H^*(G,\Q)$ as a graded commutative $\Q$-algebra. It will be important for us in the sequel that the integral analogue of this is also true: the graded group $P^*=P^*_{\Q}\cap\fr{*}(G,\Z)$ of the primitive elements of $\fr{*}(G,\Z)$ freely generates $\fr{*}(G,\Z)$ as a graded commutative ring, see e.g.\ \cite{MT91}*{Chapter~VII, Theorem 1.22}. Note that $P^*$ is a direct summand of $\fr{*}(G,\Z)$ and that $\rk P^{2l-1}=\dim_{\Q}P^{2l-1}_{\Q}$ is the number of free polynomial generators of $H^*(BG,\Q)$ of degree $2l$.

We also note for the sequel that if $G$ is complex affine, then every element of $P^*_{\Q}$ has weight equal the degree plus 1; so every decomposable element of $H^*(G,\Q)$ has weight equal the degree plus 2 or greater.
\end{rmk}

\begin{rmk}\label{imageofs}
It follows from the previous remark, Lemma~\ref{s1sproj} and formula \ref{formulas1} that the image of $S$ in $H^*(G\times X,\Q)/M^*_X=H^*(G\times X,\Q)$ is $P^*_{\Q}\otimes H^*(X,\Q)$.
\end{rmk}

\begin{rmk}
By Remark~\ref{imagebargammaprimitive}, over the rationals the kernel of $\bar\gamma$ {in positive degrees} is the space of decomposable elements of $\tilde H^*(BG,\Q)$. One can prove that the kernel of $S:I(X,\Q)\to H^*(G\times X,\Q)$ is $(I(X,\Q))^2=(I_1(X,\Q))^2$. We will not use the latter statement in the sequel.
\end{rmk}

\section{Discriminant complements and orbit maps}\label{sectdiscrim}
In this section we state and prove our main results. 
As above, let $G$ be a topological group. Given a topological $G$-space $X$ and a $G$-equivariant oriented vector bundle $E$ over $X$ with Euler class 0, we use any secondary Thom class of $E$ to define the cohomology classes~$\Lk^{\Sigma}(y)\in H^*(C^0(X,E)_0,\Z)$ (see Definition~\ref{linkingclasshomomorphism}); here $y$ is an arbitray class $\in H_*(X,\Z)$ and $C^0(X,E)_0$ denotes the space of nowhere vanishing continuous sections of $E$. 
Furthermore, we explain how to calculate $O^*(\Lk^{\Sigma}(y))\in H^*(G,\Z)$ in terms of the equivariant Euler class of~$E$ (Corollaries~\ref{prepmaincorollary} and \ref{maincorollary}).

As an application, suppose that $X$ is a smooth complex analytic variety, $E$ is holomorphic and $G$ acts on $X$ and $E$ by biholomorphic transformations; now we no longer assume $e(E)=0$. In Section \ref{secsmoothsec} we define the space $\reg{X,E}\subset \Gamma(X,E)$ of regular sections of $E$ and use our results from Section~\ref{linkingclasssubsect} to define the classes~$\Lk^{\Sing}(y)\in H^*(\reg{X,E},\Z)$ (Definition~\ref{linkingclasssing}) and calculate their pullbacks under the orbit map $O^*\colon H^*(\reg{X,E})\to H^*(G)$ (Theorem~\ref{maintheorem} and Corollary~\ref{maincorollary2}).  In Propositions~\ref{geometricdescription1} and \ref{geometricdescription2} we explain the geometric motivation behind $\Lk^{\Sigma}(y)$ and $\Lk^{\Sing}(y)${, and in Propositions~{\ref{mhsmotivation}} and {\ref{mhsmotivation2}} we relate these classes to the weight filtration}.

\subsection{Linking class homomorphism: spaces of nowhere vanishing sections}\label{linkingclasssubsect}
Unless stated otherwise, in Section \ref{linkingclasssubsect} $X$ is a topological space and $E$ is a real vector bundle over $X$ of rank $r$.
\begin{ntt}
\label{sigma}
We set $C^0(X,E)$ to be the vector space of continuous sections of $E$. Let $V$ be a subset of $C^0(X,E)$ and set
\begin{equation*}
\Sigma_{V}(E)= \{s\in V\mid \mbox{ there is an $x \in X$ such that $s(x)=0$}\}.
\end{equation*}
Let $V_0$ be the complement of $\Sigma_V(E)$ in $V$. We define the {\it evaluation map}
\begin{align}
\label{evaluationmap}
ev_V\colon V_0 \times X &\to \Tot_0(E), \mbox{ } ev_V(s,x)= s(x). \nonumber
\end{align}
\end{ntt}
In the sequel we assume that $V$ has been equipped with a topology that makes the evaluation map $ev_V$ continuous.

\begin{exmp}
If $X$ is locally compact, then we can take $V=C^0(X,E)$ with the compact-open topology. (If $X$ is not assumed locally compact, then $ev_V$ is still continuous if one replaces all topological spaces involved with their compactly generated versions.)
\end{exmp}

\begin{exmp}
Let $X$ be a smooth manifold and let $E$ be a smooth vector bundle over $X$. Then one can take $V=$ the vector space of smooth sections $C^{\infty}(X,E)$ with the Whitney topology.
\end{exmp}
\begin{exmp}
Let $X$ be a compact complex analytic variety and let $E$ be a holomorphic vector bundle over $X$. Then one can take $V=$ the space of holomorphic sections $\Gamma(X,E)$. 
\end{exmp}

In the rest of Section \ref{linkingclasssubsect} we suppose that $E$ is $\Z$-oriented and the Euler class $e(E)\in H^r(X,\Z)$ is zero. Then there exists an oriented secondary Thom class $a_E\in H^*(\Tot_0,\Z)$~(Definition~\ref{secthom}).
\begin{dfn}
\label{linkingclasshomomorphism}
We define the {\it linking class homomorphism} $\Lk^\Sigma_V$ using the following rule:
\begin{align*}
\Lk^\Sigma_V\colon H_p(X,\Z) &\to H^{r-p-1}(V_0,\Z), \\
\Lk^\Sigma_V(y)&= -(ev^*_V(a_E))/y.
\end{align*}
Here $/y\colon H^{r-1}(V_0\times X,\Z) \to H^{r-p-1}(V_0,\Z)$ denotes the slant product homomorphism with the homology class $y\in H_p(X,\Z)$; see e.g.\ \cite{Sp81}*{Chapter 6, \S 1} for details. {Elements of the image of $\Lk^\Sigma_V$ will be called} {\it{linking classes}}.  
\end{dfn}
\begin{rmk}
{We use the minus sign here so as to eliminate signs from the formula in our main theorem~{\ref{maintheorem}}}.
%
\end{rmk}
\begin{prop}
\label{linkingclasshomomorphismwelldefined}
If $p\neq r-1$, then the homomorphism $\Lk^\Sigma_V$ is well defined, i.e.\ it does not depend on the choice of the oriented secondary Thom class.
\end{prop}
\begin{proof}
Indeed, by Proposition~\ref{indsecthom} any two oriented secondary Thom classes $a_E$ and $a'_E$ differ by $p_0^*(b)\in H^{r-1}(\Tot_0(E))$ for some $b\in H^{r-1}(X)$ where $p_0\colon \Tot_0(E) \to X$ is the projection map. Therefore, 
\begin{equation*}
ev_V^*(a_E) -ev^*_V(a'_E)\in \im p_2^* \subset H^{r-1}(V_0\times X).
\end{equation*}
Here $p_2\colon V_0 \times X\to X$ is the projection map. But if $y\in H_p(X)$ and $p\neq r-1$, then $(\im p_2^*)/y=0$.
\end{proof}

Here are some basic properties of the linking class homomorphism.

\begin{prop}
\label{lcnaturality2}

Let $f\colon X' \to X$ be a continuous map, and let $E$ be a vector bundle over $X$ with $e(E)=0$. We let $C^0f\colon C^0(X,E) \to C^0(X', f^*E)$ be the induced linear map between the vector spaces of global sections. Let $V\subset C^0(X,E)$ and $V'\subset C^0(X',f^*E)$ be subsets topologised so that $ev_V$ and $ev_{V'}$ are continuous, $C^0f(V) \subset V'$ and $C^0 f$ is continuous on $V$. Then the following diagram commutes:

\begin{equation*}
\begin{tikzcd}
H_p(X',\Z) \arrow{r}{\Lk^\Sigma_{V'}} \arrow{d}[swap]{f_*}
&H^{r-p-1}(V'_0,\Z)\arrow{d}{(C^0f)^*} \\
H_p(X,\Z) \arrow{r}{\Lk^\Sigma_V}
&H^{r-p-1}(V_0,\Z).
\end{tikzcd}
\end{equation*}

\end{prop}

\begin{proof}
We use the commutative diagram
\begin{equation*}
\begin{tikzcd}[row sep=tiny]
 & X'\times V_0'\arrow{r}{ev_{V'}} & \Tot_0 (E')\arrow{dd}\\
 X'\times V_0 \arrow{ru}{\id\times C^0f}\arrow[rd,"f\times\id"'] & & \\
 & X\times V_0\arrow{r}{ev_{V}} & \Tot_0 (E).
 \end{tikzcd}
 \end{equation*}
\end{proof}
\begin{cor}
\label{lcnaturality1}
Let $E$ be a vector bundle over $X$ with $e(E)=0$, and let $V\subset V'$ be subsets of $C^0(X,E)$ topologised so that $ev_V, ev_{V'}$ and the inclusion $i:V\to V'$ are continuous. Then the following diagram commutes:

\begin{equation*}
\begin{tikzcd}
H_p(X,\Z) \arrow{r}{\Lk^\Sigma_{V'}} \arrow{rd}[swap]{\Lk^\Sigma_{V}}
&H^{r-p-1}(V'_0,\Z)\arrow{d}{i^*} \\
&H^{r-p-1}(V_0,\Z).
\end{tikzcd}
\end{equation*}\qed
\end{cor}

We will now give a geometric interpretation for the classes $\Lk^\Sigma_V$. To do this we need a technical result, Proposition~\ref{twodefinitionsoflinkingclasses}.

If $M$ is an oriented (but not necessarily compact) topological manifold of dimension $m$ without boundary, then the {\it Poincar\'e-Lefschetz duality} for $M$ is the isomorphism $H^*(M,\Z)\to H_{m-*}^{BM}(M,\Z)$ given by the $\frown$-product with the fundamental class $[M]\in H_{m}^{BM}(M,\Z)$. If $Y\subset\R^m$ is a (closed) sub-polyhedron, then by composing the Poincar\'e-Lefschetz duality for $\R^m\setminus Y$ with the connecting homomorphism $H^{BM}_*(\R^m\setminus Y,\Z)\to H_{*-1}^{BM}(Y,\Z)$ in the Borel-Moore homology long exact sequence of the couple $(\R^m,Y)$ we get the {\it Alexander duality isomorphism} $\tilde H^*(\R^m\setminus Y,\Z)\cong H^{BM}_{m-*-1}(Y,\Z)$.

Let $E$ be an oriented vector bundle with $e(E)=0$ over an oriented smooth closed manifold $X$ of dimension $m$, and let $V$ be a finite-dimensional vector subspace of $C^\infty(X,E)$ that {\it generates} $E$ at each point of $X$, i.e.\ the evaluation map $V\times X\to\Tot(E)$ is surjective. In this case the following subset of $V\times X$ {(cf.\ Proposition~{\ref{degreediscr}})}
\begin{equation*}
\widetilde \Sigma_V(E)=\{(s,x)\in V\times X\,\,\, \vert\,\,\, s(x)=0\}
\end{equation*}
is an oriented vector bundle over $X$; let $n$ be the rank of this bundle.

Let $p_X:V\times X\to X$ and $p_V:V\times X\to V$ be the projections. Under our assumptions there is a fundamental class $[\widetilde{\Sigma}_V(E)]\in H^{BM}_{m+n} (\widetilde{\Sigma}_V(E),\Z)$, and the {\it Thom isomorphism} $u:H_*(X,\Z)\to H^{BM}_{*+n}(\widetilde{\Sigma}_V(E),\Z)$ is given by $u(y)=p_X^*(Dy)\frown [\widetilde{\Sigma}_V(E)]$ where $Dy$ is the Poincar\'e dual cohomology class of $y$.

There are two maps $H_*(X,\Z)\to H_{*+n}^{BM}(\Sigma_V(E),\Z)$. One is given by $$y\mapsto (p_V)_*(u(y))= (p_V)_*(p_X^*(Dy)\frown [\widetilde{\Sigma}_V(E)]),$$ and the other map is
$$y\mapsto D^A(ev(a_E)/y)$$ where $D^A:\tilde H^*(V_0,\Z)\to \bm{\dim V-*-1}(\Sigma_V(E),\Z)$ is the Alexander duality isomorphism.

\begin{prop}\label{twodefinitionsoflinkingclasses}
In each degree these two maps coincide up to sign.
\end{prop}

\begin{proof}
We will use $\partial$ to denote the connecting homomorphisms in the long exact sequences of the Borel-Moore homology groups. The evaluation map
$$ev:V_0\times X\to \Tot_0(E)$$
extends to $(V\times X)\setminus\widetilde{\Sigma}_V(E)$. Set $v_E\in H_*^{BM}((V\times X)\setminus\widetilde{\Sigma}_V(E),\Z)$ to be the Poincar\'e-Lefschetz dual of $ev^* (a_E)$. We have $\partial(v_E)=[\widetilde{\Sigma}_V(E)]$ in the Borel-Moore long exact sequence of the couple $(V\times X,\widetilde{\Sigma}_V(E))$. This is a long exact sequence of $H^*(X,\Z)$-modules, so for every $z\in H^*(X,\Z)$ we have $\partial (p_X^*(z)\frown v_E)=p_X^*(z)\frown [\widetilde{\Sigma}_V(E)]$. The map 
$p_V:(V\times X,\widetilde{\Sigma}_V(E))\to (V,\Sigma_V(E))$
is proper, so we have $$\partial((p_V)_*(p_X^*(z)\frown v_E))=(p_V)_*(p_X^*(z)\frown [\widetilde{\Sigma}_V(E)]),$$
which implies that the Alexander dual $\in H^*(V_0,\Z)$ of $(p_V)_*(p_X^*(z)\frown [\widetilde{\Sigma}_V(E)])$ is the Poincar\'e-Lefschetz dual of $(p_V)_*(p_X^*(z)\frown v_E)$. Let us give another expression for the latter class.

Set $v'_E$ to be the restriction of $v_E$ to $V_0\times X$. We have then $v'_E=ev^*(a_E)\frown ([V_0]\times [X])$, so
$$p^*_X(z)\frown v_E'=(p^*_X(z) \smile ev^*(a_E))\frown ([V_0]\times [X]) =\pm ev^*(a_E)\frown ([V_0]\times y)$$ where $y=z\frown [X]$ is the Poincar\'e dual of $z$. Using e.g.\ \cite{Switzer75}*{Proposition 13.61, part (vi)} we get
$$(p_V)_*(p_X^*(z)\frown v_E)=(p_V)_*(p_X^*(z)\frown v'_E)=\pm (ev^*(a_E)/y)\frown [V_0]),$$ which is $\pm$ the Poincar\'e-Lefschetz dual of $ev^*(a_E)/y$.
\end{proof}

\bigskip

For the next two propositions we suppose that $X$ is a smooth {compact} complex algebraic variety of (complex) dimension $d$ and $E$ is a globally generated holomorphic vector bundle of (complex) rank $r>d$. Then $e(E)=0$, and we set $V=\Gamma(X,E)\subset C^0(X,E)$ and $N=\dim_{\Co} V$. By our assumptions $\widetilde{\Sigma}_V(E)\to X$ is a vector bundle of rank $N-r$. We will now explain how the linking class homomorphism is related to {actual} linking numbers. 

Suppose $y\in H_{2p}(X,\Z)$ is represented by an irreducible subvariety $Y\subset X$ of dimension $p$. Let {$\Sigma_V^Y(E)\subset \Sigma_V(E)$} denote the {irreducible subvariety formed by} all sections that vanish at some point of $Y$. There exists a fundamental class $[\Sigma_V^Y(E)]$ in the Borel-Moore homology of $\Sigma_V(E)$, and by the Alexander duality this class gives us a cohomology class $\Lk^{\Sigma}_Y \in H^*(\nonzero{X,E})$ that can be viewed as the ``linking number class of $\Sigma_V^Y(E)$'', as the value of $\Lk^{\Sigma}_Y$ on a cycle $c\subset V_0$ is the linking number of $c$ and $\Sigma_V^Y(E)$ inside~$V$.

\begin{prop}
\label{geometricdescription1}
Set $\widetilde\Sigma_V^Y(E)=p_X^{-1}(Y)\subset \widetilde\Sigma_V(E)$. Suppose that $p_V\colon \widetilde \Sigma^Y_V(E) \to \Sigma_V^Y(E)$ is generically finite of degree $k$. Then $\Lk^{\Sigma}_V(y) = -k\Lk^{\Sigma}_Y$.
\end{prop}

\begin{proof}
Note that $u(y)$ is represented by $\widetilde \Sigma^Y_V(E)$. Hence $(p_V)_{*}(u(y))=(p_V)_{*}[\widetilde \Sigma^Y_V(E)]$ is the $k$ times the fundamental class of the set-theoretical image $p_V(\widetilde \Sigma^Y_V(E))=\Sigma^Y_V(E)\subset \Sigma_V(E)$. 
\end{proof}

\begin{rmk}
Let $Z(s)\subset X$ be the zero locus of $s\in C^0(X,E)$. The number $k$ of Proposition~\ref{geometricdescription1} can be interpreted as $|Z(s)\cap Y|$ for a general section $s \in \Sigma_V(E)$. This number is finite if and only if $\langle c_p(E), [Y]\rangle \neq 0$, see Proposition~\ref{degreediscr}.
\end{rmk}

\begin{rmk}
If $p_V\colon \widetilde \Sigma^Y_V(E)\to \Sigma_V^Y(E)$ is not generically finite then $(p_V)_{*}(u(y))=0$ and $\Lk^{\Sigma}_V(y)=0$.
\end{rmk}

Proposition~\ref{twodefinitionsoflinkingclasses} also allows one to interpret the linking class homomorphism in terms of Hodge theory. The topological space $\nonzero{X,E}$ is a smooth complex algebraic variety, so its cohomology groups $H^*(\nonzero{X,E},\Q)$ carry a mixed Hodge structure. As $\nonzero{X,E}$ is an open subset of an affine space, we have $W_p H^p(\nonzero{X,E},\Q)=0$ for all $p>0$, and it is natural to ask what the next term $W_{p+1} H^p(\nonzero{X,E},\Q)$ of the weight filtration looks like.

\begin{prop}
\label{mhsmotivation}
If $2r-p-1>0$, then the image $\im(\Lk^{\Sigma}_V)$ of the linking class homomorphism 
$$\Lk^{\Sigma}_V\colon H_p(X,\Q)\to H^{2r-p-1}(\nonzero{X,E},\Q)$$
coincides with $W_{2r-p}H^{2r-p-1}(\nonzero{X,E},\Q)$.
\end{prop}

\begin{proof}
Set $p'=p+2N-2r$. Since the Alexander duality isomorphism is compatible with mixed Hodge structures (up to Tate twists), it is enough to prove that the image of $(p_V)_{*}\circ u$ coincides with $W_{-p'}\bm{p'}(\Sigma_V(E),\Q)$. Indeed, $u\colon H_p(X) \xrightarrow{\cong} \bm{p'}(\widetilde\Sigma_V(E))(r-N)$ is an isomorphism of Hodge structures of pure weight $-p$, and $(p_V)_{*}\colon \bm{p'}(\widetilde\Sigma_V(E)) \to \bm{p'}(\Sigma_V(E))$ is a surjection on $W_{-p'}$, which follows e.g.\ by~\cite{PS08}*{Theorem~5.41}.
\end{proof}

\bigskip

In the rest of Section \ref{linkingclasssubsect}, $G$ will be a topological group acting continuously on $X$, and the vector bundle $E$ will be assumed $G$-equivariant; recall that we also assume $e(E)=0$. We wish to calculate the pullbacks of the cohomology classes $\Lk_V^\Sigma(y)$ under orbit maps. 
As in Section \ref{1stsec}, we let $p_2\colon G\times X\to X$ and $P_2\colon G\times\Tot_0(E)\to\Tot_0(E)$ be the projections, and we denote the $G$-actions on $X$ and $\Tot(E)$ by $\vp\colon G\times X \to X$ and $\Phi \colon G\times \Tot(E) \to \Tot(E)$ respectively. 
Let $V$ be a $G$-stable vector subspace of $C^0(X,E)$.

\begin{exmp}\label{holo}
If $G,X,E$ and the actions of $G$ on $X$ and $E$ are complex analytic, and $X$ is compact, then one can take $V=$ the vector space $\Gamma(X,E)$ of holomorphic sections.
\end{exmp}

Suppose that $V_0$ is non-empty and take a section $s \in V_0$. Recall that the {\it orbit map} is the map $O\colon G \to V_0$ given by $O(g) = g(s)$ (see Introduction).

\begin{rmk}\label{orbitmapshomotopic} If $V_0$ is path connected (e.g.\ when $G,X,E$ are as in Example \ref{holo}), then all orbit maps are homotopic. Indeed, let $\gamma:I\to V_0$ be a path in $V_0$. Then the orbit maps constructed using $s_0=\gamma(0)$ and $s_1=\gamma(1)$ are homotopic via the homotopy that takes $(g,t)\in G\times I$ to $g\gamma(t)$.
\end{rmk}

In the rest of the subsection $s_0$ will be a fixed element of $V_0$ and $O$ will be the orbit map constructed using~$s_0$. The following commutative diagram will be crucial for us. 
\begin{equation}
\label{cdevact}
\begin{tikzcd}[column sep=large]
G\times \{s_0\}\times X \arrow{r}{\iota}
& G\times V_0\times X \arrow{r}{\id \times ev_V} \arrow{d}[swap]{A}
&G\times \Tot_0(E) \arrow{d}{\Phi}\\
&V_0\times X \arrow{r}{ev_V}
&\Tot_0(E).
\end{tikzcd}
\end{equation}
Here the map $A\colon G\times V_0\times X \to V_0\times X$ is given by $(g,s,x )\mapsto (gs,gx)$, and $\iota$ denotes the inclusion. 

\begin{ntt}\label{decq}
Let ${{\dec}}^i_{\Q} (G)$ be the subgroup of $H^i(G,\Z)$ that consists of all elements that become decomposable in $H^*(G,\Q)$. We will denote this subgroup by $\dec^i_{\Q}$ when this is unlikely to lead to confusion.
\end{ntt}

\begin{prop}
\label{linkingsecthom}
If $y\in H_p(X,\Z)$, then for every oriented secondary Thom class $a_E \in H^{r-1}(\Tot_0 E,\Z)$ the following identity holds in $H^{r-1-p}(G,\Z)$ modulo ${{\dec}}^{r-1-p}_{\Q} (G)$:
\begin{equation}\label{linkingsecthomeq}
O^*(\Lk^{\Sigma}_V(y)) = -(ev_V \circ A \circ \iota)^*(a_E)/y + (s_0\circ\varphi)^*(a_E)/y.
\end{equation}
\end{prop}
\begin{proof}
Notice that the following diagram commutes
\begin{equation}\label{maindiagmainlemma}
\begin{tikzcd}
G\times \{s_0\}\times X \arrow{rr}{\iota} \arrow{d}[swap]{\id \times O\times \id}
&&G\times V_0 \times X \arrow{d}{A}\\
G \times V_0\times X \arrow{r}
& V_0 \times G\times X \arrow{r}{\id\times \vp}
&V_0 \times X
\end{tikzcd}
\end{equation}
where the left arrow in the bottom row transposes the first two factors. Take a $b\in H^{r-1}(V_0\times X,\Z)$ and a $y\in H_p(X,\Z)$, and set $b'=(\id\times\varphi)^*(b)$. From the diagram we get
\begin{equation}\label{mainlemmaaux1}
((A \circ \iota)^*b)/ y=({O'})^*(b'/y)
\end{equation}
where $O':G\to V_0\times G$ takes $g\in G$ to $(g(s_0),g)$.

We now compare this with $O^*(b/y)$. Let $i_1:V_0\to V_0\times G$ be the inclusion given by $v\mapsto (v,e)$. Using the map $$i_1\times\id_X:V_0\times X\to V_0\times G\times X$$
we see that $i_1^*(b'/y)=b/y$, so
\begin{equation}\label{mainlemmaaux2}
O^*(b/y)=(i_1\circ O)^*(b'/y).
\end{equation}

Let $i_2\colon G\to V_0\times G$ be the inclusion given by $g\mapsto (s_0,g)$. The maps $O'$, $i_1\circ O$, and $i_2$ can be decomposed as follows:
\begin{align*}
O'\colon G\xrightarrow{\Delta}G\times G \xrightarrow{O\times\id_G}V_0\times G,\\
i_1\circ O\colon G\xrightarrow{j_1}G\times G \xrightarrow{O\times\id_G}V_0\times G,\\
i_2\colon G\xrightarrow{j_2}G\times G \xrightarrow{O\times\id_G}V_0\times G.
\end{align*}
Here the map $\Delta$ is the diagonal inclusion, $g\mapsto (g,g)$, the map $j_1$ is given by $g\mapsto (g,e)$, and $j_2$ is given by $g\mapsto (e,g)$. It follows from the K\"unneth formula that the image of
\begin{equation}\label{mainlemmaaux3}
({O'})^*-(i_1\circ O)^*-i^*_2:H^*(V_0\times G,\Q)\to H^*(G,\Q)
\end{equation}
consists of decomposable elements. Finally, to finish the proof we set $b=-ev_V^*(a_E)$ and apply \ref{mainlemmaaux3} to $b'/y=-\big ((\id\times\varphi)^*(ev_V^*(a_E))\big)\bigm/ y$. We conclude using \ref{mainlemmaaux1} and \ref{mainlemmaaux2} that
\begin{equation*}
-(ev_V \circ A \circ \iota)^*(a_E)/y - O^*(\Lk^{\Sigma}_V(y))  + i_2^*\Big (\big ((\id\times\varphi)^*(ev_V^*(a_E))\big)\bigm/y\Big ) \in {{\dec}}^{r-1-p}_{\Q} (G).
\end{equation*}
It remains to calculate the third term in this sum. Using the commutative diagram
\begin{equation*}
\begin{tikzcd}
V_0\times G\times X\arrow{r}{\id\times\varphi} & V_0\times X\arrow{r}{ev_V} & \Tot_0(E)\\
G\times X\arrow{r}{\varphi}\arrow{u}{i_2\times\id_X} & X \arrow{r}{s_0}& \Tot_0(E)\arrow{u}[swap]{\id}
\end{tikzcd}
\end{equation*}
we see that $i_2^*\Big (\big ((\id\times\varphi)^*(ev_V^*(a_E))\big)\bigm/y\Big ) = \varphi^*(s_0^*(a_E))/y$.
%
\end{proof}
\begin{rmk}\label{linkingsecthomeqsmoothproj} Suppose $X$ is smooth compact complex algebraic, $G$ is complex affine algebraic and the action of $G$ on $X$ is algebraic. Then the map $\varphi^*:H^*(X,\Q)\to H^*(G\times X,\Q)$ is the same as $p_2^*$ (Lemma~\ref{actproj}). This implies that the map $(\id\times\varphi)^*:H^*(V_0\times X,\Q)\to  H^*(V_0\times G\times X,\Q)$ is the same as $(\id\times p_2)^*$. So using diagram \ref{maindiagmainlemma} we see that under the above assumptions in rational cohomology we have
\begin{equation*}
O^*(\Lk^{\Sigma}_V(y)) = -(ev_V \circ A \circ \iota)^*(a_E)/y.
\end{equation*}
\end{rmk}
\begin{dfn}
\label{bigindet}
The {\it big indeterminacy submodule} $bM^*_X \subset H^*(G\times X,\Z)$ is the sum of the indeterminacy submodule $M^*_X$ and $p_2^*(H^*(X,\Z))$. As usual, we will omit the subscript if it is clear which space $X$ is meant.
\end{dfn}
\begin{lmm}
\label{partialactionsecthom}
Recall that we assume that $e(E)=0$.
For every secondary Thom class $a_E$ the following identity holds modulo the big indeterminacy submodule $bM^{*}_X\subset H^{*}(G\times X,\Z)$:
\begin{equation*}
\partial \Phi^*(a_E)=(\Phi\circ (\id\times ev_V)\circ \iota)^*(a_E)
\end{equation*}
\end{lmm}
\begin{proof}
As in Section~\ref{1stsec}, let $p_0\colon \Tot_0(E)\to X$ be the projection map and recall that the homomorphism
\begin{equation*}
(\id\times p_0)^*\colon H^{*}(G\times X,\Z)\to H^{*}(G\times \Tot_0(E),\Z)
\end{equation*}
is injective (Proposition~\ref{injectivity}). So to prove the lemma it would suffice to check that 
\begin{equation}\label{partialactionsecthomformula}
(\id\times p_0)^*(\partial \Phi^*(a_E))= (\id\times p_0)^*((\Phi\circ (\id\times ev_V)\circ \iota)^*(a_E)) \mod (\id\times p_0)^*(bM^*_X).
\end{equation}
The left hand side of this is equal $\Phi^*(a_E)-P_2^*(a_E)$. Set $b= \Phi^*(a_E)-P_2^*(a_E)$, see Notation~\ref{partialaction}. Formula~\ref{partialactionsecthomformula} then reads
\begin{equation*}
b= (\id\times p_0)^*\big (((\id\times ev_V)\circ \iota)^*(b+P_2^*(a_E))\big )\mod (\id\times p_0)^*(bM^*_X). 
\end{equation*}
Let $q\colon G\times V_0\times X \to G\times X$ be the projection map. Notice that the following diagram commutes
\begin{equation*}
\begin{tikzcd}[column sep=large]
G\times V_0\times X\arrow{r}{\id\times ev_V} \arrow{d}{q}
&G\times \Tot_0(E) \arrow{d}{\id\times p_0} \\
G\times X \arrow{r}{\id}
&G\times X.
\end{tikzcd}
\end{equation*}
Moreover, the map $\iota\colon G\times X\cong G\times \{s_0\} \times X\to G\times V_0\times X$ is a section of the projection map $q$. Since $b= (\id\times p_0)^*(c)$ for some $c\in H^*(G\times X,\Z)$ by Lemma~\ref{actionsecthom}, we see that
\begin{equation*}
(\id\times p_0)^*\big (((\id\times ev_V)\circ \iota)^*(b)\big )= (\id\times p_0)^*(\iota^*(q^*(c))= (\id\times p_0)^*(c)=b. 
\end{equation*}
To complete the proof of the lemma it suffices to show that $((\id\times ev_V)\circ \iota)^*(P_2^*(a_E))\in  bM^*_X$, which follows from
$((\id\times ev_V)\circ \iota)^*(P_2^*(a_E)) =p_2^*(s_0^*a_E) \in bM^*_X$.
\end{proof}
\begin{ntt}
Let $y\in H_p(X,\Z)$ be a homology class. We denote by $bM^q_{X,y}$ the image of the big indeterminacy submodule $bM^{q+p}_X$ under the slant product homomorphism $/y\colon H^*(G\times X,\Z) \to H^{*-p}(G,\Z)$.
\end{ntt}
Notice that if $x\in H^q(X,\Z)$ and $y\in H_p(X,\Z)$ such that $p\neq q$, then
\begin{equation}\label{slantvanishmodbm}
\varphi^*(x)/y=p_2^*(x)/y=0 \mod bM^{q-p}_{X,y}.
\end{equation}

\begin{cor}
\label{prepmaincorollary}
Recall that we assume that $e(E)=0$, and $r$ denotes the rank of $E$. 
The following identity holds for every $y\in H_p(X,\Z)$ provided $p\neq r-1$: 
\begin{equation*}
O^* (\Lk^{\Sigma}_V(y)) = S(e_G(E))/y \mod \left(bM^{r-p-1}_{X,y}+\dec\nolimits^{r-p-1}_{\Q}\right).
\end{equation*}
\end{cor} 

Recall that the target of $S$ is $H^{*}(G\times X,\Z)/M_X^{*}$, see Definition~\ref{mainhomomorphism}. Since $(M^*_X)/y\subset bM_{X,y}^*$, the~above identity makes sense.

\begin{proof} 
\begin{align*} S(e_G(E))/ y\; =& \;se_G(E)/y \mod bM^{r-p-1}_{X,y} &\text{(Proposition~\ref{seceulerse})} \nonumber
\; \\=& \;-\partial \Phi^*(a_E)/y \mod bM^{r-p-1}_{X,y} &\text{(Proposition~\ref{secthomseceuler})} \nonumber
\; \\=& \;-(\Phi\circ (\id\times ev_V)\circ \iota)^*(a_E)/y \mod bM^{r-p-1}_{X,y} &\text{(Lemma~\ref{partialactionsecthom})} 
\; \\=& \;-(ev_V \circ A \circ \iota)^*(a_E)/y \mod bM^{r-p-1}_{X,y}&\text{(Commutative diagram~\ref{cdevact})}\nonumber
\; \\=& \; O^*(\Lk^{\Sigma}_V(y)) \mod\left( bM^{r-p-1}_{X,y}+ \dec\nolimits^{r-p-1}_{\Q}\right)&\text{(Proposition~\ref{linkingsecthom} and formula~\ref{slantvanishmodbm}).} \nonumber
\end{align*}
\end{proof}

\begin{cor}
\label{maincorollary}
We use the notation of Corollary~\ref{prepmaincorollary}.
Suppose moreover that $X$ is a CW-complex of dimension $<r-1$ (so $e(E)=0$ is automatic). Then the following identity holds in $H^*(G,\Z)$ for every $y\in H_p(X,\Z)$: 
\begin{equation*}
O^*(\Lk^{\Sigma}_V(y)) = S(e_G(E))/y\mod {\dec}\nolimits^{r-p-1}_{\Q}. 
\end{equation*}
\end{cor}
\begin{proof}
By Corollary~\ref{prepmaincorollary} it suffices to prove that $bM^{r-1}_{X}=0$, as $bM^{r-p-1}_{X,y}=bM^{r-1}_{X}/y$. We now use the fact that $bM^*_X=\im(\varphi^*-p_2^*)+\im p_2^*=0$ in degree $r-1$.
\end{proof}

%
Corollary \ref{maincorollary} allows one to construct a subring of $\fr{*}(G,\Z)$ that is included in the image of the cohomology map induced by any orbit map $O:G\to V_0$. In the sequel we will often use these subrings, so we introduce the following notation.

\begin{ntt}\label{lambdae}
Recall that above we set $G$ to be a topological group and $E$ a $\Z$-oriented $G$-equivariant vector bundle over a topological $G$-space $X$ with $e(E)=0$. We let $\Lambda^*_{E,G}$, or simply $\Lambda^*_E$ if it is clear which group $G$ is meant, denote the subring of $\fr{*}(G,\Z)$ generated by $S(e_G(E))/y,y\in H_*(X,\Z)$.
\end{ntt}

\subsection{Linking class homomorphism: spaces of regular sections}\label{secsmoothsec}

{To begin with, suppose} $X$ is a $C^\infty$-smooth manifold and $E$ is a smooth real vector bundle over $X$ of rank $r$. Note that $X$ embeds in $\Tot(E)$ as the zero section. We will now explain how to apply the results of the previous subsection to spaces of regular sections.
\begin{dfn}
\label{transversality}
Let $s\in C^{\infty}(X,E)$ be a smooth section of $E$. An $x\in X$ is called {\it a singular point} of the zero locusof $s$ if the submanifolds $X$ and $s(X)$ embedded into $\Tot(E)$ intersect non-transversally at $x$.
\end{dfn} 

\begin{ntt}
\label{sing}
Let us equip the vector space $C^{\infty}(X,E)$ with the standard Whitney $C^\infty$-topology. Let $V$ be a subset of $C^\infty(X,E)$. The subset 
\begin{equation*}
\Sing_V(E)= \{s\in V\mid \mbox{ the zero locus of $s$ has a singular point}\}.
\end{equation*}
of $V$ will be called the {\it discriminant}. Let $V_{\rreg}$ be the complement of $\Sing_V(E)$ in $V$. Elements of $\Sing_V(E)$, respectively $V_{\rreg}$ will be called {\it singular}, respectively {\it regular}. \par
\end{ntt}

\begin{exmp}
\label{bigrank}
Suppose that $\rk(E)>\dim(X)$. Then $V_{\rreg}= V_0$ for any $V\subset C^\infty(X,E)$. 
\end{exmp}

Given a vector bundle $E$ as above there is a vector bundle $E'$ such that there is a continuous inclusion $j\colon C^{\infty}(X,E) \to C^{0}(X,E')$ and $j(V_{\rreg})\subset j(V)_0$ for any subset $V\subset C^{\infty}(X,E)$. The holomorphic and $C^\infty$-versions are slightly different.

The $C^\infty$ case is discussed e.g.\ in \cite{S89}*{Chapter 4}: we take $E'=J_{\mathrm{top}}(E)$, the smooth jet bundle over $X$, which is part of an exact sequence
\begin{equation}
\label{sesjetbundletop}
0\to T^* X \otimes_{\mathbb{R}} E \to J_{\mathrm{top}}(E) \to E \to 0.
\end{equation}

In this paper however we are mainly interested in the case when $X$ is smooth complex algebraic and $E$ is an algebraic vector bundle. Then we take $E'$ to be the algebraic (first) jet bundle $J(E)$, see e.g.\ \cite{EGA4}*{Chapter 16.7}, and there is a natural exact sequence
\begin{equation}
\label{sesjetbundle}
0\to \Omega_X \otimes_{\mathcal{O}_X} E \to J(E) \to E \to 0.
\end{equation}

This construction extends in a straightforward way to the case when $X$ is smooth complex analytic and $E$ is a holomorphic vector bundle over $X$, which is what we will assume until the end of Section \ref{secsmoothsec}. If $V=\Gamma(X,E)$, the space of homolorphic sections of $E$, we will write $\Sing(E)$ instead of $\Sing_V(E)$.

Note that the rank $\rk_{\Co}(J(E))$ is $\rk_{\Co}(E)(\dim_{\Co}(X)+1)$ and that there is a natural linear injective map
\begin{equation*}
\label{jetsection}
j \colon \Gamma(X,E) \to \Gamma(X,J(E)) \hookrightarrow C^0(X,J(E)).
\end{equation*}
By local arguments the preimage $j^{-1}(j(V)_0)$ is equal to the set $V_{\rreg}$, so we get a natural continuous map $j\colon V_{\rreg} \to j(V)_0$.

Recall that the Cayley trick allows one to identify $\Gamma(X,E)$ with the space of holomorphic sections of a certain line bundle in such a way that regular sections correspond to regular sections. In more detail, let $\PP(E^*)$ be the projectivisation of the dual vector bundle $E^*$ and let $\pi\colon \PP(E^*)\to X$ be the projection. The direct image $\pi_*(\Oh_{\PP(E^*)}(1))$ of the relative twisting line bundle $\Oh_{\PP(E^*)}(1)$ is canonically isomorphic to the vector bundle~$E$ itself. So there is an isomorphism
\begin{equation*}
\Gamma\pi_*\colon \Gamma(\PP(E^*),\Oh_{\PP(E^*)}(1)) \xrightarrow{\cong} \Gamma(X,E).
\end{equation*}
There is a natural (continuous) linear map
\begin{equation*}
j_E\colon \Gamma(X,E) \xrightarrow{\Gamma\pi_{*}^{-1}}\Gamma(\PP(E^*),\Oh_{\PP(E^*)}(1)) \xrightarrow{j} C^0\big(\PP(E^*),J(\Oh_{\PP(E^*)}(1))\big).
\end{equation*}
{A section $j_E(s),s\in\Gamma(X,E)$ of $J(\Oh_{\PP(E^*)}(1))$ will be called} {\it holonomic.}
Let $V$ be any subset of~$\Gamma(X,E)$. A local calculation shows that $j_E(V_{\rreg})\subset j_E(V)_0$. Recall that for every subset $W\subset C^0\big(\PP(E^*),J(\Oh_{\PP(E^*)}(1))\big)$ we have a homomorphism $\Lk^{\Sigma}_W\colon H_*(\PP(E^*))\to H^*(W_0)$ (see~Definition~\ref{linkingclasshomomorphism}).
\begin{dfn}
\label{linkingclasssing}
Set $d=\dim_{\Co} X$ and $r=\rk_{\Co} E$. We define the {\it linking class homomorphism}
\begin{equation*}
\Lk^{\Sing}_V\colon H_p(\PP(E^*),\Z) \to H^{2(r+d)-p-1}(V_{\rreg},\Z) 
\end{equation*}
by the formula $\Lk^{\Sing}_V(y)=j_E^*\big(\Lk^{\Sigma}_{j_E(V)}(y)\big)$ for $y\in H_p(\PP(E^*),\Z)$. {Elements of the image of $\Lk^{\Sing}_V$ will be referred to as} {\it{linking classes}}. If $V=\Gamma(X,E)$, we will often write $\Lk$ instead of $\Lk^{\Sing}_V$.
\end{dfn}
\begin{rmk}
\label{rankonecoincidence}
If $E$ is a line bundle, then everything simplifies: we have $\PP(E^*)=X$, $\Oh_{\PP(E^*)}(1)=E$, and $j_E=j$. So for $y\in H_*(X,\Z)$ the cohomology class $\Lk^{\Sing}_V(y)$ is given by the formula $\Lk^{\Sing}_V(y)=j^*\big(\Lk^{\Sigma}_{j(V)}(y)\big)$, cf.\ the introduction.
\end{rmk}

\bigskip

The classes $\Lk^{\Sing}_V(y)$ have a geometric interpretation similar to the one given in Propositions~{\ref{geometricdescription1}} and {\ref{mhsmotivation}} for $\Lk^\Sigma_V(y)$. 
%
%
Set $V=\Gamma(X,E)$ and $L=\Oh_{\PP(E^*)}(1)$. We take $X$ to be smooth compact algebraic and we assume that
$J(\Oh_{\PP^*(E)}(1))$ is globally generated by holonomic sections, i.e.\ if the following {map} is surjective: 
\begin{equation*}
\Gamma(\PP(E^*),L)\times\PP(E^*)  \xrightarrow{j\times \id}  \Gamma(\PP(E^*),J(L))\times\PP(E^*)  \xrightarrow{ev} \Tot(J(L)).
\end{equation*}

Then
\begin{equation*}
\widetilde {\Sing}(E)=\{(s,x)\in \Gamma(X,E)\times\PP(E^*) \,\,\, \vert\,\,\, j_E(s)(x)=0\}
\end{equation*}
is the total space of a vector bundle over $\PP(E^*)$.
{Suppose} that $y\in H_{2p}(X,\Z)$ is {represented by an} irreducible algebraic subvariety $Y\subset X$. Let $\Sing^Y(E)$ denote the subset of $\Sing(E)$ that consists of sections that have a singular point in~$Y$. Then $\Sing^Y(E)$ is an irreducible subvariety of $\Sing(E)$, so there exist a fundamental class $[\Sing^Y(E)] \in \bm{*}(\Sing(E))$ and the Alexander dual cohomology class $\Lk^{\Sing}_Y \in H^*(\reg{X,E})$. These classes generalise those defined by C.~Peters and J.~Steenbrink \cite{PS03} for $E=\Oh(d),X=\PP^n(\Co)$. In the following proposition we express them in terms of the linking class homomorphism.
%

\begin{prop}
\label{geometricdescription2}
Set ${\widetilde\Sing}{}^Y(E)=p_X^{-1}(Y)\subset \widetilde\Sing(E)$. Suppose that $p_V\colon {\widetilde\Sing}{}^Y(E) \to \Sing^Y(E)$ is generically finite of degree $k$. Then $\Lk^{\Sing}_V(\pi^{!}y) = -k\Lk^{\Sing}_Y$, where $\pi^!y=[\pi^{-1}(Y)]$.\qed
\end{prop}

\begin{rmk}
Let $\Sing(s)\subset X$ be the singular locus of $s\in \Gamma(X,E)$. The number $k$ of Proposition~\ref{geometricdescription2} can be interpreted as $|\Sing(s)\cap Y|$ for a general section $s \in \Sing(E)$. This number is finite if and only if $\langle c_{p}(J(L)), [\pi^{-1}Y]\rangle \neq 0$, see Proposition~\ref{degreediscr}.
\end{rmk}

\begin{prop}
\label{mhsmotivation2}
If $p>1$, then the image $\im(\Lk^{\Sing}_V)$ of the linking class homomorphism 
$$\Lk^{\Sing}_V\colon H_{2r+2d-p}(\PP(E^*),\Q)\to H^{p-1}(\reg{X,E},\Q)$$
coincides with $W_{p}H^{p-1}(\reg{X,E},\Q)$. \qed
\end{prop}

\bigskip

Now we study the behaviour of the cohomology classes $\Lk^{\Sing}_V(y)$ under a group action. Let $G$ be a topological group that acts on $X$ by biholomorphic transformations, and suppose $E$ is $G$-equivariant. Then~$G$ also acts on $E^*$, $\PP(E^*)$, $\Oh_{\PP(E^*)}(1)$, and $J(\Oh_{\PP(E^*)}(1))$, the isomorphisms $\pi_*(\Oh_{\PP(E^*)}(1)) \cong E$ and $\Gamma \pi^*$ are $G$-equivariant, and so are exact sequence~\ref{sesjetbundle} and the inclusion $j_E\colon \Gamma(X,E) \to C^{0}(\PP(E^*),J(\Oh_{\PP(E^*)}(1)))$.

Let~$V$ be a $G$-stable subset of $\Gamma(X,E)$. Then $V_{\rreg}$ is also $G$-stable. Suppose that $V_{\rreg}$ is non-empty. Fix an $s_0\in V_{\rreg}$ and consider the orbit map $O\colon G \to V_{\rreg}$, $O(g)=gs_0$. We want to calculate the following composition:
\begin{equation*}
O^*\circ \Lk_V^{\Sing}\colon H_p(\PP(E^*),\Z) \to H^{2(d+r)-1-p}(V_{\rreg},\Z) \to H^{2(d+r)-1-p}(G,\Z).
\end{equation*}

\begin{thm}
\label{maintheorem}
Let $G$ be a topological group that acts on a smooth complex analytic variety $X$ by biholomorphic transformations. Suppose that $E$ is a $G$-equivariant holomorphic vector bundle over~$X$ and let $V$ be a $G$-stable subset of $\Gamma(X,E)$. Then the following identity holds in $H^*(G,\Z)$ modulo $\dec^*_{\Q}$ for every $y\in H_*(\PP(E^*),\Z)$: 
\begin{equation*}
O^*\circ \Lk^{\Sing}_V(y) = S\Big(e_G\big(J(\Oh_{\PP(E^*)}(1))\big)\Big)\Bigm/y.
\end{equation*}
Suppose that moreover $G,X,E$ and the actions of $G$ on $E$ and $X$ are algebraic, and that $G$ is affine and $X$ is compact. Then the above identity is true modulo torsion.
\end{thm}
\begin{proof}
By Definition~\ref{linkingclasssing} we have
\begin{equation*}
O^*\circ \Lk^{\Sing}_V(y) = O^*\circ j_E^*\circ \Lk^{\Sigma}_{j_E(V)}(y). 
\end{equation*}
Since the map $j_E$ is $G$-equivariant, the composition $O^*\circ j_E^*$ is $O^*_1$ where $O_1$ is the orbit map of $j_E(s_0)$. Applying Corollary~\ref{maincorollary} we see that modulo $\dec_{\Q}^*$ we have
\begin{equation*}
O^*\circ \Lk^{\Sing}_V(y) = O^*_1\circ \Lk^{\Sigma}_{j_E(V)}(y) = S\Big(e_G\big(J(\Oh_{\PP(E^*)}(1))\big)\Big)\Bigm/y
\end{equation*}

To prove that in the algebraic case the identity in fact holds modulo torsion and not just $\dec^*_{\Q}$, we compare the weights. By Remark~\ref{imagebargammaprimitive}, the weight of every decomposable element of $H^*(G,\Q)$ is at least the degree plus 2. On the other hand, rationally both $O^*\circ \Lk^{\Sing}_V(y)$ and $S\Big(e_G\big(J(\Oh_{\PP(E^*)}(1))\big)\Big)\Bigm/y$ are pure of weight equal the degree plus 1: for former class this follows straight from the definition, and for the latter we use Corollary~\ref{seceulersub}, formula~\ref{formulas1} and Remark~\ref{imagebargammaprimitive}.
\end{proof}

\begin{cor}
\label{maincorollary2}
Let $G$ be an affine complex algebraic group, and let $L$ be a $G$-equivariant algebraic line bundle over a smooth compact complex algebraic $G$-variety $X$. Suppose $V$ is a $G$-stable subset of $\Gamma(X,L)$. Then the following identity holds modulo torsion in $H^*(G,\Z)$ for every $y\in H_*(X,\Z)$: 
\begin{equation*}
O^*\circ \Lk^{\Sing}_V(y) = S\big(e_G(J(L))\big)\bigm/y.
\end{equation*}
\end{cor}

Unless stated otherwise, Theorem \ref{maintheorem} and Corollary \ref{maincorollary2} are applied below to $V=\Gamma(X,E)$.

\section{Applications}\label{sectapplications}

Let $U$ be a path-connected topological space, and let $G$ be a topological group that acts on $U$ continuously on the left. Note that since $U$ is assumed path connected, all orbit maps $G\to U$ are homotopic (cf.\ Remark~\ref{orbitmapshomotopic}). We will say that {\it the division theorem holds for the action of $G$ on $U$} if the homomorphism $H^*(U,\Q)\to H^*(G,\Q)$ induced by some (and hence every) orbit map is surjective. This is what one might expect to get e.g.\ by applying Theorem~\ref{maintheorem} to $U$ equal the space of regular sections of some holomorphic vector bundle, and in this section we describe a few consequences. In particular, in Section \ref{quotslices} we prove (Theorem~\ref{thmquotslice}) that, assuming $U$ complex affine algebraic and $G$ complex reductive, the division theorem for the action of $G$ on $U$ implies that $H^*(U,\Q)$ is ``divisible'' by $H^*(G,\Q)$, which explains our terminology.

The integral analogue of the division theorem is almost always false. By measuring the extent to which it is false we are able to obtain restrictions on the orders of the $G$-automorphism groups of elements of $U$ (Proposition~\ref{stab}). In Section~\ref{sectionsvsloci} we compare the automorphism groups of sections to those of their zero loci. If $E$ is a $G$-homogeneous vector bundle, we construct a split extension $\widetilde{G}$ of $G$ that acts on $E$ (Notation~\ref{tildeg}). The stabiliser $\widetilde{G}_s$ of a section $s$ of $E$ maps to the stabiliser $G_{Z(s)}$ of the zero locus $Z(s)$ of $s$, and under certain hypotheses this map is surjective (see Lemma~\ref{automorphismsofzeros} and Remark~\ref{autobarg}). In order to check these hypotheses one can use Lemmas~\ref{transitive} and \ref{Koszul condition}. If $E$ has no non-trivial fibrewise automorphisms (covering the identity of the base), then $\widetilde{G}\cong \Co^*\times G$, which leads to simplifications that we describe in Corollary~\ref{descentc*2ndrev}. A related result is Proposition~\ref{degreediscr}, which allows one to calculate the degree of the discriminant variety in many cases of interest (see also Remark~\ref{rmkdegreediscr}).

We summarise the results of Section~\ref{sectapplications} in Theorem~\ref{mainthmappl} and Corollaries~\ref{maincorappl} and \ref{stabilizerisproduct}.

\subsection{Quotients and slices}\label{quotslices}

\begin{thm}\label{thmquotslice}
Let $G$ be a connected complex reductive group that acts algebraically on an affine complex algebraic variety $U$. Suppose the division theorem holds for this action, and let $O:G\to U$ be an orbit map. Then the following is true.
\begin{enumerate}
\item The stabiliser $G_x$ of every point $x\in U$ is finite and the geometric quotient of $U$ by $G$ exists and is affine.
\item There is an \'etale slice, respectively a smooth complex analytic slice through every point of $U$, respectively through every smooth point of $U$.
\item There is an isomorphism
$$H^*(U,\Q)\cong H^*(G,\Q)\otimes H^*(U/G,\Q)$$
of rings that can be made compatible with mixed Hodge structures provided $H^*(U,\Q)$ has a mixed Hodge substructure ${}'H^*$ such that $O^*:{}'H^*\to H^*(G,\Q)$ is an isomorphism.
\end{enumerate}
\end{thm}

\begin{proof} We take an orbit $G(x),x\in U$ of minimal dimension. It is closed, so it is an affine variety, and so the stabiliser $G_x$ of $x$ is reductive by Matsushima's criterion, see \cite{richardson}. If $\dim_{\Co} G_x>0$, then the map $G\to G/G_x$ does not induce a surjective map in rational cohomology, and hence nor does the orbit map of $G\ni g\mapsto gx\in U$.

So all stabilisers are finite, which means that the orbit of every element of $U$ is closed and the geometric quotient of $U$ by $G$ exists, see e.g.\ \cite{New78}*{Chapter 3, \S\S 3,4}. The statement about slices follows from Luna's \'etale slice theorem, see \cite{luna} and \cite{VP89}*{6.6}.

The third statement follows from \cite{PS03}*{Section 2} and the fact that the rational cohomology of a connected Lie group is a free graded commutative algebra.
\end{proof}

\begin{rmk}
One can check the condition in part 3 of Theorem~\ref{thmquotslice} using the following observations. Suppose there is a map of mixed Hodge structures $f:{}'P^*\to H^*(U,\Q)$ such that ${}'P^*$ is polarisable and pure in each degree and $O^*\circ f$ induces a surjection from ${}'P^*$ to $P^*_{\Q}$, the space of primitive elements of $H^*(G,\Q)$, see Remark~\ref{imagebargammaprimitive}. Then there is a mixed Hodge substructure ${}'H^*\subset H^*(U,\Q)$ that maps isomorphically to $H^*(G,\Q)$ under $O^*$.

In particular, if $W_i H^i(U,\Q)=0$ for all $i>0$, which will be the case in all our examples, one can take ${}'P^i=W_{i+1} H^i(U,\Q)$ and $f=\id$.
\end{rmk}

For completeness we compare two natural topologies on the geometric quotient.

\begin{prop}
Suppose $G$ is reductive and the stabiliser of every point of $U$ is finite. Then the geometric quotient with the complex analytic topology coincides with the topological quotient space, i.e.\ the quotient space of $U$ with the complex analytic topology by the equivalence relation given by the action of $G$.
\end{prop}
\begin{proof}
We note that there is a natural bijective continuous map from the topological quotient to the geometric quotient with the complex analytic topology. To show that it is a homeomorphism it would suffice to prove that it is open. By the \'etale slice theorem, the latter statement would follow if we prove that if $x\in U$ and $S$ is an \'etale slice through $x$, then the map
\begin{equation}\label{open}
G\times_{G_x} S\rightarrow (G\times_{G_x} S)/G
\end{equation}
is open in the complex analytic topology. Here $G\times_{G_x} S$ is the quotient of $G\times S$ by the following action of $G_x$:
$$g_1(g,s)=(gg_1^{-1}, g_1s), g_1\in G_x,g\in G,s\in S.$$

It follows from \cite{VP89}*{6.6} that if a finite group $G$ acts on a complex affine variety $W$, then the complex analytic topology on $W/G$ is the same as the quotient of the complex analytic topology on $W$. Using this and the diagram
$$
\begin{tikzcd}
G\times S \arrow[r]\arrow[d] \arrow[drr]& S \arrow[r] & S/G_x\arrow[d,"\cong"]\\
G\times_{G_x} S\arrow [rr] && (G\times_{G_x} S)/G
\end{tikzcd}
$$
we see that map \ref{open} is open.
\end{proof}

\subsection{Automorphism groups}\label{automorphismgroups}

Here we explain how Theorem~\ref{maintheorem} can be applied to get bounds on the order of various automorphism groups. We begin with the following observation.

\begin{prop}\label{stab}
Suppose a connected Lie group $G$ acts continuously with finite stabilisers on a path-connected topological space $U$. Let $k$ be the maximum integer such that $H^k(G,\Z)\neq 0$. Suppose there is a class $a\in H^n(U,\Z)$ such that for some (and hence for every) $G$-orbit map $O:G\to U$ the pullback $O^*(a)$ generates a subgroup of $H^k(G,\Z)\cong\Z$ of index $m$. Then for every $x\in U$ the order of the stabiliser $G_x$ divides $m$.
\end{prop}

\begin{proof}
Take an $x\in U$ and let $K\subset G$ be a maximal compact subgroup of $G$ that contains the stabiliser $G_x$. We now use the following commutative diagram:
\begin{equation*}
\begin{tikzcd}
K\arrow[r,"\simeq"]\arrow[d] & G \arrow[r,"O"]\arrow[d] & U\\
K/G_x\arrow[r,"\simeq"] & G/G_x\arrow[ur] &
\end{tikzcd}
\end{equation*}
\end{proof}


\subsubsection{Automorphisms of sections versus automorphisms of zero loci.}\label{sectionsvsloci} We now explain how to relate the automorphism groups of regular sections to those of their zero loci. 

\begin{ntt} We use $Z(s)$ to denote the zero locus of a section $s$ of a vector bundle.
\end{ntt}

\begin{ntt} If $G$ is a topological group acting on a space $X$ and $Z\subset X$, then we denote the subgroup of $G$ which preserves $Z$ (not necessary pointwise) by $G_Z$.
\end{ntt}

\begin{ntt}
	If $p\colon E\to X$ is a complex vector bundle, then let $\aut_X(E)$ be the group of all couples $(f,g)$ where $g\in\aut(X)$ and $f$ is a fibrewise automorphism of $E$ that covers $g$. Let $\aut(E/X)\subset \aut_X(E)$ be the normal subgroup of all couples $(f,\id_X)$.
\end{ntt}

\begin{ntt}\label{tildeg}
Suppose a group $G$ acts on $X$. Let $\widetilde{G}$ be the fibre product $G\times_{\aut(X)}\aut_X(E)$, and let $p\colon \widetilde{G} \to G$ be the projection map. Denote the image of $G$ in $\aut(X)$ by $\bar G$.
\end{ntt}
\begin{lmm}\label{tildeGandG}
If $E$ is a $G$-equivariant vector bundle, then there exists an exact sequence of groups
\begin{equation*}
	1\to \aut(E/X) \to \widetilde{G} \xrightarrow{p} G \to 1.
\end{equation*}
Moreover, this exact sequence naturally splits. 
\end{lmm}

\begin{proof}
The construction of all maps in the exact sequence is straightforward. Moreover, the $G$-equivariant structure on $E$ allows one to lift the action map $G\to \aut(X)$ lifts to a homomorphism $G\to \aut_X(E)$, which gives us a splitting $G\to \widetilde{G}$.
%
\end{proof}

\begin{rmk}\label{central}
The group $\aut(E/X)$ always contains the central subgroup $\cong\Co^*$ of scalar automorphisms. If $\aut(E/X)$ coincides with this subgroup, then by Lemma~\ref{tildeGandG} we have $\widetilde{G}\cong\Co^*\times G$.
\end{rmk}


From now on we assume that $X$ is smooth complex algebraic, $E$ is algebraic and $G$ acts on $E$ and $X$ by biregular transformations.
Then $\widetilde{G}$ acts on $\reg{X,E}$ and the action of $G$ on $\reg{X,E}$ induced by the splitting $G\to\widetilde{G}$ is the same as the given one. 
Note that the projection $\widetilde{G} \to G$ induces a homomorphism $p_s\colon \widetilde{G}_s \to G_{Z(s)}$. 

\begin{lmm}
\label{automorphismsofzeros}
	Let $s\in\reg{X,E}$ be a regular section. If $\aut(E/X)$ acts transitively on the set of all $s'\in\reg{X,E}$ such that $Z(s')=Z(s)$, then $p_s$ is surjective.
\end{lmm}

\begin{proof}
	If $g\in G_{Z(s)}$, then $Z(gs)=Z(s)$. By our assumption there is an $h\in \aut(E/X)$ such that $hgs = s$. So $hg \in \widetilde{G}_s$, and $p_s(hg)=g$.
\end{proof}

\begin{rmk}\label{autobarg}
	Note that $\ker(p_s)=\ker(p)\cap \widetilde{G}_s = \aut(E/X)_s$. So under the assumptions above we have a split exact sequence
	\begin{equation*}
		1 \to \aut(E/X)_s \to \widetilde{G}_s \to G_{Z(s)} \to 1,
	\end{equation*}
	in particular, $|G_{Z(s)}|$ divides $|\widetilde{G}_{s}|$, and $|\bar G_{Z(s)}|$ divides $\dfrac{|\widetilde{G}_{s}|}{|\ker (G\to\aut X)|}$, assuming all these numbers are finite.
\end{rmk}

In view of Lemma \ref{automorphismsofzeros} we need to know whether the action of $\aut(E/X)$ on the set of regular sections with given zero locus is transitive. From now on we assume that $X$ is compact. Then for line bundles the answer is completely straightforward.

\begin{lmm}\label{linebundletrivial}
If $E$ is a line bundle, then we have $\widetilde{G}\cong \Co^*\times G, \aut(E/X)\cong\Co^*$, and the latter group acts transitively on the set of regular sections with any given zero locus. \qed
\end{lmm}

For general vector bundles there is a sufficient condition, which we will now describe. The group $\aut(E/X)$ is the group of invertible elements of the global $\Hom(E,E)$. 
Let $s_0$ be a regular section of $E$, and let $\mathcal{I}\subset \Oh_X$ be the ideal sheaf of $Z=Z(s_0)$. Then $\Hom(E^*,\mathcal{I})$ embeds in $\Hom(E^*,\Oh_X)=\Gamma(X,E)$ and the intersection $\Hom(E^*,\mathcal{I})\cap \reg{E}$ can be viewed both as the set $U_{\mathrm{surj}}$ of surjective morphisms $\in\Hom(E^*,\mathcal{I})$ and as the set of sections $s$ such that $Z(s)=Z$. 
Notice that $U_{\mathrm{surj}}$ is an irreducible algebraic variety.

The section $s_0\in \Hom(E^*,\mathcal{I})\subset \reg{E}$ induces a homomorphism 
$$\varphi\colon \Hom(E^*,E^*) \to \Hom(E^*,\mathcal{I}).$$

\begin{lmm}\label{transitive}
Suppose that the $\aut(E/X)$-action on $\reg{E}$ has finite stabilisers and the homomorphism $\varphi$ is surjective. Then $\aut(E/X)$ acts transitively on $U_{\mathrm{surj}}$.
\end{lmm}

\begin{proof}
{The map $\varphi$ is assumed surjective, so the $\aut(E/X)$-orbit of $s_0$, which coincides with the image $\varphi(\aut(E/X))$, is open in $U_{\mathrm{surj}}$. Since all stabilisers of the action of $\aut(E/X)$ on $U_{\mathrm{surj}}$ are finite, this action must have only one orbit.}
\end{proof}

There is a cohomological condition that ensures the surjectivity of $\varphi$. Using the Koszul resolution of $\mathcal{I}$
\begin{equation*}
0 \to \det E^* \to \dots \to \Lambda^2E^* \to E^* \to \mathcal{I} \to 0,
\end{equation*}
one can construct a third quadrant spectral sequence that converges to $\Hom(E^*,\mathcal{I})$:
\begin{equation*}
E^{p,q}_2=\Ext^p(E^*, \Lambda^{1-q}E^*) \Rightarrow \Ext^{p+q}(E^*,\mathcal{I}).
\end{equation*}
The edge map $E^{0,0}_2\to E^{0,0}_{\infty}$ in this spectral sequence is precisely the homomorphism $\varphi$.

\begin{lmm}
\label{Koszul condition}
If $\Ext^p(E^*,\Lambda^{p+1}E^*)=0$, $p>0$ then the homomorphism $\varphi\colon\Hom(E^*,E^*)\to \Hom(E^*,\mathcal{I})$ is surjective. \qed
\end{lmm}

\begin{cor}
\label{Sum of line bundles}
Let $L$ be an ample $G$-equivariant line bundle over $X$. Suppose that $E=L^{\oplus r}$, $r=\rk(E)\leq \dim(X)$. Then for any $s\in \reg{X,E}$ the map $p_s\colon \widetilde{G}_s\to G_{Z(s)}$ is an isomorphism.
\end{cor}
\begin{proof}
{First we observe that the action of $\aut(E/X)$ on $\reg{E}$ is in fact free.} Indeed, $\Gamma(X,E)$ can be identified with $\Hom(\Co^r,\Gamma(X,L))$, and $\aut(E/X)$ with $GL_r(\Co)$ {so that} under these identifications the group $\aut(E/X)$ acts by precomposition. Moreover, {the linear maps $\Co^r\to \Gamma(X,L)$ that correspond to regular sections are injective}, and $GL_r(\Co)$ acts freely on the set of such maps.

{Next we note that the hypotheses of Lemma~{\ref{Koszul condition}} hold for $E$ because $H^p(X,(L^{\otimes p})^*)$ vanish for $0<p\leq \rk(E)-1<\dim(X)$, which follows by the Kodaira vanishing theorem. So the map $\varphi$ for $E$ is surjective, and we finish the proof applying Lemmas~{\ref{transitive}} and~{\ref{automorphismsofzeros}}.}
\end{proof}

Finally, we give an example which shows that $\aut(E/X)$ may not act transitively on $U_{\mathrm{surj}}$.

\begin{exmp}
Take $X=\PP^n(\Co), n\geq 2$ and $E=T\PP^n(\Co)$. On the one hand, all automorphisms of $E$ are scalar, and on the other the space $\Gamma(X,E)$ can be identified with the space of traceless endomorphisms of $\Co^{n+1}$ so that the zeroes of a section are the eigenlines of the corresponding endomorphism.
\end{exmp}
 
\subsubsection{$\aut(E/X)=\Co^*$.}\label{autc*} Let $G$ be a complex reductive group, and let $E$ be a $G$-equivariant vector bundle over a smooth compact complex algebraic $G$-variety $X$. It follows from Remark~\ref{autobarg} that (under some assumptions) the order of the $G$-automorphism group of $Z(s),s\in\reg{X,E}$ divides $|\widetilde G_s|$, which in turn, as we will shortly see, divides the index of $\Lambda^{k}_{E', \widetilde{G}}$ (see Notation \ref{lambdae}) in $H^{k}(\widetilde G,\Z)$; here $E'$ is a certain $\widetilde{G}$-equivariant line bundle and $k$ is the top degree in which the cohomology of $\widetilde{G}$ is non zero. We will now explain how calculating the index simplifies if the fibrewise automorphism group $\aut(E/X)$ is $\Co^*$, which will be the case in most examples we consider in Section \ref{secexamples}. Let us choose a generator $\mathbf{x}$ of $H^1(\Co^*,\Z)$.

\begin{lmm}\label{lkfundx}
Let $E'$ be a rank $r$ complex vector bundle over a CW-complex $X'$ of dimension $<2r$. We consider the standard action of $\Co^*$ on the total space of $E'$ (so the resulting action on $X'$ is trivial). Let $V\subset C^0(X,E)$ be a vector subspace, and let $O\colon \Co^*\to V_0$ be an orbit map (recall that $V_0$ denotes the space of nowhere vanishing sections $\in V$, see Section \ref{linkingclasssubsect}). Then if $y\in H_{2r-2}(X',\Z)$, the class $O^*(\Lk_V^\Sigma(y))\in H^1(\Co^*,\Z)$ (see Definition \ref{linkingclasshomomorphism}) is $\pm\langle c_{r-1}(E'),y\rangle\mathbf{x}$.
\end{lmm}

\begin{proof} The homotopy quotient $X'_{h \Co^*}\simeq B\!\Co^*\times X'$, and a straightforward check shows that the vector bundle~$E'_{h\Co^*}$ is $\gamma^1\boxtimes E$, where $\gamma^1$ denotes the tautological line bundle over $B\Co^*$. So
$$e_{\Co^*}(E')=e(E'_{h\Co^*})=\sum_{i=0}^r c^{r-i}\times c_i(E')=\sum_{i=0}^{r-1} c^{r-i}\times c_i(E')$$ where $c=c_1(\gamma^1)$. The ideals $I(X,\Z)$ and $I_1(X,\Z)$ coincide and are generated by $c\times 1$. Using formula~\ref{formulas1} and Proposition~\ref{kernel} we see that for every $y\in H_{2r-2}(X',\Z)$ we have
$$S(e_{\Co^*}(E'))=S(c\times c_{r-1}(E'))=\bar\gamma(c)\times \alpha^*(1\times c_{r-1}(E'))=\bar\gamma(c)\times c_{r-1}(E')\in H^{2r-1}(\Co^*\times X',\Z).$$ By Corollary \ref{maincorollary},
$$O^*(\Lk_V^\Sigma(y))=S(e_{\Co^*}(E'))/y=(\bar\gamma(c)\times c_{r-1}(E'))/y=\langle c_{r-1}(E'),y\rangle \bar\gamma(c).$$ Now observe that $\bar\gamma(c)\in H^1(\Co^*,\Z)$ is $\pm\mathbf{x}$ by Remark~\ref{gamma and transgression}, cf.\ {Example}~\ref{gln}.
\end{proof}

\begin{cor}\label{descentc*2ndrev}
Let $G$ be a connected complex reductive group that acts algebraically on a smooth compact complex algebraic variety $X$ of dimension $d$, and let $E$ be a $G$-equivariant algebraic vector bundle of rank $r$ over $X$. We suppose that $\aut(E/X)\cong\Co^*$ and $H^1(G,\Z)=0$. Set $E'=J(\Oh_{\PP^*(E)}(1))$ (see Section~\ref{secsmoothsec}), and suppose that $\delta=\langle c_{d+r-1}(E'),[\PP(E^*)]\rangle\neq 0$. Recall that $\widetilde{G}\cong\Co^*\times G$ (Remark~\ref{central}). Let $\tilde{\mathbf{x}}\in H^1(\widetilde{G},\Z)$ be the pullback of $\mathbf{x}\in H^1(\Co^*,\Z)$ under the projection $\widetilde{G}\to\Co^*$. 

\begin{enumerate}
\item The class $S(e_{\widetilde G}(E'))/[\PP(E^*)]\in H^1(\widetilde G,\Z)$ is $\pm\delta\tilde{\mathbf{x}}$. Moreover, if $y\in H_{<2(d+r-1)}(\PP(E^*),\Z)$, then modulo torsion the class $S(e_{\widetilde G}(E'))/y$ is the pullback of $S(e_{G}(E'))/y$ under $\widetilde{G}\to G$.

\item The ring $\Lambda^*_{E', G}$ has finite index in $\fr{*}(G,\Z)$ if and only if $\Lambda^*_{E', \widetilde{G}}$ has finite index in $\fr{*}(\widetilde G,\Z)$, and the index of $\Lambda^{\dim \widetilde G}_{E', \widetilde{G}}$ in $H^{\dim \widetilde G}(\widetilde G,\Z)$ is $\delta$ times the index of $\Lambda^{\dim G}_{E', {G}}$ in $H^{\dim G}(G,\Z)$.
\end{enumerate}
\end{cor}

\begin{proof}
To prove the first statement we use Lemma~\ref{lkfundx}, Remark~\ref{central}, the functoriality of the map $S$ (see diagram~\ref{functorialityofs}) and the following observations: Firstly, using Remark~\ref{imageofs} we see that the image of every $S(e_{\widetilde G}(E'))/y, y\in H_*(\PP(E^*),\Z)$ in $\fr{*}(\widetilde{G},\Z)$ is primitive. Secondly, the self-map of $\fr{i}(\Co^*\times G,\Z),i>1$ induced by the composition of the projection $\Co^*\times G\to G$ and the inclusion $G\to \Co^*\times G$ is the identity on the primitive elements.

The second statement of the corollary follows from the first.
\end{proof}

The class $c_{r-1}(E')$ that appears in Lemma \ref{lkfundx} has a geometric interpretation in terms of the degrees of discriminant varieties, which we will now describe.  
Here is a setup that suffices for many applications. 
Let $X$ be an irreducible compact complex algebraic variety of dimension $d$ and let $E'$ be an algebraic vector bundle of rank $r>d$ over $X$. {Let $V\subset\Gamma(X,E')$ be a vector subspace}, and let $\Sigma\subset V$ be the set of all sections with non-empty zero locus. Moreover, set $\widetilde\Sigma=\{(s,x)\in V\times X\mid s(x)=0\}$. We require that there should be a Zariski open $X'\subset X$ such that the restriction of $\widetilde\Sigma$ to $X'$ is dense in $\widetilde\Sigma$ and is the total space of an algebraic vector bundle over $X'$ of rank $\dim V-r$. Note that if $E'$ is globally generated by elements of $V$, then one can take $X'=X$.

\begin{prop}\label{degreediscr}
If $\langle c_d(E'),[X]\rangle =0$, then all elements of $\Sigma$ have infinitely many zeroes and $\dim\Sigma< d+\dim V-r$. If $\langle c_d(E'),[X]\rangle\neq 0$, then a Zariski general element of $\Sigma$ has $k>0$ zeroes, $\dim\Sigma=d+\dim V-r$, and $k\deg \Sigma= \langle c_d(E'),[X]\rangle$.
\end{prop}

\begin{rmk}\label{rmkdegreediscr}
This proposition is a variant of \cite{GKZ}*{Theorem 3.3.10}. Note however that the version given in~\cite{GKZ} is not correct because a general section $\in\Sigma$ may have more than one zero: one can take for instance $X=\mathbb{P}^1(\Co)$ and $E'=\mathcal{O}(2)\oplus \mathcal{O}$. Then $E'$ is very ample as defined in \cite{GKZ}*{p.\ 110}, and $\Sigma\subset V=\Gamma(X,E')$ is a hyperplane, but $\langle c_d(E),[X]\rangle=2$. Nevertheless, the argument given in \cite{GKZ} can be easily adapted to give a proof of Proposition~\ref{degreediscr}, which we do in Appendix \ref{app_proofs}.
\end{rmk}

\subsection{Summary}\label{summaryappl} Here we summarise the results of this section that we will need in Section \ref{secexamples}.

\begin{thm}\label{mainthmappl}
Let $G$ be a connected complex reductive group that acts algebraically on a smooth compact complex algebraic variety $X$ of dimension $d$, and let $L$ be an $G$-equivariant algebraic line bundle over $X$. Recall that the ring $\Lambda^*_{J(L)}\subset \fr{*}(G,\Z)$ is generated by $S\big(e_G(J(L))\big)\bigm/y, y\in H_*(X,\Z)$ (see Notation \ref{lambdae}). Let $O:G\to\reg{X,L}$ be an orbit map.

\begin{enumerate}
\item Suppose that $\langle c_{d}(J(L)),[X]\rangle\neq 0$, {the discriminant $\Sing(L)=\Gamma(X,L)\setminus\reg{X,L}$ is equidimensional,} and $\Lambda^*_{J(L)}$ has finite index in $\fr{*}(G,\Z)$. Then the map $O^*$ in rational cohomology is surjective, the geometric quotient $\reg{X,L}/G$ exists and is affine, and
$$H^*(\reg{X,L},\Q)\cong H^*(G,\Q)\otimes H^*(\reg{X,L}/G,\Q)$$ both as rings and mixed Hodge structures.
For every $s\in\reg{X,E}$ the order of the stabiliser $G_s$ divides the index of $\Lambda^{\dim G}_{J(L)}$ in $H^{\dim G}(G,\Z)$.
\item Suppose that $J(L)$ is globally generated by holonomic sections (see Section~\ref{secsmoothsec}), and the map $O^*:H^*(\reg{X,E},\Q)\to H^*(G,\Q)$ is surjective. Then $\Lambda^*_{J(L)}$ has finite index in $\fr{*}(G,\Z)$.
\end{enumerate}
\end{thm}

\begin{proof} {To prove the first part, we note that by Lemma~{\ref{lkfundx}} we have $H^1(\reg{X,L},\Z)\neq 0$, which by the Alexander duality and our assumption on $\Sing(L)$ implies that} $\reg{X,L}$ is the complement of a hypersurface in an affine space. 
We then apply Corollary \ref{maincorollary2}, Theorem \ref{thmquotslice} and Proposition \ref{stab}.

The second part of the theorem follows from Proposition~\ref{mhsmotivation2}
\end{proof}


\begin{cor}\label{maincorappl}
Let $G$ and $X$ be as in Theorem \ref{mainthmappl}, and let $E$ be a $G$-equivaviant algebraic vector bundle over $X$ of arbitrary rank $r$. Suppose $G'$ is an arbitrary connected reductive subgroup of the group $\widetilde G$ (see Notation \ref{tildeg}) and take $L=\Oh_{\PP(E^*)}(1)$ (see Section \ref{secsmoothsec}). Let $O:G'\to\reg{X,E}$ be an orbit map.

\begin{enumerate}
\item Suppose that $\langle c_{d+r-1}(J(L)),[\PP(E^*)]\rangle \neq 0$, {the discriminant $\Sing(E)=\Gamma(X,E)\setminus\reg{X,E}$ is equidimensional,} and $\Lambda^*_{J(L),G'}$ has finite index in $\fr{*}(G',\Z)$. Then the map $O^*$ in rational cohomology is surjective, the geometric quotient $\reg{X,E}/G'$ exists and is affine, and
$$H^*(\reg{X,E},\Q)\cong H^*(G',\Q)\otimes H^*(\reg{X,E}/G',\Q)$$ both as rings and mixed Hodge structures. Moreover, for every $s\in\reg{X,E}$ the order of the stabiliser $G'_s$ divides the index of $\Lambda^{\dim G'}_{J(L),G'}$ in $H^{\dim G'}(G',\Z)$.
\item Suppose that $J(L)$ is globally generated by holonomic sections and $$O^*:H^*(\reg{X,E},\Q)\to H^*(G',\Q)$$ is surjective. Then $\Lambda^*_{J(L),G'}$ has finite index in $\fr{*}(G',\Z)$.
\end{enumerate}
\qed
\end{cor}

\begin{rmk}\label{remarkonconditionsinmaincorappl}
If $J(L)$ is globally generated by holonomic sections, then $\Sing(E)$ is irreducible. So if $E$ satisfies the former condition and $\langle c_{d+r-1}(J(L)),[\PP(E^*)]\rangle \neq 0$, then $\Lambda^*_{J(L),G'}$ has finite index in $\fr{*}(G',\Z)$ if and only if $O^*$ is surjective in rational cohomology. We also note that if $L$ is very ample, then $J(L)$ is globally generated by holonomic sections, and that $\Sing(E)$ is irreducible if the action of $\widetilde{G}$ on $\PP(E^*)$ is transitive; for line bundles the latter holds if and only if the action of $G$ on $X$ is transitive.
\end{rmk}

Recall that for every $y\in H_*(\PP(E^*),\Z)$ the image of 
$\big(S(e_{G'}(J(L)))\big)\bigm/y$ in $\fr{*}(G',\Z)$ is primitive by Remark~\ref{imageofs}. Let $i_l$ be the order of the cokernel of the map 
\begin{equation}\label{yetanothers}
H_{2(r-1+d)-2(l-1)}(\PP(E^*),\Z)\rightarrow P^{2l-1}, y\mapsto \big(S(e_{G'}(J(L)))\big)\bigm/y
\end{equation}
where $P^*\subset \fr{*}(G',\Z)$ denotes the graded group of primitive elements. 



\begin{cor}
\label{stabilizerisproduct}
We {keep the notation and assumptions} of Corollary \ref{maincorappl}. 
Suppose all $i_l$ are finite.
Then $\Lambda^*_{J(L),G'}$ has finite index in $\fr{*}(G',\Z)$ and the index of $\Lambda^{\dim G'}_{J(L),G'}$ in $H^{\dim G'}(G',\Z)$ is $\prod_{l} i_{l}$. In particular, for every $s\in \reg{X,E}$ the order $|G'_s|$ divides $\prod_{l} i_{l}$. \qed
\end{cor}

Finally, we note that if $H^1(G,\Z)=0$, $\aut(E/X)=\Co^*$, and $E'=J(\Oh_{\PP(E^*)}(1))$, then Corollary \ref{descentc*2ndrev} allows one to deduce the information about $\Lambda^*_{E', \widetilde{G}}$ that we need to apply Corollary \ref{maincorappl} to $G'=\widetilde G$ from the same information about $\Lambda^*_{E', G}$.

\section{Examples}\label{secexamples}

This section contains examples and methods for computing them. For our applications it will suffice to work with rational cohomology whilst keeping track of the integral lattices.

%


In this section we take $G$ to be a connected complex affine algebraic group, $P$ a parabolic subgroup of $G$, $X=G/P$, $E$ an algebraic $G$-equivariant vector bundle on $X$, and $V=\Gamma(X,E)$, the space of holomorphic sections of $E$, and explicitly describe in several examples the cohomology classes $S(e_{\widetilde{G}}(E'))$ modulo torsion; here $\widetilde{G}$ is the group defined in Section~\ref{sectionsvsloci}, $y\in H_*(\PP(E^*),\Z), E'=J(\Oh_{\PP(E^*)}(1)$ (see Section~\ref{secsmoothsec}), and the classes $S(e_{\widetilde{G}}(E'))$ live in $H^*(\tilde G,\Z)$. Recall that by Theorem~\ref{maintheorem}, in $\fr{*}(G,\Z)$ the classes $S(e_{\widetilde{G}}(E'))/y$ coincide with the pullbacks $O^*(\Lk_V^{\Sing}(y))$ under the orbit map $O:\widetilde{G}\to\reg{X,E}$. Recall also that if $E$ is a line bundle, then $\PP(E^*)=X,E'=J(E)$ and $\widetilde{G}=\Co^*\times G$, see Remark~\ref{rankonecoincidence} and Lemma~\ref{linebundletrivial}, and by Corollary~\ref{descentc*2ndrev} to calculate the image of $S(e_{\widetilde{G}}(E'))/y$ in $\fr{*}$ it suffices to do the same for $S(e_{G}(E'))/y$.

In Section~\ref{genalg} we describe a general procedure that starts with an arbitrary equivariant vector bundle $E'$ over $X$ with vanishing Euler class and calculates the classes $S(e_G(E'))$ modulo torsion. We illustrate this procedure for the complete flag varieties of $G=SL_3(\Co), Sp_4(\Co)$ and $G_2$ in Section~\ref{completeflagvarieties}. If $X$ is ``small'', then one can calculate $S(e_G(E'))$ directly, which we do in Section~\ref{projquad} for the bundles $\Oh(d)$ on projective spaces and non-degenerate quadrics. In Section~\ref{fano} we consider higher rank bundles on Pl\"ucker embedded Grassmannians and give applications to certain Fano varieties. Finally, in Section~\ref{summaryexamples} we summarise our results and {explain how they are related to known results about automorphism groups}.

\subsection{Generalities.}\label{genalg} Let us first recall a few facts about principal and induced bundles.

If $H$ is a Lie group, $\mathcal{P}\to X$ is a principal $H$-bundle and $R:H\to GL(V)$ is a representation, then we will denote the induced vector bundle on $X$ constructed using the natural {\it left} action of $H$ on $\mathcal{P}\times V$ by $\mathcal{P}\times_R V$, or $\mathcal{P}\times_H V$ when it is clear which representation is meant. If $H$ is a Lie subgroup of a Lie group $H'$, then we can take $\mathcal{P}=H'$. The resulting vector bundle $H'\times_R V$ over $X=H'/H$ is then $H'$-equivariant, and all $H'$-equivariant vector bundles over $H'/H$ are obtained in this way.


\begin{exmp}\label{tautlinebund}
Suppose $H'=GL_n(\Co)$ and $H$ is the stabiliser of a vector supspace $W\subset\Co^n$ of rank $k$. The action of $H$ on $W$, respectively $\Co^n/W$ corresponds to the tautological bundle $U$ of rank $k$, respectively the quotient bundle $Q$ of rank $n-k$ over the Grassmannian $\mathrm{Gr}(n,k)=H'/H$. The cotangent bundle $\Omega_{\mathrm{Gr}(n,k)}\cong \Hom(Q,U)$ as $H'$-equivariant vector bundles.
Note that if $k=1$, then the bundle $\Oh(1)$ over $H'/H=\PP^{n-1}(\Co)$ is $U^*$.

If $H=GL_n(\Co)$ and $R$ is the standard action of $H$ on $\Co^n$, then $EH\times_R\Co^n$ is the tautological rank $n$ bundle $\gamma^n$ over $BH$.
\end{exmp}

\begin{lmm}\label{inducedvectbund}
If $H$ is a Lie subgroup of a Lie group $H'$ and $R:H\to GL(V)$ is a representation of $H$, then there is a commutative diagram
$$
\begin{tikzcd}
H'\times_R V\arrow[r]\arrow[d]&(H'\times_R V)_{hH'}\arrow[d] & EH\times_R V \arrow[d]\arrow[l]\\
H'/H\arrow{r}{\alpha}&(H'/H)_{hH'} & BH\arrow{l}[swap]{\simeq}
\end{tikzcd}
$$
in which the vertical arrows are vector bundle projections, the top horizontal arrows are fibrewise maps of vector bundles and the bottom right horizontal arrow is a homotopy equivalence. \qed
\end{lmm}


\smallskip

Let now $E'$ be a $G$-equivariant vector bundle over $X=G/P$ with Euler class 0. In this subsection we sketch an algorithm that allows one to calculate $S(e_G(E'))$ and the resulting cohomology classes $\in \fr{*}(G,\Z)$. Recall that the ring $\fr{*}(G,\Z)$ is a free graded commutative $\Z$-algebra, see Remark~\ref{imagebargammaprimitive}.

%
In order to apply \ref{formulas1} we need the following data.

\begin{enumerate}
\item A system of generators $s_1,\ldots, s_n$ of $H^*(BG,\Q)$ such that $\bar\gamma(s_1),\ldots, \bar\gamma(s_n)$ are free multiplicative generators of $\fr{*}(G,\Z)$. (Note that we do not require that $s_1,\ldots, s_n$ themselves should be integral.)
\item The class $e_G(E')$ expressed in terms of $\beta^*(s_1), \ldots, \beta^*(s_n)$.
\item The map $\alpha^*:H^*(BP,\Q)\to H^*(X,\Q)$.
\end{enumerate}

We explain these steps in more detail in Sections~\ref{genidealbargamma}-\ref{arbp}; before we do that we make a few observations.

\begin{rmk}\label{sufficessimplyconn}
There is a complex semi-simple simply connected group $H$, a parabolic subgroup $P_H\subset H$ and an algebraic group homomorphism $H\to G$ with finite kernel that takes $P_H$ to $P$ and induces an isomorphism $H/P_H\to G/P$. Using the functoriality of $S$ (see Diagram~\ref{functorialityofs}) we see that for the purposes of calculating $S$ we may assume $G$ semi-simple and simply connected.

Moreover, for such a $G$ one can obtain a system of free multiplicative generators for $\fr{*}(G,\Z)$ by first doing the same for the simple factors of $G$ and then taking the exterior products (this follows from  the K\"unneth formula, see e.g.\ \cite{Sp81}*{Theorem 5.6.1}). So in particular, in step 1 of the above strategy it would suffice to consider the case of $G$ simple and simply connected.
\end{rmk}

\begin{ntt}
If $H$ is a complex algebraic group, then we denote its {\it character group} $\Hom(H,\Co^*)$ by $\mathfrak{X}(H)$.
\end{ntt}

Let $T\subset P$ be a maximal torus of $G$. There is an isomorphism between the character group $\mathfrak{X}(T)$ and $H^2(BT,\Z)$ that takes a character $\chi:T\to\Co^*$ to $c_1(ET\times_\chi\Co)$.
Let $\mathfrak{t}$ be the Lie algebra of $T$. There is an embedding $\mathfrak{X}(T)\to \mathfrak{t}^*=\mathop{\mathrm{Hom}}(\mathfrak{t},\Co)$ that takes $\chi\in\mathfrak{X}(T)$ to the differential $d\chi$ at the identity element. This gives us an integral structure on $\mathfrak{t}^*$. We identify $H^*(BT,\Co)$ with the symmetric $\Co$-algebra $\mathrm{Sym}^*(\mathfrak{t}^*)$ on $\mathfrak{t}^*$ and $H^*(BT,\Z)$ with the symmetric $\Z$-algebra $\mathrm{Sym}^*(\mathfrak{X}(T))$ on $\mathfrak{X}(T)\subset \mathfrak{t}^*$.

\begin{ntt} We denote the $i$-th elementary symmetric polynomial in $x_1,\ldots, x_n$ by $\sigma_i(x_1,\ldots, x_n)$.
\end{ntt}

The following lemma is a straightforward consequence of Lemma \ref{inducedvectbund}.

\begin{lmm}\label{inducedvectbund1}
Let $R:P\to GL(V)$ be a complex representation and set $\bar E=G\times_R V$. Let $\chi_1,\ldots,\chi_{\dim V}\in\mathfrak{X}(T)$ be the weights of its restriction to $T$. Let us identify $H^*(BP,\Q)$ with its image in $H^*(BT,\Q)$ and the homotopy quotient $X_{hG}$ with $BP$. The total Chern class of the bundle $\bar E_{hG}$ over $X_{hG}$ is then
\begin{equation}\label{totalcherneg}
\prod_{i=1}^{\dim V}(1+\chi_i)\in H^*(BP,\Q) \subset H^*(BT,\Q).
\end{equation}
Moreover, $\alpha^*(\sigma_i(\chi_1,\ldots, \chi_{\dim V}))=c_i(\bar E)$ for every $i=1,\ldots, \dim V$. \qed
\end{lmm}

\smallskip

Until the beginning of Section~\ref{arbp} we suppose that $P$ is a Borel subgroup $B$ of $G$, and then we will indicate the modifications that are necessary if $P$ is arbitrary. The classifying space of $B$ is homotopy equivalent to $BT$.

The Weyl group $W=N_G(T)/Z_G(T)$ of $G$ acts on $H^*(BT,\Q)$, and $\beta^*:H^*(BG,\Q)\to H^*(BT,\Q)$ induces an isomorphism between $H^*(BG,\Q)$ and the ring of invariants $H^*(BT,\Q)^W$.
\begin{ntt}
In the sequel we will often identify an element $s\in H^*(BG,\Q)$ with its image in $H^*(BT,\Q)$, and vice versa. In particular, for $x\in H^*(BT,\Q)^W$ we will write $\bar\gamma(x)$ instead of $\bar\gamma((\beta^*)^{-1})(x))$.
\end{ntt}

\subsubsection{Generators of $I_1(X, \Q)$ and the map $\bar\gamma$.}\label{genidealbargamma} Recall that for our purposes it suffices to calculate $\bar\gamma$ for $G$ simple and simply-connected (see Remark~\ref{sufficessimplyconn}). There does not seem to be a way to do this uniformly for all $G$. For example, while calculating $H^*(G,\Q)$ is straightforward, identifying the integral lattice $\fr{*}(G,\Z)\subset H^*(G,\Q)$ in a functorial manner is less so, cf.\ the calculations below for orthogonal and Spin groups.

In order to make our procedure explicit and applicable to a reasonably large class of examples, in Section~\ref{genidealbargamma} we take $G$ to be a classical Lie group or $G_2$ and describe a system of generators $s_1,\ldots, s_n$ of $H^*(BG,\Q)$ such that $\bar\gamma(s_1),\ldots, \bar\gamma(s_n)$ are free multiplicative generators of $\fr{*}(G,\Z)$. The details for the exceptional Lie groups other than $G_2$ will be presented elsewhere. {If the quotient of the Lie algebra of $G$ by the solvable radical is a direct sum of copies of the classical simple complex Lie algebras, we say that $G$ is} {\it of classical type}.


We denote the set of all diagonal square complex matrices by $D$ and let $\varepsilon_i:D\to\Co$ be the map that takes a matrix $\in D$ to the $i$-th entry on the diagonal.

\begin{exmp}\label{gln} Set $G=GL_n(\Co)$. Let $T=G\cap D$, the group of all invertible diagonal matrices. Then the Lie algebra $\mathfrak{t}$ of $T$ is $D$, and the elements $\varepsilon_1,\ldots, \varepsilon_n$ generate the integral lattice $\mathfrak{X}(T)\subset\mathfrak{t}^*$ and freely generate the algebra $\mathrm{Sym}^*(\mathfrak{X}(T))=H^*(BT,\Z)$. The Weyl group $W$ is the group of permutations of $\{\varepsilon_1,\ldots, \varepsilon_n\}$. We set $s_i=\sigma_i(\varepsilon_1,\ldots,\varepsilon_n)$. The ring of invariants $H^*(BT,\Q)^W$ is generated by $s_1,\ldots, s_n$, so these polynomials generate $I_1(X,\Q)$. Note that by Example~\ref{tautlinebund} and Lemma~\ref{inducedvectbund1}, $s_i$ viewed as an element of $H^*(BG,\Q)$ is $c_i(\gamma^n)$, so $s_i, i=1,\ldots, n$ are in fact free multiplicative generators of $H^*(BG,\Z)$. 

It follows from the Leray-Serre spectral sequence of the bundle $EG\to BG$ that there are free multiplicative generators $\mathbf{x}_1,\ldots,\mathbf{x}_{2n-1}$ (where the subscript denotes the degree) of $H^*(G,\Z)=\fr{*}(G,\Z)$ such that $\mathbf{x}_{2i-1}$ transgresses to $s_i$ modulo the ideal generated by $s_{<i}$. By Remark~\ref{gamma and transgression} we have $\bar\gamma(s_i)=\mathbf{x}_{2i-1},i=1,\ldots, n$. Note that $\mathbf{x}_1,\ldots,\mathbf{x}_{2n-1}$ can also be constructed inductively as follows: we take the top generator $\mathbf{x}_{2n-1}$ to be the pullback of an appropriately chosen generator of $H^{2n-1}(\Co^n\setminus\{0\},\Z)$ under $o^*$ where $o:GL_n(\Co)\to\Co^n\setminus\{0\}$ is an orbit map, and we require the remaining generators to restrict to the same generators for $GL_{n-1}(\Co)$.
\end{exmp}

\begin{exmp}\label{sln}Set $G=SL_n(\Co)$ and $T=G\cap D$. For $i=1,\ldots, n$ let $\epsilon_i$ be the restriction of $\varepsilon_i$ to the Lie algebra $\mathfrak{t}$ of $T$. The elements $\epsilon_1,\ldots,\epsilon_n$ generate $\mathfrak{X}(T)$ subject to the relation $\sum\epsilon_i=0$, and we have $H^*(BT,\Q)\cong\Q[\epsilon_1,\ldots,\epsilon_n]/(\sum\epsilon_i)$. The Weyl group is the same as in the previous example, and the ring $H^*(BT,\Q)^W$ is generated by $s_i,i=2,\ldots,n$ where $s_i$ is the image of $\sigma_i(\epsilon_1,\ldots,\epsilon_n)$ in $H^*(BT.\Q)$. By abuse of notation we denote the restriction of $\mathbf{x}_i$ from the previous example to $G$ also by $\mathbf{x}_i$. Then for every $i=2,\ldots, n$ we have $\bar\gamma(s_i)=\mathbf{x}_i$.
\end{exmp}

\begin{exmp}\label{sp2n} Set $G=Sp_{2n}(\Co)$. We embed $G$ in $GL_{2n}(\Co)$ as the stabiliser of the skew-symmetric bilinear form with matrix $\begin{pmatrix} 0& I_n\\-I_n & 0\end{pmatrix}$. Set $T=G\cap D$. Note that the Lie algebra $\mathfrak{t}$ of $T$ is $$\{\mathrm{diag}(x_1,\ldots, x_n,-x_1,\ldots,-x_n)\mid\mbox{all }x_i\in\Co\}.$$
The elements $\varepsilon_i,i\leq n$ generate the group $\mathfrak{X}(T_1)\subset\mathfrak{t}^*$ and freely generate the algebra $\mathrm{Sym}^*(\mathfrak{X}(T))=H^*(BT,\Z)$. The Weyl group $W$ is the group of all permutations and sign changes of $\{\varepsilon_1,\ldots,\varepsilon_n\}$, so the ring $H^*(BT,\Q)^W$ is generated by $s_1,\ldots,s_n$ where each $s_i=\sigma_i(\varepsilon_1^2,\ldots,\varepsilon_n^2)$. By using power sums one can check that, modulo decomposable elements, the polynomial $\sigma_{i}(\varepsilon_1,\ldots,\varepsilon_{2n})$ restricted to $\mathfrak{t}$ is 0 if $i$ is odd and $(-1)^{\frac{i}{2}} s_{\frac{i}{2}}$ if $i$ is even. Note that it follows from the definition of $\bar\gamma$ (see Notation~\ref{gamma}) that $\bar\gamma$ is zero on decomposable elements.

Let us now construct a system of free multiplicative generators of $H^*(Sp_{2n}(\Co),\Z)$ and compare it with the one we constructed above for $H^*(GL_{2n}(\Co),\Z)$. The group $Sp_{2n}(\Co)$ is homotopy equivalent to $GL_n(\HH)\subset GL_{2n}(\Co)$, as both deformation retract onto the quaternionic unitary group $Sp_{2n}(\Co)\cap U(2n)$. We can construct a system of free multuplicative generators for $GL_n(\HH)$ in the same way as for $GL_n(\Co)$ above in Example~\ref{gln}. Moreover, using the commutative diagram
\begin{equation*}
\begin{tikzcd}
GL_{n-1}(\HH)\arrow[r]\arrow[d] & GL_n(\HH)\arrow[r]\arrow[d] & \HH^n\setminus\{0\} \arrow[d]\\
GL_{2n-2}(\Co)\arrow[r] & GL_{2n}(\Co)\arrow[r] & \Co^{2n}\setminus\{0\}
\end{tikzcd}
\end{equation*}
(in which both horizontal arrows on the right are orbit maps) we see that we get a system of free multiplicative generators of $H^*(Sp_{2n}(\Co),\Z)$ by restricting the classes $\mathbf{x}_i$ from the Example~\ref{gln} with $i\equiv 3\mod 4$ from $GL_{2n}(\Co)$ to $Sp_{2n}(\Co)$. The map $\bar\gamma$ is functorial in $G$, so using the previous paragraph we see that $\mathbf{y}_{4i-1}=\bar\gamma(s_i)\in H^*(G,\Q),i=1,\ldots,n$ are integral and form a system of free multiplicative generators of $H^*(Sp_{2n}(\Co),\Z)$.
\end{exmp} 

In order to handle the orthogonal and Spin groups we need the following lemma.

\begin{lmm}\label{auxso}
\begin{enumerate}
\item The inclusion $SO_{2n-1}(\R)\subset SO_{2n+1}(\R)$ induces a surjective map of $H^*(-,\Z)$ (and hence so does $SO_{2n-1}(\R)\subset SO_{2n}(\R)$).
\item Let $o:SO_{2n}(\R)\to S^{2n-1}$ be the orbit map of an element of $S^{2n-1}$. The group $o^*(H^{2n-1}(S^{2n-1},\Z))$ is $\cong\Z$ and is a direct summand of $H^{2n-1}(SO_{2n}(\R),\Z)$.
\item Let $V_2(\R^{2n+1})$, respectively $V_2(\Co^{2n+1})$ be the Stiefel manifold of orthogonal 2-frames in $\R^{2n+1}$, respectively unitary 2-frames in $\Co^{2n+1}$, and let $i:V_2(\R^{2n+1})\to V_2(\Co^{2n+1})$ be the natural inclusion. Then the image of the integral cohomology map $i^*$ in degree $4n-1$ has index 2.
\end{enumerate}
\end{lmm}

\begin{proof}
The first two statements can be checked using the explicit CW-structure on $SO_k(\R)$ given e.g.\ in \cite{H02}*{\S 3D}. Let us prove the third statement.

We equip $\Co^{2n+1}$ with the standard Hermitian metric. Let $e$ be the vector $\frac{1}{\sqrt{2}}(i,1,0,\ldots,0)$ in the unit sphere $S_{\Co}\cong S^{4n+1}\subset\Co^{2n+1}$, and set $K=\{ze\mid z\in\Co, |z|=1\}$. We construct a section $f$ of the projection $p:V_2(\Co^{2n+1})\to S_{\Co}$ over the complement $S_{\Co}\setminus K$ by taking $f(x),x\in S_{\Co}\setminus K$ to be the couple $(x,$ the normalised orthogonal projection of $e$ to the orthogonal complement of $x)$. Let $M$ be the image of $f$. 
The circle $K$ does not intersect the real unit sphere $S_{\R}\cong S^{2n}\subset \R^{2n+1}\subset \Co^{2n+1}$ and bounds a smooth embedded 2-disk $D\subset S_{\Co}$ that for dimension reasons may also be assumed not to intersect $S_{\R}$.

Let $D'$ be a closed neighbourhood of $D$ that is diffeomorphic to a $4n+1$-disk and does not intersect $S_{\R}$, and let $D''$ be $\overline{S_{\Co}\setminus D'}$. The intersection $M\cap p^{-1}(\partial D')=f(\partial D')$ is diffeomorphic to $S^{4n}$, and so it is null homologous in $p^{-1}(D')\simeq S^{4n-1}$. Using this one can construct a smooth singular cycle $c$ in $V_2(\Co^{2n+1})$ such that every singular simplex in $c$ lies either in $p^{-1}(D')$ or in $p^{-1}(D'')$, and the simplices of the second type form a triangulation of $M\cap D''$ and are transversal to $V_2(\R^{2n+1})$. Let $Dc$ be the Poincar\'e dual cohomology class of $c$. Note that $Dc$ is a generator of $H^{4n-1}(V_2(\Co^{2n+1}),\Z)$, which can be checked by restricting $Dc$ to $p^{-1}(x),x\in S_{\Co}\setminus D'$. Finally, a straightforward check shows that $M$ intersects $V_2(\R^{2n+1})$ transversally at two points
$$((\pm 1,0,\ldots, 0),(0,1,0,\ldots,0))$$
which have the same sign: if the signs were opposite, the map of $H^{4n-1}(-,\Q)$ induced by the inclusion $SO_{2n+1}(\R)\subset U_{2n+1}$ would be zero, which is not the case, see e.g.\ \cite{MT91}*{Theorem 6.7 (2)}. This proves the third statement of the lemma.
\end{proof}

\begin{exmp}\label{so2n+1} Set $G=SO_{2n+1}(\Co)$. We embed $G$ in $GL_{2n+1}(\Co)$ as the stabiliser of the symmetric bilinear form with matrix $\begin{pmatrix} 0& I_n&0\\I_n & 0& 0\\0 & 0 & 1\end{pmatrix}$. We again set $T=G\cap D$, and the Lie algebra $\mathfrak{t}$ of $T$ is $$\{\mathrm{diag}(x_1,\ldots, x_n,-x_1,\ldots,-x_n,0)\mid\mbox{all }x_i\in\Co\}.$$ The lattice $\mathfrak{X}(T)$ is again generated by $\varepsilon_1,\ldots,\varepsilon_n$ and the Weyl group $W$ is the same as in Example~\ref{sp2n}. As above we set $s_i=\sigma_i(\varepsilon_1^2,\ldots,\varepsilon_n^2),i=1,\ldots, n$ and find that these elements generate $H^*(BT,\Q)^W$ and that, modulo decomposable elements, $\sigma_{i}(\varepsilon_1,\ldots,\varepsilon_{2n+1})$ restricted to $\mathfrak{t}$ is 0 if $i$ is odd and $(-1)^{\frac{i}{2}} s_{\frac{i}{2}}$ if $i$ is even.

Using Lemma~\ref{auxso} we see that there exist elements $\mathbf{z}'_{i}\in \fr{i}(SO_{2n+1}(\R),\Z), i=3,7,\ldots,4n-1$ that form a system of free multiplicative generators and such that for $i\equiv 3\mod 4$ the restriction of $\mathbf{x}_i\in H^i(GL_{2n+1}(\Co),\Z)$ to $SO_{2n+1}(\R)$ is $2\mathbf{z}'_i$. (To define the classes $\mathbf{z}'_i$ we proceed by induction on $n$, cf.\ Example~\ref{gln}; the top generator $\mathbf{z'}_{4n-1}$ is set to be the pullback of a generator of $H^{4n-1}(V_2(\R^{2n+1}),\Z)$ under an orbit map.) As in Example~\ref{sp2n} we conclude that the classes $\mathbf{z}_{4i-1}=\frac{1}{2}\bar\gamma(s_i)\in H^*(G,\Q),i=1,\ldots,n$ are in fact integral and form a system of free multiplicative generators of $\fr{*}(SO_{2n+1}(\Co),\Z)$.
\end{exmp}

\begin{exmp}\label{spin2n+1} The case of $G=Spin_{2n+1}(\Co)$ follows from the previous one with minor modifications. Let $T,\mathfrak{t}, W$ and $s_i,i=1,\ldots,n$ be the same as in Example~\ref{so2n+1}. Let $T'\subset G$ be the torus with Lie algebra $\mathfrak{t}$. The group $W$ acts on $H^*(BT',\Q)$ and the ring $H^*(BT',\Q)^W$ is again generated by $s_1,\ldots, s_n$. Note however that the character group $\mathfrak{X}(T')\subset \mathfrak{t}^*$ is strictly larger than $\mathfrak{X}(T)$: it is the set of all $x\in\mathfrak{t}^*$ such that the coordinates of $x$ in the basis $\{\varepsilon_1,\ldots,\varepsilon_n\}$ are either all integer or all half-integer; this group contains $\mathfrak{X}(T)$ as a subgroup of index 2. 

Let $p:Spin_{2n+1}(\Co)\to SO_{2n+1}(\Co)$ be the covering map. The map
$$p^*:\fr{*}(SO_{2n+1}(\Co),\Z)\to \fr{*}(Spin_{2n+1}(\Co),\Z)$$ takes all classes $\mathbf{z}_i, i=3,7,\ldots, 4n-1$ but one to free multiplicative generators, and the remaining class to twice a free multiplicative generator. 
%
It follows from \cite{Pit91}*{p.~122 and Section 7.4} that this class is in fact $\mathbf{z}_{4t-1}$ where $t$ is the highest power of 2 that is $\leq n$.
So we get a system of free generators of $\fr{*}(Spin_{2n+1}(\Co),\Z)$ by taking $\frac{1}{4}\bar\gamma(s_t)$ and $\frac{1}{2} \bar\gamma(s_i), i=1,\ldots, n,i\neq t$.
\end{exmp}

\begin{exmp}\label{so2n} Set $G=SO_{2n}(\Co),n>1$. We embed $G$ in $GL_{2n}(\Co)$ as the stabiliser of the symmetric bilinear form with matrix $\begin{pmatrix} 0& I_n\\I_n & 0\end{pmatrix}$. Let $T,\mathfrak{t},s_i,i=1,\ldots,n$ be as in Example~\ref{sp2n}. The Weyl group this time will be the group of permutations and sign changes of $\{\varepsilon_1,\ldots,\varepsilon_n\}$ that only involve an even number of signs. So the ring $H^*(BT,\Q)^W$ will be generated by $s_1,\ldots,s_{n-1}$ and the Pfaffian $s=\prod \varepsilon_i$; note that $s_n=s^2$.

Let us calculate $\bar\gamma(s)$. Let $\mathbf{z}'\in H^{2n-1}(SO_{2n}(\R),\Z)$ be a generator of the group $o^*(H^{2n-1}(S^{2n-1},\Z))$ from Lemma~\ref{auxso}. We claim that $\bar\gamma(s)=\pm\mathbf{z}'$. To prove this notice that firstly, there is a commutative diagram
$$
\begin{tikzcd}
ESO_{2n}(\R)\arrow[r]\arrow[d] &  S(\gamma^{2n}) \arrow[d]\\
BSO_{2n}(\R)\arrow[r,"="] & BSO_{2n}(\R)
\end{tikzcd}
$$
of fibre bundles, where $S(\gamma^{2n})$ denotes the spherisation of the tautological (real) oriented vector bundle $\gamma^{2n}$ over $BSO_{2n}(\R)$, and secondly, the pullback of $\gamma^{2n}$ to $BT$ splits as a sum of $n$ oriented bundles of rank 2 and has Euler class $\prod \varepsilon_i=s$. From these observations it follows that the element $\mathbf{z}'\in E^{0,2n-1}_2$ of the Leray-Serre spectral sequence of $ESO_{2n}(\R)\to BSO_{2n}(\R)$ is transgressive; moreover, by Proposition~\ref{transgression} it transgresses to $\pm e(\gamma^{2n})\in H^{2n}(BSO_{2n},\Z)$, which pulls back to $\pm s\in H^{2n}(BT,\Z)$ under $BT\to BSO_{2n}(\R)$. This implies that $\bar\gamma(s)=\pm\mathbf{z}'$, see Remark~\ref{gamma and transgression}.

As in Example \ref{so2n+1} we deduce using Lemma~\ref{auxso} that there exist elements $\mathbf{z}_i'\in \fr{i}(SO_{2n}(\R),\Z), i=3,7,\ldots,4n-5$ that together with $\mathbf{z}'$ form a system of free multiplicative generators of $\fr{*}(SO_{2n}(\Co),\Z)$. For $i\equiv 3\mod 4$ the restriction of $\mathbf{x}_i\in H^i(GL_{2n}(\Co),\Z)$ to $SO_{2n}(\R)$ is $2\mathbf{z}_i'$. We can now conclude as above in Example~\ref{so2n+1} that the elements $\mathbf{z}_{4i-1}=\frac{1}{2}\bar\gamma(s_i)\in H^*(G,\Q),i=1,\ldots,n-1$ and $\mathbf{z}=\bar\gamma(s)$ are integral and form a system of free multiplicative generators of $\fr{*}(SO_{2n}(\Co),\Z)$.
\end{exmp}

\begin{exmp} The case $G=Spin_{2n}(\Co),n>1$ is very similar to Examples~\ref{spin2n+1} and \ref{so2n}, so we will just state the answer. Let $T,\mathfrak{t},W,s_i,i=1,\ldots,n$ and $s$ be as in Example~\ref{so2n}. Take $T'\subset G$ to be the torus with Lie algebra $\mathfrak{t}$. As in the previous example, the group $W$ acts on $H^*(BT',\Q)$ and the ring $H^*(BT',\Q)^W$ is generated by $s_1,\ldots, s_{n-1},s$. 
Moreover, let $t$ be the highest power of 2 that is $\leq n-1$. The elements $\bar\gamma(s), \frac{1}{4}\bar\gamma(s_t)$ and $\frac{1}{2}\bar\gamma(s_i), i=1,\ldots, n-1, i\neq t$ are integral and form a system of free multiplicative generators of $\fr{*}(Spin_{2n}(\Co),\Z)$.
\end{exmp}

\begin{exmp}\label{g2} Set $G$ to be the (unique) complex simple group of type $G_2$. Below we deduce information about~$G$ via an embedding $G\subset Spin_7(\Co)$, so we let $T,\mathfrak{t},W,\varepsilon_i, s_i$ and $\mathbf{z}_{4i-1},i=1,2,3$ be as in Example~\ref{so2n+1} for $n=3$. Let $K\subset G$ be a maximal compact subgroup. Then $K$ is a Lie subgroup of $Spin_7(\R)$, all such subgroups are conjugate and the quotient space $Spin_7(\R)/K\cong S^7$, see e.g.\ \cite{Var01}*{Section 2, Theorem 3}. It follows from the fact that $H^5(K,\Z)=0$ (see e.g.\ \cite{AK16}*{Theorem 2.18}) that all differentials of the Leray-Serre spectral sequence of the fibre bundle $K\subset Spin_7(\R)\to S^7$ that originate in $E^{0,11}_{\geq 2}$ are zero, so the inclusion $K\subset Spin_7(\R)$ induces a surjective map of the integral cohomology groups modulo torsion, and by Example~\ref{spin2n+1} so does the composite map $i:K\to Spin_7(\R) \to SO_7(\R)$, so $\mathbf{w}_3=i^*(\mathbf{z}_3)$ and $\mathbf{w}_{11}=i^*(\mathbf{z}_{11})$ are free multiplicative generators of $\fr{*}(K,\Z)$.

Let $\pi\colon Spin_7(\R)\to GL_8(\R)$ be the unique irreducible representation of dimension 8, and let $\pi_{\Co}: Spin_7(\Co)\to GL_8(\Co)$ be its complexification, which is also irreducible (see \cite{Var01}) and has weights $t_1\varepsilon_1+t_2\varepsilon_2+t_3\varepsilon_3$ where $t_1,t_2,t_3\in\left\{\pm\frac{1}{2}\right\}$. The group $K\subset Spin_7(\R)$ is the stabiliser of a non-zero element under $\pi$, so $G\subset Spin_7(\Co)$ is the stabiliser of an $x\in\Co^8\setminus\{0\}$ under $\pi_{\Co}$. By conjugating $G\subset Spin_7(\Co)$ if necessary we may assume that a maximal torus $T_G$ of $G$ is $\subset T$, so $x$ is a weight vector. The Weyl group $W$ acts transitively on the weights of $\pi_{\Co}$, so we may further assume that the weight that corresponds to $x$ is $\frac{1}{2}(\varepsilon_1+\varepsilon_2+\varepsilon_3)$. The Lie algebra $\mathfrak{t}_G\subset \mathfrak{t}$ of $T_G$ is then given by $\sum\varepsilon_i=0$ and we have $$H^*(BT_G,\Q)\cong H^*(BT,\Q)/(\varepsilon_1+\varepsilon_2+\varepsilon_3)=\Q[\varepsilon_1,\varepsilon_2,\varepsilon_3]/(\varepsilon_1+\varepsilon_2+\varepsilon_3);$$ note that the long roots of $G$ are the restrictions of the roots $\varepsilon_i-\varepsilon_j$ of $SO_7(\Co)$, while the short roots are the restrictions of $\pm(\varepsilon_i+\varepsilon_j)$ and $\pm\varepsilon_i$; here $i,j\in\{1,2,3\},i\neq j$. Set $s_{1,G_2}$, respectively $s_{3,G_2}$ to be the image of $s_1=\sum\varepsilon_i^2$, respectively $s_3=\prod\varepsilon_i^2$ in $H^*(BT_G,\Q)$. The elements $\frac{1}{2}\bar\gamma (s_{1,G_2})=\mathbf{w}_{3}$ and $\frac{1}{2}\bar\gamma (s_{3,G_2})=\mathbf{w}_{11}$ are free multiplicative generators of $\fr{*}(G,\Z)$.
\end{exmp}

\subsubsection{Expressing $e_G(E')$ in terms of the generators of $I_1(X,\Q)$} Let $X=G/B$ and $E'\to X$ be as above in the beginning of Section~\ref{genalg}. Suppose that we have a system $s_1,\ldots, s_n$ of homogeneous generators of $I_1(X,\Q)$. Since $e(E')=0$, the equivariant Euler class $e_G(E')\in I_1(X,\Q)$ by Propositions~\ref{seceulerse} and~\ref{s1sproj}, so we can write it as 
\begin{equation}\label{linsyst}
e_G(E')=\sum p_i s_i
\end{equation}
where each $p_i\in H^*(BT,\Q)$ is homogeneous of degree $\rk E'-\deg s_i$. Note that \ref{linsyst} is a finite-dimensional linear system of equations in $p_1,\ldots, p_n$. 
%

In practice however, instead of solving \ref{linsyst} directly it is more convenient to use Gr\"obner bases for the ideals generated by the Weyl invariants. In \cite{MS03} T.~Mora and M.~Sala construct a system of homogeneous generators for the ideal $I\subset \Q[t_1,\ldots,t_n]$ generated by the elementary symmetric polynomials and prove that it is a Gr\"obner basis with respect to any monomial ordering $<$ such that $t_1<t_2<\cdots <t_n$. This takes care of the case $G=GL_n(\Co)$, and a similar construction can be given for any other reductive group $G$. The details will be presented elsewhere.

\subsubsection{The map $\alpha^*$ and calculating the answer.}\label{the map alpha} Here we assume that that we have homogeneous generators $s_1,\ldots, s_n$ of the ideal $I_1(X,\Q)$ that have been chosen so that $\bar\gamma(s_1),\ldots, \bar\gamma(s_n)$ is a system of free multiplicative generators of $\fr{*}(G,\Z)$. Writing $e_G(E')=\sum p_i s_i$ with $p_i$ as in~\ref{linsyst}, and applying \ref{formulas1} we get
$$S(e_G(E'))=S_1(e_G(E'))=\sum \bar\gamma(s_i)\times \alpha^*(p_i)\in \fr{*}(G\times X,\Z)\subset H^*(G\times X,\Q).$$

So if $y\in H_*(X,\Z)$, then
\begin{equation}\label{explicits1gb}
S(e_G(E'))/y=\sum \langle \alpha^*(p_i),y\rangle\bar\gamma(s_i)\in \fr{*}(G,\Z)\subset H^*(G,\Q).
\end{equation}
The left hand side of \ref{explicits1gb} is integral for every $y\in H_*(X,\Z)$, so it follows from our assumption on $s_i$'s that all classes $\alpha^*(p_i)$ are integral too.

For our applications we will need to determine whether the ring $\Lambda^*_{E'}$ (see Notation \ref{lambdae}) has finite index in $\fr{*}(G,\Z)$, and if it does, to calculate the index of $\Lambda^k_{E'}$ in $H^k(G,\Z)$ where $k$ is the top degree in which the cohomology of $G$ is non-zero, see Theorem \ref{mainthmappl}. In other words, we have to determine the image of the homomorphism
\begin{equation}\label{anothers}
y\mapsto S(e_G(E'))/y.
\end{equation}
Recall that all integral elements in $\im\bar\gamma\subset H^*(G,\Q)$ are primitive (Remark~\ref{imagebargammaprimitive}) and note that both the homology and cohomology of $G/B$ are torsion free (\cite{BGG}). Therefore we can consider \ref{anothers} as a homomorphism acting between the free graded abelian groups $H_{*}(G/B,\Z)$ and $P^*\subset \fr{*}(G)$; note also that it maps $H_{2\rk(E')-2l}$ to $P^{2l-1}$ for every fixed $l\geq 1$. So acting degreewise, we set $m_l=\rk P^{2l-1}$ and let $r_l$, respectively $i_l$ be the rank, respectively the order of the cokernel of~\ref{anothers} restricted to $H_{2\rk(E')-2l}$, cf.\ Corollary~\ref{stabilizerisproduct}. 

\begin{prop}
The ring $\Lambda^*_{E'}$ has finite index in $\fr{*}(G,\Z)$ if and only if all $i_l$ are finite, in which case $\prod_l i_l$ is the index of $\Lambda^k_{E'}$ in $H^k(G,\Z)$. \qed
\end{prop}

In order to find the numbers $r_l$ and $i_l$ one can choose bases for the source and for the target and write down the corresponding matrix for \ref{anothers}. 
Bases for for $P^*$ for classical Lie groups and $G_2$ were described in Section~\ref{genidealbargamma}. In~\cite{BGG}, I.~Bernstein, I.~Gelfand and S.~Gelfand give a basis for $H_*(G/B,\Z)$ and an integration formula that allows one to evaluate $\alpha^*(f), f\in H^*(BT,\Z)$ on the elements of the basis.

In more detail, let us assume that $G$ is semi-simple and simply connected (which we are allowed to do by Remark~\ref{sufficessimplyconn}), and suppose we have chosen a system $\Pi$ of simple roots of $G$. For $\gamma$ a root of $G$ let $\sigma_\gamma$ denote the reflection in the orthogonal complement of $\gamma$. There is a $\Z$-basis of $H_*(G/B,\Z)$ the elements of which are in bijection with the elements of the Weyl group $W$ of $G$. An element $w\in W$ corresponds to a basis element $e_w\in H_{2\ell(w)}(G/B,\Z)$ where $\ell(w)$ is the length of $w$ with respect to the generators $\sigma_\gamma,\gamma\in \Pi$. (In \cite{BGG} $e_w$ is denoted $s_w$; we use the notation $e_w$ to avoid confusion with the generators $s_1,\ldots, s_n$ of $H^*(BG,\Q)$.) Theorems 3.4 and 4.1 of \cite{BGG} allow one to evaluate $\alpha^*(f)$ for a given $f\in\mathrm{Sym}^*(\mathfrak{X}(T))\otimes\Q=H^*(BT,\Q)$ on $e_w$ once $w$ has been written as a product of $\ell(w)$ reflections $\sigma_\gamma,\gamma\in \Pi$. 
We do not reproduce this procedure here, but we give explicit examples in Section~\ref{completeflagvarieties}.

Returning to the question of calculating $i_l$ and $r_l$, let $t_1,\ldots t_{m_l}\in \{s_1,\ldots, s_n \}$ be the elements of degree $2l$. Let $q_1,\ldots, q_{m_l}$ be the coefficients of $t_1,\ldots, t_{m_l}$ in $\sum p_i s_i$. To calculate $r_l$ and $i_l$ for a given $l$ we apply the integration formula to $f=q_j, j=1,\ldots, m_l$ and $w$ an arbitrary element of length $\rk E'-l$. Since we wish in fact to calculate $r_l$ and $i_l$ for all $l\geq 1$, we need to express every $w\in W$ of length $<\rk E'$ as a product of $\ell(w)$ reflections $\sigma_\gamma,\gamma\in \Pi$. This can be done in a straightforward way by induction on $\ell(w)$ using the fact that the word problem for $W$ is easy to solve e.g.\ by applying an explicit ``small'' faithful permutation action of $W$ (see e.g.\ \cite{BB05}*{Chapter 8} for details for $W$ of type $B,C$ and $D$).

%

\subsubsection{Arbitrary $P$.}\label{arbp} Now let $P$ be an arbitrary parabolic subgroup of $G$. Let $B\subset P$ be a Borel subgroup and let $p:G/B\to G/P$ be the projection. Using \ref{functorialityofs} and the functoriality of the $/$-product we get $$(S(e_G (E'))/p_*(y)=\big(S(e_G(p^*(E'))\big)\bigm/y$$ for every $y\in H_*(G/B,\Z)$. Note that $p_*(H_*(G/B,\Z))=H_*(G/P,\Z)$ by \cite{BGG}*{Corollary 5.4}, which implies $\Lambda^*_{p^*(E')}=\Lambda^*_{E'}$. So the case of arbitrary $P$ reduces to the case $P=B$ described above. Note that in the case $P\neq B$ our task is simplified somewhat by the fact that we don't need to take into account all basis elements $e_w\in H_*(G/B,\Z)$: I.~Bernstein, I.~Gelfand and S.~Gelfand define a subset $W^1_\Theta$ of $W$ (\cite{BGG}*{Proposition 5.1}) and prove that $p_*(e_w)=0$ for $w\not\in W^1_\Theta$ and that the classes $p_*(e_w),w\in W^1_\Theta$ form a $\Z$-basis of $H_*(G/P,\Z)$ (\cite{BGG}*{Corollary 5.3}).

Finally, we note that for ``small' $G/P$ it is possible to calculate the map $\alpha^*$ directly, i.e.\ without the integration formula. In the next subsection we do this for projective spaces and quadrics.

\subsection{Projective spaces and quadrics.}\label{projquad}

In the rest of this section we will write $\Lk(y),y\in H_*(X,\Z)$ instead of $\Lk_V^{\Sing}(y)$ (see Definition~\ref{linkingclasssing}) because in all examples below $V$ will be $\Gamma(X,E)$ for some algebraic vector bundle $E$ over $X$.

\begin{ntt}\label{mdni} For $d,n,i\in \Z$ set $\mathbf{m}(d,n,i)=(d-1)^{n+1}+(-1)^{i+1}(d-1)^{n+1-i}$. Note that if $n,i$ are fixed, $i$ is even and $i,n+1-i\geq 0$, then $\dfrac{\mathbf{m}(d,n,i)}{d-2}$ is an integral polynomial in $d$. We denote this polynomial by $\mathbf{m}'(d,n,i)$. We also note that $\mathbf{m}(d,n,i)\neq 0$ for $d\not\in\{0,1,2\}$ and $\mathbf{m}'(d,n,i)\neq 0$ for $d\not\in\{0,1\}$.
\end{ntt}

\subsubsection{Projective spaces}\label{explprojspace}

Set $X=\PP^n(\Co), E=\mathcal{O}(d), G=GL_{n+1}(\Co)$, and let $P$ be the stabiliser of a line $l\in X$. 
%
Using the notation of Example \ref{gln} we have $H^*(BG,\Q)\cong\Q[s_1,\ldots, s_{n+1}]$ where $s_i$ is the $i$-th elementary symmetric polynomial in $\varepsilon_1,\ldots,\varepsilon_{n+1}\in H^2(BT,\Q)$. Let us take $l$ to be the line spanned by $(1,0,\ldots, 0)$. The ring $H^*(BP,\Q)$ then also becomes a subring of $H^*(BT,\Q)$: we have $H^*(BP,\Q)\cong\Q[b,b_1,\ldots,b_n]$ where $b=\varepsilon_1$ and $b_i=\sigma_i(\varepsilon_2,\ldots, \varepsilon_{n+1})$. Note that under these identifications the map $\beta^*:H^*(BG,\Q)\to H^*(BP,\Q)$ becomes the inclusion, so we have $\beta^*(s_i)=b_i+b b_{i-1}$ where we set for convenience $b_0=1$ and $b_{n+1}=0$. The ideal $I_1(X,\Q)$ is generated by $a_i=b_i + bb_{i-1},i=1,\ldots, n+1$.

%
%
Set $E'=J(E)$. Using Example \ref{tautlinebund} we see that the weights of the representations of $P$ that correspond to $\Oh(d)$, respectively $\Omega_{\PP^n(\Co)}$ are $-d\e_1$, respectively $\e_1-\e_i,i\in\{2,\ldots, n+1\}$.
By exact sequence \ref{sesjetbundle} the weights of the representation of $P$ that corresponds to $E'$ are \begin{equation}\label{weightspn}
-d\e_1,(1-d)\e_1-\e_i,i\in\{2,\ldots,n+1\}.
\end{equation}
Multiplying these together and using Lemma \ref{inducedvectbund1} we get
\begin{multline}\label{explpn}
c_{n+1}(E'_{hG})=\prod_{i=1}^{n+1}((1-d)\e_1-\e_i)
=(-(d-1)b)^{n+1}+\sum_{i=1}^{n+1}(-1)^i(b_i + bb_{i-1})(-(d-1)b)^{n+1-i}\\
=(-1)^{n+1}\left(((d-1)b)^{n+1}+\sum_{i=1}^{n+1}a_i((d-1)b)^{n+1-i}\right)\in H^*_G(X,\Q)\cong H^*(BP,\Q)\subset H^*(BT,\Q).
\end{multline}

A straightforward check shows that
\begin{equation}\label{toppowerbgln}
b^{n+1}=\sum_{i=0}^n (-1)^i(b_{i+1}+b_i b)b^{n-i}=\sum_{i=1}^{n+1} (-1)^{i+1} a_i b^{n-i+1}.
\end{equation}
Substituting the right hand side in \ref{explpn} we get
$$(-1)^{n+1}c_{n+1}(E'_{hG})=\sum_{i=1}^{n+1} a_i b^{n-i+1}((d-1)^{n+1} (-1)^{i+1}+(d-1)^{n-i+1})=\sum_{i=1}^{n+1}(-1)^{i+1} a_i b^{n-i+1} \mathbf{m}(d,n,i).$$

Let us now calculate $\alpha^*(b)$. This is straightforward: the vector bundle over $X=G/P$ constructed using $b=\varepsilon_1$ is $\Oh(-1)$, so applying Lemma \ref{inducedvectbund1} and setting $c=c_1(\Oh(1))\in H^2(X,\Q)$ we get 
\begin{equation}\label{alphab}
\alpha^*(b)=-c_1(\mathcal{O}(1))=-c.
\end{equation}

Taking into account formulas \ref{alphab} and \ref{formulas1}, Example \ref{gln} and the fact that $a_i=\beta^*(s_i)$ we get
$$S(e_G(E'))=-\sum_{i=1}^{n+1}\mathbf{m}(d,n,i)\mathbf{x}_{2i-1} \times c^{n-i+1},$$
so using Corollary~\ref{maincorollary2} we obtain the following result.

\begin{prop}\label{propexplprojspace}
We have
\begin{equation}
O^*(\Lk([\PP^{n-i+1}(\Co)]))=S(e_G(E'))/[\mathbb{P}^{n-i+1}(\Co)]=-\mathbf{m}(d,n,i)\mathbf{x}_{2i-1}, i=1,\ldots,n+1
\end{equation}
where $\mathbf{x}_{2i-1}$ are the free multiplicative generators of $H^*(GL_{n+1}(\Co),\Z)$ from Example \ref{gln} and the coefficients $\mathbf{m}(d,n,i)$ are defined in Notation \ref{mdni}. \qed
\end{prop}

\subsubsection{Odd-dimensional quadrics.}\label{oddquad} Let $G,T,W$, and $\varepsilon_1,\ldots,\varepsilon_n\in \mathfrak{X}(T)$ be as in Example \ref{so2n+1}. So $G\cong SO_{2n+1}(\Co)$ preserves a non-degenerate quadratic form on $\Co^{2n+1}$, and we let $X\subset\PP^{2n}(\Co)$ be the zero locus of this form. We take $E$ to be the restriction of $\mathcal{O}(d)$ to $X$ and $P\subset G$ equal the stabiliser of $(1:0:\cdots :0)\in X$.
The Weyl group $W_P\subset W$ of $P$ is the stabiliser of $\varepsilon_1$. We identify $H^*(B G,\Q)$ with $\Q[s_1,\ldots, s_n]$, where $s_i$ is the $i$-th elementary symmetric polynomial in $\varepsilon^2_1,\ldots,\varepsilon_n^2$, and $H^*(BP,\Q)$ with $\Q[b, b_1,\ldots, b_{n-1}]$ where $b=\varepsilon_1$ and $b_i, i=1,\ldots, n-1$ is $\sigma_i(\varepsilon_2^2,\ldots,\varepsilon_n^2)$. For $i=1,\ldots, n$ we have $\beta^*(s_i)= b_i+ b^2 b_{i-1}$ where $b_0=1,b_n=0$. The ideal $I_1(X,\Q)$ is generated by $a_i=b_i+ b^2 b_{i-1},i=1,\ldots, n$.

We now proceed as in the case of the projective space. The bundle $E'=J(E)$ is $G$-equivariant, and we wish to find the weights of the representation of $P$ that induces $E'$. We have an exact sequence
\begin{equation}\label{exactseqjets}
0\to \Oh(d-2)|_X\to J(\Oh(d))|_X\to E'=J(\Oh(d)|_X)\to 0.
\end{equation}
(Note that this sequence exists for even-dimensional quadrics as well.) Restricting the weights we found above for $J(\Oh(d))$ (see formula \ref{weightspn}) to $\mathfrak{X}(T)$ and excluding the weight $(2-d)\e_1$, which corresponds to $\Oh(d-2)|_X$, we get
$$-d\e_1, (1-d)\e_1, (1-d)\e_1\pm\e_i,i\in\{2,\ldots, n\}.$$
Multiplying the weights we get by Lemma \ref{inducedvectbund1} (for $d\neq 2$)
\begin{multline*}
e(E'_{hG})=(-\varepsilon_1-(d-1)\varepsilon_1)(-(d-1)\varepsilon_1)\prod_{i=2}^n ((d-1)^2\varepsilon_1^2-\varepsilon_i^2)=\frac{d-1}{d-2}\prod_{i=1}^n ((d-1)^2\varepsilon_1^2-\varepsilon_i^2)\\ =\frac{d-1}{d-2}\left((d-1)^{2n} \varepsilon_1^{2n}+\sum_{i=1}^n(-1)^i (d-1)^{2n-2i} \varepsilon_1^{2n-2i}s_i\right)\\ =\frac{d-1}{d-2}\left((d-1)^{2n}b^{2n}+\sum_{i=1}^n(-1)^i (d-1)^{2n-2i} b^{2n-2i}a_i\right).
\end{multline*}

We have (cf.\ \ref{toppowerbgln})
\begin{equation}\label{toppowerbso2n+1}
b^{2n}=\sum_{i=1}^{n} (-1)^{i+1}(b_{i}+b_{i-1} b^2)b^{2(n-i)}=\sum_{i=1}^{n} (-1)^{i+1} a_i b^{2(n-i)},
\end{equation}
so
\begin{multline*}
e(E'_{hG})=\frac{d-1}{d-2}\sum_{i=1}^n (-1)^{i+1}((d-1)^{2n}-(d-1)^{2n-2i})a_i b^{2n-2i}=\sum_{i=1}^n (-1)^{i+1}\frac{\mathbf{m}(d,2n,2i)}{d-2} a_i b^{2n-2i}=\\
\sum_{i=1}^n (-1)^{i+1}\mathbf{m}'(d,2n,2i) a_i b^{2n-2i}.
\end{multline*}
Note that the formula $e(E'_{hG})$ we have just obtained is valid for $d=2$ as well.

As in Section \ref{explprojspace} we find that $\alpha^*(b)=-c$ where $c$ is $c_1(\mathcal{O}(1))$ restricted to $X$. Recall that $H^i(X,\Z)$ and $H_i(X,\Z)$ are $\cong \Z$ if $i\in\{0,2,\ldots, 4n-2\}$ and are 0 otherwise. The class $c^i$ is a generator of $H^{2i}(X,\Z)$ if $i \leq n-1$ and twice a generator if $n\leq i\leq 2n-1$: this follows from the integral Poincar\'e duality and the fact that $c^{2n-1}([X])=2$. Applying formula \ref{formulas1}, Corollary~\ref{maincorollary2} and Example \ref{so2n+1} we get the following proposition.

\begin{prop}\label{propexploddquadric}
There exist additive generators $Z_j$ of $H_{2j}(X,\Z),j=0,\ldots, 2n-1$ such that modulo torsion
$$O^*(\Lk(Z_j))=S(e_G(E'))/Z_j=\left\{
\begin{array}{ll}
0 & \mbox{if $j$ is odd,}\\
2\mathbf{m}'(d,2n,2i)\mathbf{z}_{4i-1} & \mbox{if $j=2n-2i$ and $0\leq j\leq n-1$,}\\
4\mathbf{m}'(d,2n,2i)\mathbf{z}_{4i-1} & \mbox{if $j=2n-2i$ and $n\leq j\leq 2n-2$}
\end{array}
\right.
$$
where $\mathbf{z}_{4i-1}$ are the free multiplicative generators of $\fr{*}(SO_{2n+1}(\Co),\Z)$ introduced in Example \ref{so2n+1}, and $\mathbf{m}'(d,2n,2i)$ are the coefficients defined in Notation \ref{mdni}.\qed
\end{prop}

\subsubsection{Even-dimensional quadrics.}\label{evenquad} Let $G,T,W$, and $\varepsilon_1,\ldots,\varepsilon_n\in \mathfrak{X}(T)$ be as in Example \ref{so2n}. We take $X\subset\mathbb{P}^{2n-1}(\Co)$ to be the zero locus of the quadratic form on $\Co^{2n}$ preserved by $G$. The line bundle $E$ and the subgroup $P\subset G$ are defined as in the case of odd-dimensional quadrics above. The Weyl group $W_P$ of $P$ is again the stabiliser of $\varepsilon_1$, so setting $s_i=\sigma_i(\varepsilon_1^2,\ldots,\varepsilon_n^2), s=\prod_{i=1}^n \varepsilon_i, b=\varepsilon_1, b_i=\sigma_i(\varepsilon_2^2,\ldots,\varepsilon_n^2),s'=\prod_{i=2}^n \varepsilon_i$ we have
$$H^*(BG,\Q)\cong\Q[s_1,\ldots, s_n,s], H^*(BP,\Q)\cong\Q[b, b_1,\ldots, b_n,s']$$ with the map $\beta^*:H^*(BG,\Q)\to H^*(BP,\Q)$ being the inclusion. The ideal $I_1(X,\Q)$ is generated by $a_i=b_i+ b^2 b_{i-1},i=1,\ldots, n-1$ and $a=bs'$ (we again set $b_0=1$).

As in Section \ref{oddquad} we set $E'=J(E)$ and find that  
the weights of the corresponding representation of $P$ are
$$-d\e_1,(1-d)\e_1\pm \e_i,i\in\{2,\ldots,n\},$$ so 
we get using Lemma \ref{inducedvectbund1} and temporarily assuming $d\neq 2$
\begin{multline*}
e(E'_{hG})=(-\varepsilon_1-(d-1)\varepsilon_1)\prod_{i=2}^n ((d-1)^2\varepsilon_1^2-\varepsilon_i^2)=\frac{-1}{\varepsilon_1(d-2)}\prod_{i=1}^n ((d-1)^2\varepsilon_1^2-\varepsilon_i^2)\\ =\frac{-1}{\varepsilon_1(d-2)}\left((d-1)^{2n} \varepsilon_1^{2n}+\sum_{i=1}^n(-1)^i (d-1)^{2n-2i} \varepsilon_1^{2n-2i}s_i\right)\\ 
=\frac{-1}{d-2}\left((d-1)^{2n}b^{2n-1}+\sum_{i=1}^{n-1}(-1)^i (d-1)^{2n-2i} b^{2n-2i-1}a_i+ (-1)^n as'\right).
\end{multline*}

We have (cf.\ \ref{toppowerbgln} and \ref{toppowerbso2n+1})
\begin{equation*}
b^{2n-1}=\sum_{i=1}^{n-1} (-1)^{i+1}(b_{i}+b_{i-1} b^2)b^{2(n-i)-1}+(-1)^{n+1}b_{n-1}b=\sum_{i=1}^{n-1} (-1)^{i+1} a_i b^{2(n-i)-1}+(-1)^{n+1} as',
\end{equation*}
which implies
\begin{multline*}
e(E'_{hG})=\frac{-1}{d-2}\left(\sum_{i=1}^{n-1} (-1)^{i+1}((d-1)^{2n}-(d-1)^{2n-2i})a_i b^{2n-2i-1}+(-1)^{n+1}((d-1)^{2n}-1) as'\right)\\ =\frac{1}{d-2}\left(\sum_{i=1}^{n-1} (-1)^{i}\mathbf{m}(d,2n-1,2i) a_i b^{2n-2i-1}+(-1)^{n}\mathbf{m}(d,2n-1,2n)as'\right) \\ =\sum_{i=1}^{n-1} (-1)^{i}\mathbf{m}'(d,2n-1,2i) a_i b^{2n-2i-1}+(-1)^{n}\mathbf{m}'(d,2n-1,2n)as'.
\end{multline*}
The final expression for $e(E'_{hG})$ is valid for $d=2$ as well.

We have
$$H^i(X,\Z)\cong H_i(X,\Z)\cong\left\{\begin{array}{ll} \mathbb{Z}& \mbox{if $i\in\{0,2,\ldots, 4n-4\}$ but $i\neq 2n-2$,}\\ \Z^2 &\mbox{if $i=2n-2$,}\\  0 & \mbox{otherwise.}\end{array}\right.
$$
Let $c$ be $c_1(\mathcal{O}(1))$ restricted to $X$. As in Section \ref{oddquad} we conclude that $\alpha^*(b)=-c$ and the class $c^i$ is a generator of $H^{2i}(X,\Z)$ if $0\leq i<n-1$ and twice a generator if $n-1<i\leq 2n-2$.

\begin{lmm}
We identify $H^{4n-4}(X,\Z)\cong \Z$ using the fundamental class. Set $\alpha_1=c^{n-1}$ and $\alpha_2= \alpha^*(s')$. We have then $\alpha_1^2=2, \alpha_1\alpha_2=0$ and $\alpha_2^2=(-1)^{n-1}\cdot 2$. Moreover, there is a basis $\{\Lambda_1,\Lambda_2\}$ of $H^{2n-2}(X,\Z)$ such that $\alpha_1=\Lambda_1+\Lambda_2$ and $\alpha_2=\Lambda_1-\Lambda_2$.
\end{lmm}

\begin{proof} We have $\alpha_1^2=c^{2n-2}=2$, as $c=c_1(\mathcal{O}(1))$, and $\alpha_1\alpha_2=(-1)^{n-1} \alpha^*(b s')=(-1)^{n-1}\alpha^*(a)=0$. Moreover, $\alpha_2^2=\alpha^*(b_{n-1})$. Using $a_i=b_i+b^2 b_{i-1}$ and $\alpha^*(a_i)=0$ we deduce by induction that $\alpha(b_i)=(-1)^i \alpha^*(b^{2i})=(-1)^i c^{2i}$, which implies $\alpha_2^2=(-1)^{n-1}\cdot 2$.

%

The intersection form of $X$ represents 2. Using the classification of integral unimodular quadratic forms of small rank (see e.g.\ \cite{mh}*{Theorem 2.2, Chapter 2}) we conclude that there is a basis of $H^{2n-2}(X,\Z)$ in which the intersection form has matrix $\begin{pmatrix} 1& 0\\0& 1\end{pmatrix}$ if $n$ is odd and $\begin{pmatrix} 0& 1\\1& 0\end{pmatrix}$ if $n$ is even. Multiplying some or all elements of this basis by $-1$ if necessary we get the required basis $\{\Lambda_1,\Lambda_2\}$ of $H^{2n-2}(X,\Z)$.
\end{proof}

We now state an analogue of Propositions \ref{propexplprojspace} and \ref{propexploddquadric}, which again follows by applying formula \ref{formulas1}, Corollary~\ref{maincorollary2} and Example \ref{so2n}.
\begin{prop}
There exist generators $Z_j$ of $H_{2j}(X,\Z),\allowbreak j=0, \ldots, 2n-2, j\neq n-1$ and generators $W_1, W_2$ of $H_{2n-2}(X,\Z)$ such that modulo torsion
$$O^*(\Lk(Z_j))=S(e_G(E'))/Z_j=\left\{
\begin{array}{ll}
0 & \mbox{if $j$ is even,}\\
{2} \mathbf{m}'(d,2n-1,2i)\mathbf{z}_{4i-1} & \mbox{if $j=2n-2i-1$ and $1\leq j< n-1$,}\\
{4} \mathbf{m}'(d,2n-1,2i)\mathbf{z}_{4i-1} & \mbox{if $j=2n-2i-1$ and $n-1< j\leq 2n-3$;}
\end{array}
\right.
$$
$$O^*(\Lk(W_1))=S(e_G(E'))/W_1=\left\{
\begin{array}{ll}
\mathbf{m}'(d,2n-1,2n)\mathbf{z}+{2}\mathbf{m}'(d,2n-1,2i)\mathbf{z}_{4i-1} & \mbox{if $n-1=2n-2i-1$,}\\
\mathbf{m}'(d,2n-1,2n)\mathbf{z} & \mbox{if $n$ is odd;}\\
\end{array}
\right.
$$
$$O^*(\Lk(W_2))=S(e_G(E'))/W_2=\left\{
\begin{array}{ll}
-\mathbf{m}'(d,2n-1,2n)\mathbf{z}+{2}\mathbf{m}'(d,2n-1,2i)\mathbf{z}_{4i-1} & \mbox{if $n-1=2n-2i-1$,}\\
-\mathbf{m}'(d,2n-1,2n)\mathbf{z} & \mbox{if $n$ is odd.}\\
\end{array}
\right.
$$
Here $\mathbf{z}_{4i-1}$ and $\mathbf{z}$ are the free multiplicative generators of $\fr{*}(SO_{2n}(\Co),\Z)$ introduced in Example \ref{so2n}, and $\mathbf{m}'(d,2n-1,2i)$ are the coefficients defined in Notation \ref{mdni}. \qed
\end{prop}

\subsubsection{Degree of the discriminant.} We will need the following proposition later to apply Corollaries \ref{descentc*2ndrev} and \ref{maincorappl}. A straightforward check shows that in all cases considered in Section \ref{projquad} above a general singular section has one singular point, so by Proposition \ref{degreediscr} the proposition also gives us the degrees of the discriminant varieties.

\begin{prop}\label{degreediscrhypprojquad}
If $X=\PP^n(\Co)$, then $\big\langle c_n(J(\mathcal{O}(d)),[X]\big\rangle=(n+1)(d-1)^n$. Similarly, if $Q$ a non-degenerate quadric in $\PP^n(\Co)$, then $\big\langle c_{n-1}(J(\mathcal{O}(d)|_Q),[Q]\big\rangle=2\sum_{i=0}^{n-1} (i+1)(d-1)^i$.
\end{prop}

\begin{proof}
We use $c(-)$ to denote the total Chern class of a vector bundle. In the case of the projective space~$\PP^n(\Co)$ we have then $c\big(J(\Oh(d))\big)= c\big((\Omega_{\PP^n(\Co)}\oplus\Oh)\otimes\Oh(d)\big)=c\big(\Oh(d-1)^{\oplus (n+1)}\big)$ by exact sequence~\ref{sesjetbundle}. For~$Q$ using exact sequence \ref{exactseqjets} we have
$$c\big(J(\Oh(d)|_Q)\big)=\frac{c\big(J(\Oh(d))|_Q\big)}{c\big(\Oh(d-2)|_Q\big)}=\frac{c\big(\Oh(d-1)^{\oplus(n+1)}|_Q\big)}{c\big(\Oh(d-2)|_Q\big)},$$
which implies the formula for $c_{n-1}(J(\mathcal{O}(d)|_Q)$. 
\end{proof}

\subsection{Complete flag varieties}
\label{completeflagvarieties}

Let $G$ be a connected complex reductive Lie group, $B\subset G$ a Borel subgroup, $T\subset B$ a maximal torus of $G$, and set $X=G/B$. 
Let $R\subset \mathfrak{X}(T)$ be the root system of $G$. For a character $\chi\in \mathfrak{X}(B)=\mathfrak{X}(T)$ we denote the line bundle $G\times_\chi \Co$ over $X$ by $\Oh(\chi)$. By the Borel-Weil-Bott theorem (see e.g.\ \cite{S95}, and also \cite{S07}*{\S7.4} for a detailed account), $\Gamma(X,\Oh(\chi))\neq 0$ if and only if $\chi$ is dominant with respect to the system $\Pi$ of simple roots determined by $B\supset T$.

Recall that in Section~\ref{the map alpha} we introduced a $\Z$-basis $\{e_w\}_{w\in W}$ for the integral homology groups $H_*(G/B,\Z)$. Here $W$ denotes the Weyl group of $G$; recall that $e_w\in H_{2\ell(w)}(G/B,\Z)$. Assume $\chi$ dominant and set $E'=J(\Oh(\chi))$. We will work out below the classes $S(e_G(E'))/e_w\in \fr{2d+1-2\ell(w)}(G,\Z), w\in W$ explicitly in several examples. 

To begin with, let us calculate $E'$ and $e_G(E')$. Let $H\subset G$ be a subgroup that contains $T$, and let $E_1=G\times_{R_1} V$ and $E_2=G\times_{R_2} V$ be two equivariant vector bundles over $G/H$ where $R_1,R_2$ are representations of $H$. We will write $E_1\cong_T E_2$ if $R_1|_T\cong R_2|_T$.

Let $R_+\subset R$ denote the set of positive roots. Using sequence~\ref{sesjetbundle} we deduce that
\begin{equation}\label{decjetbund}
\begin{aligned}
\Omega_{X}\cong G\times_B(\mathfrak{g}/\mathfrak{b})^*\cong_T \bigoplus_{\alpha\in R^{+}} \Oh(-\alpha), \\
E'=J(\Oh(\chi))\cong_T \Oh(\chi)\oplus \bigoplus_{\alpha\in R_{+}} \Oh(\chi-\alpha),
\end{aligned}
\end{equation}
which by Lemma \ref{inducedvectbund1} implies (setting $d=\dim X$)
\begin{equation*}
	e_G(E')=e_G\big(J(\Oh(\chi))\big)=\chi\prod_{\alpha\in R_+} (\chi-\alpha) \in \mathrm{Sym}^{d+1}(\mathfrak{X}(T))\subset H^{2d+2}(BT,\Q)\cong H_G^{2d+2}(G/B,\Q).
\end{equation*}

\begin{rmk} If $P\supset B$ is an arbitrary parabolic subgroup of $G$ and $X_P=G/P$, the cotangent bundle $\Omega_{X_P}$ has a similar description, namely this bundle is $\cong_T \bigoplus_{\alpha\in R^{+}\setminus\triangle_{\Theta}}\Oh(-\alpha)$ where $\triangle_{\Theta}$ is the subgroup of $\mathfrak{X}(T)$ generated by a subset $\Theta\subset\Pi$, see e.g.\ \cite{Steinberg67}. So if $\chi\in\mathfrak{X}(P)$, $E'=J(G\times_\chi \Co), d_P = \dim X_P$ and $p:G/B\to G/P$ is the projection, then we have
\begin{equation*}
	e_G(p^*(E'))=\chi\prod_{\alpha\in R_+\setminus\triangle_{\Theta}} (\chi-\alpha) \in \mathrm{Sym}^{d_P+1}(\mathfrak{X}(T))\subset H^{2d_P+2}(BT,\Q)\cong H_G^{2d_P+2}(G/B,\Q).
\end{equation*}
\end{rmk}

\begin{rmk}\label{polynomial}
Let $P$ and $X_P$ be as in the previous remark. For a character $\chi\in\mathfrak{X}(P)$ set $\Oh(\chi)=G\times_\chi \Co$. The map $H_*(X_P,\Z)\to \fr{*}(G,\Z)$ given by
$$y\mapsto S\Big(e_G\big(J(\Oh(\chi))\big)\Big)\Bigm/ y$$
depends polynomially on $\chi$.
\end{rmk}

The next step after calculating $e_G(E')$ is to find a system of generators for the ideal $I_1(X,\Q)$ and express $e_G(E')$ in terms of these generators, cf.\ Section \ref{genidealbargamma}. Below we only do this for $G=SL_3(\Co), Sp_4(\Co)$ and $G_2$. Note that the case of $G=$ any other connected complex reductive group of rank $\leq 2$ is either not interesting or is covered by the results of Section \ref{projquad}.

Set $G$ to be any of the groups $SL_3(\Co), Sp_4(\Co),G_2$. We let $\alpha_1,\alpha_2$ be the elements of $\Pi$ and denote the corresponding dominant weights by $\omega_1$ and $\omega_2$. We will write $\chi\in\mathfrak{X}(T)$ as $\chi=m\omega_1+n\omega_2$, $m,n\in\Z$, and express the classes $S(e_G(E'))/w,w\in W$ in terms of $m,n$. Note that $\chi$ is dominant (i.e.\ $\Gamma(X,\Oh(\chi))\neq 0$) if and only if $m,n\geq 0$. 

There are generators $\tilde s_1,\tilde s_2$ of the ring $H^*(BT,\Q)^{W}$ such that $\tilde s_1 \in H^4(BT,\Q)^{W}$ and $\tilde s_2 \in H^{2\ell(w_0)}(BT,\Q)^{W}$ where $w_0\in W$ is the longest element. Moreover, one can choose $\tilde s_1,\tilde s_2$ so that $\bar\gamma(\tilde s_1), \bar\gamma(\tilde s_2)$ are free multiplicative generators of $\fr{*}(G,\Z)$. This was done in Examples~\ref{sln} (for $n=3$),~\ref{sp2n} (for $n=2$), and~\ref{g2}. Let us fix such a pair of generators. There is a decomposition $e_G(J(\Oh(\chi)))=p_1\tilde s_1 +p_2\tilde s_2$ where $p_1\in H^{2\ell(w_0)-2}(BT,\Q)$ and $p_2\in H^2(BT,\Q)$. 

We now describe how to compute $\alpha^*(p_i)$. Let $\sigma_i \in W$ be the reflection of $\mathfrak{X}(T)\otimes\R$ in the hyperplane orthogonal to $\alpha_i, i=1,2$. Following~\cite{BGG}, we define operators $A_w\colon \mathrm{Sym}^*(\mathfrak{X}(T))\otimes\Q \to \mathrm{Sym}^*(\mathfrak{X}(T))\otimes\Q, w\in W$ by setting $A_{w_1w_2}= A_{w_1}A_{w_2}$ if $\ell(w_1 w_2)=\ell(w_1)+\ell(w_2)$ and, for $w=\sigma_1,\sigma_2$ and $f\in \mathrm{Sym}^*(\mathfrak{X}(T))\otimes\Q$,
\begin{align*}
A_{\sigma_1}f(\omega_1,\omega_2)&=\frac{f(\omega_1,\omega_2)-f(\sigma_1(\omega_1),\omega_2)}{\omega_1-\sigma_1(\omega_1)},\\
A_{\sigma_2}f(\omega_1,\omega_2)&=\frac{f(\omega_1,\omega_2)-f(\omega_1,\sigma_2(\omega_2))}{\omega_2-\sigma_2(\omega_2)}.
\end{align*}
By~\cite{BGG}*{Theorem 4.1} we have then $\langle \alpha^*(f), e_w\rangle = (A_w f)(0)$. Note that each of the operators $A_{\sigma_1},A_{\sigma_2}$ has degree $-1$. 
Applying Corollary \ref{maincorollary2} and formula \ref{explicits1gb}, and setting $E'=J(\Oh(\chi))$ we get the following identity in $\fr{*}(G,\Z)$:
\begin{equation}\label{explevallinkclasses}
O^*(\Lk(e_w)) = S(e_G(E'))/e_w=
\left\{\begin{array}{lll}
\langle \alpha^*(p_2), e_w \rangle \bar\gamma(\tilde s_1) = (A_w p_2)(0)\cdot\bar\gamma(\tilde s_2) &\text{for $w=\sigma_i, i=1,2$;}\\
\langle \alpha^*(p_1), e_w \rangle \bar\gamma(\tilde s_2) = (A_wp_1)(0)\cdot\bar\gamma(\tilde s_1) &\text{for $w=\sigma_iw_0, i=1,2$;}\\
0 &\text{otherwise.}
\end{array}\right. 
\end{equation}

Below for each of the groups $SL_3(\Co), Sp_4(\Co),G_2$ we give a table that contains all information necessary to compute $A_w p_i,i=1,2$, and after that we give formulas for the resulting classes $O^*(\Lk(e_w))$, $w=\sigma_i$, and $w=\sigma_iw_0$. 

We also calculate $c_d(E')([X])$. By Proposition \ref{degreediscr}, if this number is non-zero, then a general singular section of $\Oh(\chi)$ has $k$ singular points where $k$ is an integer $>0$, and if $k=1$, then $c_d(E')([X])$ is the degree of the discriminant variety $\Sing(\Oh(\chi))\subset \Gamma(X,\Oh(\chi))$. We have $[X]=e_{w_0}$. 
The equivariant Chern class $c_d(E'_{hG})$ can be read off \ref{decjetbund}. We then use the integration formula again to evaluate the result on $[X]$.

All calculations (i.e.\ finding $p_1,p_2$ and evaluating $A_w$) for complete flag varieties below were done using Singular~\cite{Singular}.

\subsubsection{$SL_3(\Co)$. }\label{sl3short}
\label{SL3}
Let $G=SL_3(\Co)$. We use the notation of Example \ref{sln} for $n=3$. The simple roots in table~\ref{tab:a3} correspond to the Borel subgroup of upper triangular matrices in $G$. Recall that we have $H^*(BT,\Q)\cong\Q[\epsilon_1, \epsilon_2, \epsilon_3] / (\epsilon_1+\epsilon_2+\epsilon_3)$.
Recall also that $\mathbf{x}_{2i-1} = \bar\gamma(s_i) \in H^{2i-1}(SL_3,\Z), i=1,2$ are free multiplicative generators of $H^*(SL_3,\Z)$. 

\begin{center}
\begin{table}[h]
\caption{Root system $A_3$.}
\begin{tabular}{|m{5.1cm}|m{10.5cm}|}
\hline 
Simple roots & \begin{tabular}{l} $\alpha_1=\epsilon_1-\epsilon_2, \alpha_2=\epsilon_1 + 2\epsilon_2$\end{tabular}\\
\hline
Fundamental weights & \begin{tabular}{l} $\omega_1=\epsilon_1,\omega_2=\epsilon_1+\epsilon_2$\end{tabular}\\
\hline
Positive roots in terms of $\omega_1, \omega_2$& \begin{tabular}{l}$\alpha_1=2\omega_1-\omega_2,\alpha_2=-\omega_1+2\omega_2,\alpha_1+\alpha_2=\omega_1+\omega_2$\end{tabular}\\
\hline
Action of $W$ on the fundamental weights & \begin{tabular}{l} $\sigma_1(\omega_2)=\omega_2$, $\sigma_2(\omega_1)=\omega_1$, \\
$\sigma_1(\omega_1)=\omega_2-\omega_1, \sigma_2(\omega_2)=\omega_1-\omega_2$\end{tabular}\\
\hline 
Generators $\tilde s_1,\tilde s_2$ of $H^*(BT,\Q)^{W}$ in terms of
$\omega_1, \omega_2$ & \begin{tabular}{l}
$\tilde s_1=s_1=\epsilon_1\epsilon_2+\epsilon_2\epsilon_3+\epsilon_3\epsilon_1=-\omega_1^2+\omega_1\omega_2-\omega_2^2$,\\
$\tilde s_2=s_2=\epsilon_1\epsilon_2\epsilon_3=\omega_1\omega_2(\omega_1-\omega_2)$
\end{tabular}\\
\hline
The longest element $w_0$ of $W$ & \begin{tabular}{l} $w_0 = \sigma_1\sigma_2\sigma_1$, $\ell(w_0)=3$\end{tabular}\\
\hline
\end{tabular}
\label{tab:a3}
\end{table}
\end{center}

Applying formula \ref{explevallinkclasses} we get
\begin{align*}
O^*(\Lk(e_{\sigma_1}))  = &\mathbf{x}_5 \big(m^{4} - 6 \, m^{2} n^{2} - 4 \, m n^{3} - 2 \, m^{3} + 6 \, m^{2} n + 12 \, m n^{2} + 2 \, n^{3} - 12 \, m n - 6 \, n^{2} + 3 \, m + 6 \, n\big) , \\
O^*(\Lk(e_{\sigma_2}))  = &\mathbf{x}_5 \big(4 \, m^{3} n + 6 \, m^{2} n^{2} - n^{4} - 2 \, m^{3} - 12 \, m^{2} n - 6 \, m n^{2} + 2 \, n^{3} + 6 \, m^{2} + 12 \, m n - 6 \, m - 3 \, n\big) , \\
O^*(\Lk(e_{\sigma_1 w_0}))  = &  \mathbf{x}_3\big(-m^{4} - 4 \, m^{3} n - 6 \, m^{2} n^{2} - 4 \, m n^{3} + 4 \, m^{3} + 12 \, m^{2} n + 12 \, m n^{2} + 2 \, n^{3} -5 \, m^{2} - 10 \, m n - \\
&6 \, n^{2} + 2 \, m + 4 \, n \big),\\
O^*(\Lk(e_{\sigma_2 w_0}))  = &\mathbf{x}_3 \big(-4 \, m^{3} n - 6 \, m^{2} n^{2} - 4 \, m n^{3} - n^{4} + 2 \, m^{3} + 12 \, m^{2} n + 12 \, m n^{2} + 4 \, n^{3} - 6 \, m^{2} - 10 \, m n - \\
&5 \, n^{2} + 4 \, m + 2 \, n\big).
\end{align*}
We let $\mathbf{m}(\chi,SL_3(\Co),2)$, respectively $\mathbf{m}(\chi,SL_3(\Co),3)$ be the least common multiple of the coefficients of $\mathbf{x}_3$, respectively $\mathbf{x}_5$ in these formulas.

Moreover,
\begin{equation*}
c_d(E')([X])=A_{w_0}(c_3(J(\Oh(\chi))_G))=12 \, m^{2} n + 12 \, m n^{2} - 6 \, m^{2} - 24 \, m n - 6 \, n^{2} + 12 \, m + 12 \, n - 6.
\end{equation*}

\subsubsection{$Sp_4(\Co)$.}\label{sp4short}
We use the notation introduced in Example \ref{sp2n} for $n=2$. The set of simple roots in table~\ref{tab:c2} corresponds to the Borel subgroup of upper triangular matrices in $G$. Recall that we have $H^*(BT,\Q)\cong\Q[\e_1,\e_2]$.
Recall also that $\mathbf{y}_{2i-1} = \bar\gamma(s_i) \in H^{2i-1}(Sp_4(\Co),\Z), i=1,2$ are free multiplicative generators of $H^*(Sp_4(\Co),\Z)$.

\begin{center}
\begin{table}[h]
\caption{Root system $C_2$.}
\begin{tabular}{|m{5.1cm}|m{10.5cm}|}
\hline 
Simple roots & \begin{tabular}{l} $\alpha_1=\e_1-\e_2, \alpha_2=2\e_2$\end{tabular}\\
\hline
Fundamental weights & \begin{tabular}{l} $\omega_1=\e_1,\omega_2=\e_1+\e_2$\end{tabular}\\
\hline
Positive roots in terms of $\omega_1, \omega_2$& \begin{tabular}{l}$\alpha_1=2\omega_1-\omega_2,\alpha_2=-2\omega_1+2\omega_2,\alpha_1+\alpha_2=\omega_2,2\alpha_1+2\alpha_2=2\omega_1$\end{tabular}\\
\hline
Action of $W$ on the fundamental weights & \begin{tabular}{l} $\sigma_1(\omega_2)=\omega_2$, $\sigma_2(\omega_1)=\omega_1$, \\
$\sigma_1(\omega_1)=\omega_2-\omega_1, \sigma_2(\omega_2)=2\omega_1-\omega_2$\end{tabular}\\
\hline
Generators $\tilde s_1, \tilde s_2$ of $H^*(BT,\Q)^{W}$ in terms of
$\omega_1, \omega_2$ & \begin{tabular}{l}
$\tilde s_1=s_1=\e_1^2+\e_2^2=2\omega_1^2-2\omega_1\omega_2+\omega_2^2$,\\
$\tilde s_2=s_2=(\e_1\e_2)^2=\omega_1^2(\omega_1-\omega_2)^2$
\end{tabular}\\
\hline
The longest element $w_0$ of $W$ & \begin{tabular}{l} $w_0 = (\sigma_1\sigma_2)^2$, $\ell(w_0)=4$\end{tabular}\\
\hline
\end{tabular}
\label{tab:c2}
\end{table}
\end{center}

Applying formula \ref{explevallinkclasses} we get
\begin{align*}
O^*(\Lk(e_{\sigma_1}))  = &\mathbf{y}_7\big(-m^{5} + 20 \, m^{3} n^{2} + 40 \, m^{2} n^{3} + 20 \, m n^{4} + 2 \, m^{4} - 16 \, m^{3} n - 72 \, m^{2} n^{2} - 64 \, m n^{3} - 8 \, n^{4} + \\
&2 \, m^{3} + 48 \, m^{2} n + 84 \, m n^{2} + 24 \, n^{3} - 8 \, m^{2} - 48 \, m n - 32 \, n^{2} + 8 \, m + 16 \, n\big), \\
O^*(\Lk(e_{\sigma_2}))  = &\mathbf{y}_7\big( -5 \, m^{4} n - 20 \, m^{3} n^{2} - 20 \, m^{2} n^{3} + 4 \, n^{5} + 2 \, m^{4} + 24 \, m^{3} n + 48 \, m^{2} n^{2} + 16 \, m n^{3} - 8 \, n^{4} - \\
&8 \, m^{3} - 42 \, m^{2} n - 36 \, m n^{2} + 4 \, n^{3} + 12 \, m^{2} + 32 \, m n + 8 \, n^{2} - 8 \, m - 8 \, n\big), \\
O^*(\Lk(e_{\sigma_1 w_0})) = &\mathbf{y}_3\big(  5 \, m^{4} n + 20 \, m^{3} n^{2} + 30 \, m^{2} n^{3} + 20 \, m n^{4} + 4 \, n^{5} - 2 \, m^{4} - 24 \, m^{3} n - 60 \, m^{2} n^{2} - 56 \, m n^{3} - \\
&16 \, n^{4} + 8 \, m^{3} + 39 \, m^{2} n + 54 \, m n^{2} + 24 \, n^{3} - 10 \, m^{2} - 24 \, m n - 16 \, n^{2} + 4 \, m + 4 \, n\big), \\
O^*(\Lk(e_{\sigma_2 w_0})) = &\mathbf{y}_3\big( m^{5} + 10 \, m^{4} n + 30 \, m^{3} n^{2} + 40 \, m^{2} n^{3} + 20 \, m n^{4} - 6 \, m^{4} - 40 \, m^{3} n - 84 \, m^{2} n^{2} - 64 \, m n^{3} - \\
&8 \, n^{4} + 13 \, m^{3} + 54 \, m^{2} n + 72 \, m n^{2} + 24 \, n^{3} - 12 \, m^{2} - 32 \, m n - 24 \, n^{2} + 4 \, m + 8 \, n\big).
\end{align*}
We let $\mathbf{m}(\chi,Sp_4(\Co),2)$, respectively $\mathbf{m}(\chi,Sp_4(\Co),4)$ be the least common multiple of the coefficients of $\mathbf{y}_3$, respectively $\mathbf{y}_7$ in these formulas.

Moreover,
\begin{align*}
c_d(E')([X])=&A_{w_0}\big(c_4(J(\Oh(\chi))_G)\big)=20 \, m^{3} n + 60 \, m^{2} n^{2} + 40 \, m n^{3} - 8 \, m^{3} - 72 \, m^{2} n - 96 \, m n^{2} - 16 \, n^{3} +  \\
&24 \, m^{2} + 84 \, m n +36 \, n^{2} - 24 \, m - 32 \, n + 8.
\end{align*}

\subsubsection{$G_2$.}\label{G2short} 
We use the notation of Example \ref{g2}, with two exceptions: we let $T$ be the maximal torus of $G$ that was denoted $T_G$ in Example \ref{g2}, and we use $W$ to denote the Weyl group of $G$. Recall that we have $H^*(BT,\Q)\cong\Q[\e_1,\e_2,\e_3]/(\e_1+\e_2+\e_3)$. 
Recall also that $\mathbf{w}_{3} = \frac{1}{2}\bar\gamma(s_{1,G_2}) \in \fr{3}(G_2,\Z)$ and $\mathbf{w}_{11}=\frac{1}{2}\bar\gamma(s_{3,G_2}) \in \fr{11}(G_2,\Z)$ are free multiplicative generators of $\fr{*}(G_2,\Z)$, where $s_{1,G_2}$ and $s_{3,G_2}$ are the polynomial generators of $H^*(BT,\Q)^{W}$ from Example~\ref{g2}.

\begin{center}
\begin{table}[h]
\caption{Root system $G_2$.}
\begin{tabular}{|m{5.1cm}|m{10.7cm}|}
\hline 
Simple roots & \begin{tabular}{l} $\alpha_1=\e_2, \alpha_2=\e_1-\e_2$\end{tabular}\\
\hline
Fundamental weights & \begin{tabular}{l} $\omega_1=\e_1+\e_2,\omega_2=\e_1-\e_3=2\e_1+\e_2$\end{tabular}\\
\hline
Positive roots in terms of $\omega_1, \omega_2$& \begin{tabular}{l}$\alpha_1=2\omega_1-\omega_2,\alpha_2=-3\omega_1+2\omega_2,\alpha_1+\alpha_2=-\omega_1+\omega_2,$\\
$2\alpha_1+\alpha_2=\omega_1, 3\alpha_1+2\alpha_2=\omega_2,3\alpha_1+\alpha_2=3\omega_1-\omega_2$\end{tabular}\\
\hline
Action of $W$ on the fundamental weights & \begin{tabular}{l} $\sigma_1(\omega_2)=\omega_2$, $\sigma_2(\omega_1)=\omega_1$, \\
$\sigma_1(\omega_1)=\e_1=\omega_2-\omega_1, \sigma_2(\omega_2)=\e_1+2\e_2=3\omega_1-\omega_2$\end{tabular}\\
\hline
Generators $\tilde s_1, \tilde s_2$ of $H^*(BT,\Q)^{W}$ in terms of
$\omega_1, \omega_2$ & 
\begin{tabular}{l}
$\tilde s_1=\dfrac{s_{1,G_2}}{2}=\dfrac{\rule{0pt}{11pt} \e_1^2+\e_2^2+\e_3^2}{2}=3\omega_1^2-3\omega_1\omega_2+ \omega_2^2$,\\
$\tilde s_2=\dfrac{s_{3,G_2}}{2}=\dfrac{(\e_1\e_2\e_3)^2}{2}=\dfrac{4\omega_1^{6} - 12\omega_1^{5}\omega_2 + 13\omega_1^{4}\omega_2^{2} - 6\omega_1^{3}\omega_2^{3} + \omega_1^{2}\omega_2^{4}}{\strut 2}$
\end{tabular}\\
\hline
The longest element $w_0$ of $W$ & \begin{tabular}{l} $w_0 = (\sigma_1\sigma_2)^3$, $\ell(w_0)=6$\end{tabular}\\
\hline
\end{tabular}
\label{tab:g2}
\end{table}
\end{center}

Applying formula \ref{explevallinkclasses} we get
\begin{align*}
O^*(\Lk(e_{\sigma_1})) = 2&\mathbf{w}_{11}\big( m^{7} - 63 \, m^{5} n^{2} - 315 \, m^{4} n^{3} - 630 \, m^{3} n^{4} - 567 \, m^{2} n^{5} - 189 \, m n^{6} - \\
&2 \, m^{6} + 36 \, m^{5} n + 360 \, m^{4} n^{2} +300 \, m^{3} n^{3} + 1350 \, m^{2} n^{4} + 648 \, m n^{5} + 54 \, n^{6} - \\
&4 \, m^{5} - 150 \, m^{4} n - 1440 \, m^{2} n^{3} - 990 \, m n^{4} - 162 \, n^{5} + \\
&20 \, m^{4} + 264 \, m^{3} n + 828 \, m^{2} n^{2} + 864 \, m n^{3} + 234 \, n^{4} - \\
&30 \, m^{3} - 234 \, m^{2} n - 432 \, m n^{2} - 198 \, n^{3} + 24 \, m^{2} + 108 \, m n + 90 \, n^{2} - 9 \, m - 18 \, n\big), \\
O^*(\Lk(e_{\sigma_2}))  = 2&\mathbf{w}_{11}\big( 7 \, m^{6} n + 63 \, m^{5} n^{2} + 210 \, m^{4} n^{3} + 315 \, m^{3} n^{4} + 189 \, m^{2} n^{5} - 27 \, n^{7} - \\
&2 \, m^{6} - 48 \, m^{5} n - 270 \, m^{4} n^{2} - 600 \, m^{3} n^{3} - 540 \, m^{2} n^{4} - 108 \, m n^{5} + 54 \, n^{6} + \\
&10 \, m^{5} + 130 \, m^{4} n + 480 \, m^{3} n^{2} + 660 \, m^{2} n^{3} + 270 \, m n^{4} - 36 \, n^{5} - \\
&22 \, m^{4} - 184 \, m^{3} n - 432 \, m^{2} n^{2} - 312 \, m n^{3} - 18 \, n^{4} + \\
&26 \, m^{3} + 144 \, m^{2} n + 198 \, m n^{2} + 54 \, n^{3} - 18 \, m^{2} - 60 \, m n - 36 \, n^{2} + 6 \, m + 9 \, n\big),\\
O^*(\Lk(e_{\sigma_1w_0})) = &\mathbf{w}_3\big(28 \, m^{6} n + 252 \, m^{5} n^{2} + 910 \, m^{4} n^{3} + 1680 \, m^{3} n^{4} + 1638 \, m^{2} n^{5} + 756 \, m n^{6} + 108 \, n^{7} - \\
&8 \, m^{6} - 192 \, m^{5} n -  1140 \, m^{4} n^{2} - 2960 \, m^{3} n^{3} - 3780 \, m^{2} n^{4} - 2232 \, m n^{5} - 432 \, n^{6} + \\
&40 \, m^{5} + 500 \, m^{4} n + 2000 \, m^{3} n^{2} + 3520 \, m^{2} n^{3} + 2760 \, m n^{4} + 732 \, n^{5} -\\
&80 \, m^{4} - 640 \, m^{3} n - 1704 \, m^{2} n^{2} - 1856 \, m n^{3} - 684 \, n^{4} + \\
&80 \, m^{3} + 420 \, m^{2} n + 696 \, m n^{2} + 372 \, n^{3} - 40 \, m^{2} - 128 \, m n - 108 \, n^{2} + 8 \, m + 12 \, n\big),\\
O^*(\Lk(e_{\sigma_2w_0})) = &\mathbf{w}_3\big( 4 \, m^{7} + 84 \, m^{6} n + 546 \, m^{5} n^{2} + 1680 \, m^{4} n^{3} + 2730 \, m^{3} n^{4} + 2268 \, m^{2} n^{5} + 756 \, m n^{6} - \\
&32 \, m^{6} - 456 \, m^{5} n - 2220 \, m^{4} n^{2} - 5040 \, m^{3} n^{3} - 5580 \, m^{2} n^{4} - 2592 \, m n^{5} - 216 \, n^{6} + \\
&100 \, m^{5} + 1000 \, m^{4} n + 3520 \, m^{3} n^{2} + 5520 \, m^{2} n^{3} + 3660 \, m n^{4} + 648 \, n^{5} - \\
&160 \, m^{4} - 1136 \, m^{3} n - 2784 \, m^{2} n^{2} - 2736 \, m n^{3} - 816 \, n^{4} + \\
&140 \, m^{3} + 696 \, m^{2} n + 1116 \, m n^{2} + 552 \, n^{3} - 64 \, m^{2} - 216 \, m n - 192 \, n^{2} + 12 \, m + 24 \, n\big).
\end{align*}

We let $\mathbf{m}(\chi,G_2,2)$, respectively $\mathbf{m}(\chi,G_2,6)$ be the least common multiple of the coefficients of $\mathbf{w}_3$, respectively $\mathbf{w}_{11}$ in these formulas.

Moreover,
\begin{align*}
c_d(E')([X])=& A_{w_0}\big(c_5(J(\Oh(\chi))_G)\big) = 84 \, m^{5} n + 630 \, m^{4} n^{2} + 1680 \, m^{3} n^{3} + 1890 \, m^{2} n^{4} + 756 \, m n^{5} - 24 \, m^{5} -  \\
&480 \, m^{4} n -2160 \, m^{3} n^{2} - 3600 \, m^{2} n^{3} - 2160 \, m n^{4} - 216 \, n^{5} + 100 \, m^{4} + 1040 \, m^{3} n + 2880 \, m^{2} n^{2} + \\
&2640 \, m n^{3} + 540 \, n^{4} - 176 \, m^{3} - 1104 \, m^{2} n - 1728 \, m n^{2} - 624 \, n^{3} + 156 \, m^{2} + 576 \, m n + 396 \, n^{2} - \\
&72 \, m - 120 \, n + 12.
\end{align*}

\subsection{Complete intersection of type \texorpdfstring{$(d,d,\ldots, d)$}{Lg} in Grassmann varieties.}\label{fano} Here we illustrate how our methods work for bundles of rank $>1$ and give a few applications to some Fano varieties. Let $n,k,d>0$ and $r\geq 0$ be integers. We let $G=SL_{n+k}(\Co)$ and take $X$ to be the Grassmann variety $\mathrm{Gr}(k,n+k)$ of $k$-planes in $\Co^{n+k}$. Let $\Oh(1)$ be the very ample line bundle over $X$ that corresponds to the Pl\"{u}cker embedding $\mathrm{Gr}(k,n+k)\hookrightarrow \PP^{\binom{n+k}{k}-1}$, and set $\Oh(d)=\Oh(1)^{\otimes d}$. Let $E$ be the direct sum of $r+1$ copies of the line bundle $\Oh(d), r< \dim X$. The group $\widetilde G$ (see Section \ref{sectionsvsloci}) in this case is a split extension of $G=SL_{n+k}(\Co)$ by $\aut(E/X)\cong GL_{r+1}(\Co)$ (see Lemma~\ref{tildeGandG}), so it is isomorphic to $SL_{n+k}(\Co)\times GL_{r+1}(\Co)$. This group acts on $\PP(E^*)$, and the line bundle $\Oh_{\PP(E^*)}(1)$ is $\widetilde G$-equivariant. Moreover, by the Cayley trick (see Section \ref{secsmoothsec}) we have $\reg{X,E}\cong\reg{\PP(E^*),\Oh_{\PP(E^*)}(1)}$, so $\widetilde G$ acts on $\reg{X,E}$. Set $L=\Oh_{\PP(E^*)}(1)$ and $E'=J(L)$. In this subsection we give a recipe for
calculating 
the classes $S(e_{\tilde G}(E'))/y$ where $y\in H_*(\PP(E^*),\Z)$.

Let us identify $\PP(E^*)$ with $\mathrm{Gr}(k,n+k)\times \PP^r$. Then $L$ is identified with $\Oh(d)\boxtimes \Oh_{\PP^r}(1)$, and the action of $\widetilde{G}\cong SL_{n+k}(\Co)\times GL_{r+1}(\Co)$ on $L$ with the direct product of the action of~$SL_{n+k}(\Co)$ on $\Oh(d)$ and the action of $GL_{r+1}(\Co)$ on $\Oh_{\PP^r}(1)$.

Let $T_1\subset SL_{n+k}(\Co)$ and $T_2\subset GL_{r+1}(\Co)$ be the subgroups of diagonal matrices, and set $T=T_1\times T_2\subset \widetilde{G}$. Then $T$ is a maximal torus of $\widetilde{G}$. Let $P_1\subset SL_{n+k}(\Co)$ be the stabiliser of the plane spanned on the first~$k$ basis vectors in $\Co^{n+k}$ and $P_2\subset GL_{r+1}(\Co)$ be the stabiliser of the point $[1:0:\ldots:0]\in \PP^r$, and set $P=P_1\times P_2$. Then $P$ is a parabolic subgroup of $\widetilde{G}$, and $\widetilde{G}/P\cong \PP(E^*)$. 

We now identify the rational cohomology of $BP_1, BP_2, BP$ and $BG$ with subrings of $H^*(BT,\Q)$. Let $\epsilon_1,\ldots, \epsilon_{n+k}\in \mathfrak{X}(T_1)$ be the same elements as in Example~\ref{sln}; in this subsection we denote the elements $\varepsilon_1, \ldots, \varepsilon_{r+1}\in \mathfrak{X}(T_2)$ from Example~\ref{gln} by $\zeta_1,\ldots, \zeta_{r+1}$ respectively to avoid confusion. We have $H^*(BT,\Q)\cong \Q[\epsilon_1,\ldots,\epsilon_{n+k},\zeta_1,\ldots \zeta_{r+1}]/(\epsilon_1+\cdots + \epsilon_{n+k})$. We set 
\begin{align*}
&w_i=\sigma_i(\epsilon_1,\ldots,\epsilon_{k}), 1\leq i\leq k; b_i=\sigma_i(\epsilon_{k+1},\ldots,\epsilon_{n+k}), 1\leq i\leq n;\\
&u=\zeta_1, u_i=\sigma_{i}(\zeta_2,\ldots,\zeta_{r+1}), 1\leq i \leq r;\\
&s_i=\sigma_i(\epsilon_1,\ldots, \epsilon_{n+k}), i=2,\ldots, n+k; t_i=\sigma_i(\zeta_1,\ldots,\zeta_{r+1}), i=1,\ldots,r+1.
\end{align*}
Note that $w_1+b_1=0$. We have then 
\begin{align*}
&H^*(BP_1,\Q)\cong\Q[w_1,\ldots,w_k, b_2,\ldots, b_n], H^*(BP_2,\Q)\cong \Q[u,u_1,\ldots, u_r],\\
&H^*(BG,\Q)\cong \Q[s_2,\ldots, s_{n+k}, t_1,\ldots,t_{r+1}], H^*(BP,\Q)\cong \Q[w_1,\ldots,w_k,b_2,\ldots,b_n,u,u_1,\ldots, u_{r}].
\end{align*}

With these identifications the map $\beta^*\colon H^*(BG,\Q)\to H^*(BP,\Q)$ is simply the inclusion, cf.\ Section~\ref{explprojspace}, so $\beta^*(t_i)=u_i+uu_{i-1}$ (where we set $u_0=1$, $u_{r+1}=0$), and $\beta^*(s_i)$ is the degree $2i$ part of
\begin{equation*}
(1+w_1+w_2+\cdots+w_k)(1+b_1+b_2+\cdots+b_n).
\end{equation*}

The weight of the representation of $P$ which corresponds to the line bundle $L\cong \Oh(d)\boxtimes \Oh_{\PP^r}(1)$ is $-d\epsilon_1-\zeta_1$. The cotangent bundle $\Omega_{\PP(E^*)}$ is isomorphic to the direct sum $\pi_1^*\Omega_{\mathrm{Gr}(k,n+k)}\oplus \pi_2^*\Omega_{\PP^r}$ where $\pi_1\colon \PP(E^*)\to \mathrm{Gr}(k,n+k)$ and $\pi_2\colon \PP(E^*)\to\PP^r$ are the projections. Let $U$, respectively $Q$ be the tautological rank $k$ vector bundle, respectively the universal quotient bundle over $\mathrm{Gr}(k,n+k)$. By Example \ref{tautlinebund},
$\Omega_{\mathrm{Gr}(k,n+k)}\cong\Hom(Q,U)$ is obtained from the representation of $P_1$ with weights $\epsilon_i-\epsilon_j,i\in\{1,\ldots, k\},j\in\{k+1,\ldots, n+k\}$. Similarly, we know from Section~\ref{explprojspace} that the weights of the representation of $P_2$ that induces $\Omega_{\PP^r}$ are $\zeta_1-\zeta_i,i\in\{2,\ldots, r+1\}$.

So by exact sequence \ref{sesjetbundle} the weights of the representation of $P$ such that the associated vector bundle over $\widetilde G/P$ is $J(L)$ are
$$-d\epsilon_1-\zeta_i,i\in\{1,\ldots, r+1\}, \epsilon_i-\epsilon_j+(-d\epsilon_1-\zeta_1),i\in\{1,\ldots, k\},j\in\{k+1,\ldots, n+k\},$$ and by Lemma \ref{inducedvectbund1} the product of these is the Euler class $e_{\widetilde G}(J(L))\in H^*(BT,\Q)\cong H^*_{\widetilde G}(\widetilde G/P,\Q)$.

Set $a_i=\beta^*(s_i)$, $2\leq i\leq n+k$, and $d_i=\beta^*(t_i)$, $1\leq i\leq r+1$. Suppose we have found a decomposition $e_G(E')=\sum a_ip_i+\sum d_jq_j$ with $p_i,q_i\in H^*(BP,\Q)$. Then using Theorem \ref{maintheorem} and formula \ref{formulas1} we see that for every $y\in H_*(\PP(E^*),\Z)$ we have
\begin{equation}\label{lkgrassmannian}
	O^*(\Lk(y))=S(e_G(E'))/y=\left.\left(\sum^{n+k}_{i=2}\bar\gamma(s_i)\times\alpha^*(p_i) +\sum_{j=1}^{r+1}\bar\gamma(t_j)\times\alpha^*(q_j) \right)\middle/y\right..
\end{equation}

The cohomology classes $\bar\gamma(s_i)$, and $\bar\gamma(t_j)$ were calculated in Examples~\ref{gln} and~\ref{sln}. In particular, these classes are free multiplicative generators of $H^*(\widetilde{G},\Z)$. Let us describe the map $\alpha^*$. As in Section~\ref{explprojspace}, we have $\alpha^*(u)=-c_1(\Oh_{\PP^r}(1))$, $\alpha^*(u_i)=(-1)^{i-1}(\alpha^*(u))^i$.

Calculating $\alpha^*(w_i)$ and $\alpha^*(b_i)$ is more complicated, but to do this we only need the classical Schubert calculus, and not its generalised version \cite{BGG}. First we note that $\alpha^*(w_i)=c_i(U)$ and $\alpha^*(b_i)=c_i(Q)$ by Lemma \ref{inducedvectbund1}. 
Next, recall that a {\it partition} $\lambda$ of length $l$ is a non-increasing sequence $\lambda_1\geq \lambda_2\geq \ldots \geq \lambda_l$ of positive integers. Set $|\lambda|=\lambda_1+\ldots+\lambda_l$. Let $\mathcal{P}_{n,k}$ be the set of all partitions $\lambda$ of length at most $n$ and such that $|\lambda|\leq nk$. 
\begin{prop}\label{schubert}
There is a $\Z$-basis $\{c_{\lambda}\}_{\lambda\in\mathcal{P}_{n,k}}$ of $H^{*}(\mathrm{Gr}(k,n+k),\Z)$ such that
\begin{enumerate}
    \item $c_\lambda\in H^{2|\lambda|}(\mathrm{Gr}(k,n+k),\Z)$, $\lambda\in \mathcal{P}_{n,k}$;
    \item $\alpha^*(w_i)=c_i(U)=(-1)^ic_{(1,\ldots,1)}\in H^{2i}(\mathrm{Gr}(k,n+k),\Z), i=1,\ldots,k$;
    \item $\alpha^*(b_i)=c_i(Q)=c_{(i)}\in H^{2i}(\mathrm{Gr}(k,n+k),\Z), i=2,\ldots n$.
\end{enumerate}
\end{prop}
\begin{proof}
See~\cite{Ful98}*{Sections 14.5--14.7}.
\end{proof}
Moreover, if $c_{\lambda}c_{\mu}=\sum N_{\lambda,\mu,\rho}c_{\rho}$, then the numbers $N_{\lambda,\mu,\rho}$ can be calculated in terms of Young diagrams by the Littlewood-Richardson rule (see e.g.\ \cite{Ful98}*{Lemma 14.5.3}).

The description of the cup product, formula \ref{lkgrassmannian} and Proposition \ref{schubert} allow one to calculate for every $l\geq 1$ the order $i_l$ of the cokernel of map~\ref{yetanothers}.
For illustration we give here the answer for smooth intersections of the Pl\"ucker embedded $\mathrm{Gr}(2,6)$ and a projective subspace $L^5\subset \PP^{14}(\Co)$ of codimension 5 ($k=2$, $n=4$, $d=1$, $r=4$).

\begin{center}
\begin{table}[h!]
    \caption{Smooth intersections of $\mathrm{Gr}(2,6)$ and $L^5$ ($k=2$, $n=4$, $d=1$, $r=4$)}
    \begin{tabular}[c]{|c|c|c|c|c|c|c|c|c|} \hline
	$l$ & 1 & 2 & 3 & 4 & 5 & 6 \\ \hline
	$\dim H_{26-2l}(\PP(E^*),\Z)$ & 1 & 2 & 4 & 6 & 9 & 10 \\ \hline 
	$\dim P^{2l-1}$ & 1 & 2 & 2 & 2 & 2 & 1 \\ \hline
	$i_l$ & 48 & 384 & 2 & 10 & 11 & 3 \\ \hline
    \end{tabular}
    \label{tab:linearsectionofGr26_dim3}
\end{table}
\end{center}

More examples are given in Appendix~\ref{examples of fano}. By Corollary~\ref{stabilizerisproduct}, if none of the orders in the bottom row of the table for given parameters $k,n,d,r$ is infinite, then the product of these numbers 
is the index of $\Lambda^{\dim\widetilde{G}}_{\widetilde G,E'}$ in $\fr{\dim\widetilde{G}}(\widetilde{G},\Z)$, and $\Lambda^*_{E',\widetilde G}$ has finite index in $\fr{*}(\widetilde G,\Z)$; note that in this case the product is divisible by every $|\widetilde G_s|, s\in\reg{X,E}$. For comparison we also include a case when the automorphism group is known to be infinite, namely transversal intersections of $\mathrm{Gr}(2,5)\subset\PP^6(\Co)$ and projective subspaces of codimension 3 (see e.g.\ \cite{KPS18}*{Theorem~5.1.1}). As expected, one of the cokernels turns out to be infinite. As in the case of complete flag varieties, the answers were obtained using Singular~\cite{Singular}.

\subsection{Summary}\label{summaryexamples}

Here we summarise {what we have done in} Sections \ref{projquad}-\ref{fano}, {prove a few corollaries and discuss several related results}. 

\begin{thm}\label{mainthmprojhyp}
Let $G,X$ and $E$ be as in table \ref{tab:autogroupstable}; in the table $Q_k$ denotes a non-singular quadratic hypersurface in $\PP^{k+1}(\Co)$. Let $\widetilde{G}, \bar G$ and $\aut (E/X)$ be the groups defined in Section \ref{sectionsvsloci}. (Recall that 
the group $\widetilde{G}$ contains both $G$ and $\aut(E/X)$.)
Suppose that $\reg{X,E}\neq \varnothing$ and that the number in column 4 of the table is non-zero. (Note that this holds if $d\geq 3$ for $X=\PP^n(\Co),E=\Oh(d)$, and for $d\geq 2$ if $X=Q_k$ and $E=\Oh(d)$.) Then

\begin{enumerate}
\item The geometric quotient $\reg{X,E}/\widetilde{G}$ exists and is affine. The group $\aut(E/X)$ is reductive, and the geometric quotients $Z_{\mathrm{reg}}= \reg{X,E})/\aut(E/X)$ and $Z_{\mathrm{reg}}/\bar G$ also exist and are affine. Moreover, we have
\begin{align*}
H^*(\reg{X,E},\Q) & \cong H^*(\widetilde{G},\Q)\otimes H^*(\reg{X,E}/\widetilde{G},\Q),\\ 
H^*(Z_{\mathrm{reg}},\Q) & \cong H^*(\bar G,\Q)\otimes H^*(Z_{\mathrm{reg}}/\bar G,\Q)
\end{align*}
both as rings and mixed Hodge structures.

\item For any $s\in\reg{X,E}$ the order of the stabiliser $\widetilde{G}_s$ divides the number in column 4 of the table, and the order of $\bar G_{Z(s)}$ divides the same number divided by $|Z(G)|$.
\end{enumerate}

\end{thm}

\begin{center}
\begin{table}[h]
\caption{}
\begin{tabular}{|c|c|c|c|}\hline
 $G$ & $X$ & $E$ & \begin{tabular}{c} Multiple of \\ $|\widetilde{G}_s|,s\in\reg{X,E}$ \end{tabular} \\ \hline
 $SL_{n+1}(\Co)$ & $\PP^n(\Co)$ & $\Oh(d)$ &  \begin{tabular}{c}$(n+1)(d-1)^n \prod_{i=2}^{n+1} \mathbf{m}(d,n,i)$\\ (see Section \ref{projquad}) \end{tabular} \\ \hline
 &&& \\[-1.4em]
 $SO_{2n+1}(\Co)$ & $Q_{2n-1}$ & $\Oh(d)$ &  \begin{tabular}{c}
 $2^{\left\lfloor\frac{3n}{2}\right\rfloor+1}\left(\sum_{i=0}^{2n-1} (i+1)(d-1)^i\right)\prod_{i=1}^{n} \mathbf{m}'(d,2n,2i)$\\ (See Section \ref{projquad}) \end{tabular}\\ \hline
&&& \\[-1.4em]
 $SO_{2n}(\Co)$ & $Q_{2n-2}$  & $\Oh(d)$ &  \begin{tabular}{c} $2^{\left\lfloor\frac{3n}{2}\right\rfloor}\left(\sum_{i=0}^{2n-2} (i+1)(d-1)^i\right)\prod_{i=1}^{n} \mathbf{m}'(d,2n-1,2i)$\\ (See Section \ref{projquad}) \end{tabular}\\ \hline
 $SL_3(\Co)$ & $G/B$ & \begin{tabular}{c} $\Oh(\chi)$\\ $\chi=m\omega_1+n\omega_2$\end{tabular} & \begin{tabular}{c} $\mathbf{m}(\chi,SL_3(\Co),2)\cdot \mathbf{m}(\chi,SL_3(\Co),3)\cdot \deg\Sing(\Oh(\chi))$\\ (see Section \ref{completeflagvarieties}) \end{tabular}\\ \hline
 $Sp_4(\Co)$ & $G/B$ & \begin{tabular}{c} $\Oh(\chi)$\\ $\chi=m\omega_1+n\omega_2$\end{tabular} & \begin{tabular}{c} $\mathbf{m}(\chi,Sp_4(\Co),2)\cdot \mathbf{m}(\chi,Sp_4(\Co),4)\cdot \deg\Sing(\Oh(\chi))$\\ 
 (see Section \ref{sp4short}) \end{tabular}\\ \hline
 $G_2$ & $G/B$ & \begin{tabular}{c} $\Oh(\chi)$\\ $\chi=m\omega_1+n\omega_2$\end{tabular} & \begin{tabular}{c} $\mathbf{m}(\chi,G_2,2)\cdot \mathbf{m}(\chi,G_2,6)\cdot \deg\Sing(\Oh(\chi))$\\ 
 (see Section \ref{G2short}) \end{tabular}\\
\hline
$SL_{5}(\Co)$ & $\PP^4(\Co)$ & $\Oh(2)^{\oplus 3}$ & \begin{tabular}{c} $2^{18}\cdot 3\cdot 5^2$ (see table \ref{tab:intersectionofthreequadricsinP4}) \end{tabular}\\
\hline
 $SL_{6}(\Co)$ & $\PP^5(\Co)$ & $\Oh(2)^{\oplus 2}$ & \begin{tabular}{c} $2^{15}\cdot 3^{3}\cdot 5$ (see table \ref{tab:intersectionoftwoquadrics}) \end{tabular}\\
\hline
 $SL_{7}(\Co)$ & $\PP^6(\Co)$ & $\Oh(2)^{\oplus 3}$ & \begin{tabular}{c} $2^{24}\cdot 3^2\cdot 5^2\cdot 7^3$ (see table \ref{tab:intersectionofthreequadrics}) \end{tabular}\\
\hline
 $SL_{6}(\Co)$ & $\mathrm{Gr}(2,6)$ & $\Oh(1)^{\oplus 7}$ & \begin{tabular}{c} $2^{16}\cdot 3^3 \cdot 7\cdot 11$ (see table \ref{tab:linearsectionofGr26_dim1}) \end{tabular}\\
\hline
$SL_{6}(\Co)$ & $\mathrm{Gr}(2,6)$ & $\Oh(1)^{\oplus 5}$ & \begin{tabular}{c} $2^{13}\cdot 3^3 \cdot 5\cdot 11$ (see table \ref{tab:linearsectionofGr26_dim3}) \end{tabular}\\
\hline
 $SL_{7}(\Co)$ & $\mathrm{Gr}(2,7)$ & $\Oh(1)^{\oplus 6}$ & \begin{tabular}{c} $2^{13}\cdot 3^{2}\cdot 5\cdot 7^{2}\cdot 19\cdot 73$ (see table \ref{tab:linearsectionofGr27}) \end{tabular}\\
\hline
$SL_{6}(\Co)$ & $\mathrm{Gr}(3,6)$ & $\Oh(1)^{\oplus 5}$ & \begin{tabular}{c} $2^{17}\cdot 3^{11}\cdot 5^{2}$ (see table \ref{tab:linearsectionofGr36}) \end{tabular}\\
\hline
\end{tabular}
\label{tab:autogroupstable}
\end{table}
\end{center}

\begin{rmk}
Here we explain the exponent of $2$ in column 4 of table \ref{tab:autogroupstable} for quadrics. For $X=Q_{2n-1}$ the exponent is $\left\lfloor\frac{3n}{2}\right\rfloor+1=\left\lceil\frac{n}{2}\right\rceil+2\left\lfloor\frac{n}{2}\right\rfloor+1$. The first, respectively second summand in the right hand side comes from the linking classes $\Lk(Z)$ with the degree of $Z$ being $<\dim X$, respectively $>\dim X$. The last summand, 1, is there because of the 2 in the formula in Proposition \ref{degreediscrhypprojquad} for quadrics.

For $Q_{2n-2}, n$ odd the exponent is $\left\lfloor\frac{3n}{2}\right\rfloor=\frac{n-1}{2}+2\frac{n-1}{2}+1$ with each of the summands having the same origin as in the previous paragraph. A similar result holds for $Q_{2n-2}, n$ even: we have $\left\lfloor\frac{3n}{2}\right\rfloor=\frac{n}{2}-1+2(\frac{n}{2}-1)+1+2$. The source of the first three summands is the same as before, and the last one, 2, comes from the classes $\Lk(W)$ with $W$ of middle degree.
\end{rmk}

\begin{proof} Let us first prove the theorem for projective hypersurfaces. Let $G,X,E$ be as in the first row of table \ref{tab:autogroupstable}. Set $E'=J(E)$. The group $\widetilde{G}$ in this case is simply $\Co^*\times G$ by Lemma \ref{linebundletrivial}. We find using Proposition~\ref{propexplprojspace} that $\Lambda^*_{E',G}$ has finite index in $\fr{*}(G,\Z)=H^*(G,\Z)$, and that the index of $\Lambda^{\dim G}_{E',G}$ in $H^*(G,\Z)$ is $\prod_{i=2}^{n+1} \mathbf{m}(d,n,i)$. We deduce the analogues of these results for $\widetilde G$ from Corollary \ref{descentc*2ndrev} and Proposition \ref{degreediscrhypprojquad}. All statements of Theorem \ref{mainthmprojhyp} that involve the group $\widetilde G$ now follow from Corollary \ref{maincorappl} by setting $G'=\widetilde G$, and to prove the remaining statements involving $\bar G=PGL_{n+1}(\Co)$ we use Theorem \ref{thmquotslice} (applied to $U=Z_{\mathrm{reg}}$), Lemma~\ref{linebundletrivial} again and Remark \ref{autobarg}.

For hypersurfaces of quadrics and complete flag varieties the proof is very similar; instead of Proposition~\ref{propexplprojspace} we use analogous results from Sections~\ref{oddquad}, \ref{evenquad} and \ref{completeflagvarieties}. For higher rank bundles over Grassmannians we use the results of Section~\ref{fano} to calculate $\Lambda^*_{E',\widetilde{G}}$ for $E'=J(\Oh_{\PP(E^*)}(1))$, and to prove the result about the automorphism groups of the zero loci we use Corollary~\ref{Sum of line bundles} (instead of Lemma~\ref{linebundletrivial}) and Remark \ref{autobarg}.
\end{proof}

\begin{rmk}\label{preprint}
A version of theorem~\ref{mainthmprojhyp} for projective hypersurfaces (with somewhat worse bounds on the orders of the automorphism groups) first appeared in the preprint \cite{Gor12} by the first named author. This preprint is superseded by the present paper.
\end{rmk}

\begin{rmk}\label{pointsp1}
The zero locus of a regular section of $\Oh(d)$ over $\PP^1(\Co)$ is a $d$-element subset $Z\subset\PP^1(\Co)$. Theorem \ref{mainthmprojhyp} in this example tells us that the order of the projective automorphism group $\bar G_Z$ of $Z$ divides $d(d-1)(d-2)$. Here is an alternative proof of this communicated to us by F.\ Catanese: observe that $\bar G_Z$ acts freely on the set of ordered triples of elements of $Z$.
\end{rmk}

\begin{rmk}
For smooth hypersurfaces in projective spaces and non-degenerate quadrics, as well as for smooth complete intersections of two quadrics in $\PP^5(\Co)$ or $\PP^6(\Co)$ the finiteness of the automorphism groups 
is known, see \cite{Benoist13} and references therein. For regular sections of $\Oh(1)^{\oplus 5}$ over $\mathrm{Gr}(2,6)$ and of $\Oh(1)^{\oplus 5}$ over $\mathrm{Gr}(2,7)$ this result follows from \cite{KPS18}*{Corollary 4.3.5} and \cite{Manivel15} respectively, and in the remaining cases it seems to be new. The explicit bound on the orders of the automorphism groups given in Theorem \ref{mainthmprojhyp} is new except for smooth complete intersections of quadrics and in the cases mentioned in Remark \ref{knownlcm}.
\end{rmk}

\begin{rmk} For smooth complete intersections in characteristic 0 the projective automorphism group coincides with the full automorphism group, except for curves and K3 surfaces, see \cite{Benoist13}, Theorem 3.1 and the references after that theorem.

Similarly, every automorphism of the zero locus $Z$ of a regular section of $\Oh(1)^{\oplus 5}$ over the Grassmannian $\mathrm{Gr}(2,6)$ is induced by an element of $\aut(\mathrm{Gr}(2,6))=PSL_6(\Co)$. To see this note that $Z$ is a smooth Fano threefold of index $1$, Picard rank $1$ and genus $8$. By Theorem B.1.1 from~\cite{KPS18}, $Z$ can be equipped with a stable vector bundle $\mathcal{E}_2$ of rank~$2$ such that $\mathcal{E}_2$ is globally generated, $\dim \Gamma(Z,\mathcal{E}_2)=6$ and $\Lambda^2\mathcal{E}_2\cong \Oh(1)$. Moreover, by Proposition B.1.5 ibid.\ any two such bundles are isomorphic. Therefore $\mathcal{E}_2$ is an $\aut(Z)$-equivariant vector bundle. Since $\mathcal{E}_2$ is globally generated, it defines an $\aut(Z)$-equivariant closed embedding $Z\hookrightarrow \mathrm{Gr}(2,\Gamma(Z,\mathcal{E}_2))\subset \PP^{14}$ such that the image is the transversal intersection of $\mathrm{Gr}(2,\Gamma(Z,\mathcal{E}_2))$ and a projective subspace of codimension $5$. So every automorphism of $Z$ can be extended to an automorphism of $\mathrm{Gr}(2,6)$.

Note also that the image of $SO_{k+2}(\Co)$ in $\aut (Q_k)$ is the whole group if $k$ is odd and has index 2 if $k$ is even. As a consequence, the order of the full automorphism group of a regular section of the bundle $\Oh(d),d\geq 3$ over $Q_k$ divides the expression in column 4 of table \ref{tab:autogroupstable}.
\end{rmk}

\begin{rmk}
V.~Gonz\'alez-Aguilera and A.~Liendo show in \cite{GAL13}*{Theorem~1.3} 
that if $p$ is a prime number that does not divide $d$ or $d-1$ and $p^l$ is the order of an automorphism of a smooth degree $d$ hypersurface $Z$ of $\PP^n(\Co)$, then $p^l$ divides the least common multiple of $\mathbf{m}(d,n,i)$ for $i=2,\ldots, n+1$. This implies that the order of any abelian subgroup of the projective automorphism group $\paut(Z)=PGL_{n+1}(\Co)_Z$ of $Z$ divides $$\left( \mathrm{LCM}_{i=2}^{n+1} \mathbf{m}(d,n,i)\right)^n,$$ as every such subgroup is included in some maximal torus of $PGL_{n+1}(\Co)$, which has dimension $n$.

Recall that by Jordan's theorem there is an integer $J(n+1)$, called the {\it Jordan constant for $GL_{n+1}(\Co)$}, such that every finite subgroup of $GL_{n+1}(\Co)$ has an abelian normal subgroup of index $\leq J(n+1)$; moreover, $J(n+1)= (n+2)!$ for $n\geq 70$, see \cite{Col07} and references therein. If we drop the requirement that the abelian subgroup should be normal, we get another constant, which we will call the {\it weak Jordan constant} following \cite{PS17} and denote $J_w(n+1)$. Note that we have $J(n+1)\geq J_w(n+1)\geq \sqrt{J(n+1)}$ by the Chermak-Delgado theorem (see e.g.\ \cite{Isaacs08}*{Theorem 1.41}). The same results hold for $PGL_{n+1}(\Co)$, with the same constants $J(n+1)$ and $J_w(n+1)$.

So using the results of \cite{GAL13} 
one can deduce that if $p$ does not divide $d(d-1)$, then the order of the Sylow $p$-subgroup of $\paut(Z)$ divides
$$\left( \mathrm{LCM}_{i=2}^{n+1} \mathbf{m}(d,n,i)\right)^n p^{\left\lfloor\log_p J_w(n+1)\right\rfloor}.$$
Note that under the same assumptions on $p$ Theorem \ref{mainthmprojhyp} gives a better bound, namely $\prod_{i=2}^{n+1} \mathbf{m}(d,n,i)$.

Let us briefly explain what happens when $p$ divides $d$ or $d-1$. We call a projective automorphism of $Z$ {\it $F$-liftable} if it has a lift to $GL_{n+1}(\Co)$ that preserves an equation of $Z$. In~\cite{GAL20}*{Theorem 2.1} the authors extend their previous results for $p\nmid d(d-1)$ to $F$-liftable automorphisms of order $p^l$, $p|d(d-1)$. The answer is different, but still can be given in terms of the numbers $\mathbf{m}(d,n,i)$. Assume now that every element of the Sylow $p$-subgroup of $\paut(Z)$ is $F$-liftable. Then a similar argument as above shows that Theorem~\ref{mainthmprojhyp} still gives a better bound, with one potential exception: if $p$ divides both $d$ and $n+1$, then it is not clear how to compare our bound on $|\paut(Z)|$ with the one obtained by applying the results of \cite{GAL20}; also, we do not know whether or not smooth hypersurfaces whose projective automorphism group has all these properties in fact exist.

Finally, we note that in~\cite{Zheng20} Z.~Zheng obtained results similar to those in~\cite{GAL13} and~\cite{GAL20} that we have just described. More precisely, suppose $p^l$ is the order of a (not necessarily $F$-liftable) automorphism of $Z$; then \cite{Zheng20}*{Theorem 1.5} gives one a bound on $l$ that is identical to the one by V.~Gonz\'alez-Aguilera and A.~Liendo (see above) except when $p$ divides $d$ but does not divide $d-1$, in which case it is slightly weaker.

\end{rmk}

\begin{rmk}\label{knownlcm}
In some cases there is a classification of the automorphism groups of smooth degree $d$ hypersurfaces in $\PP^n(\Co)$, which allows one to find the least common multiple of the orders. The examples that we know of are given in the following table, where we set $U_{d,n}=\reg{\PP^n(\Co),\Oh(d)}$:
\begin{center}
\begin{table}[h]
\begin{tabular}{|c|c|c|c|}\hline
$(n,d)$ & \begin{tabular}{c} Multiple of $|\bar G_{Z(s)}|,$\\ $s\in U_{d,n}$ from Theorem \ref{mainthmprojhyp}\end{tabular} &\begin{tabular}{c} LCM of $|\bar G_{Z(s)}|,$\\ $s\in U_{d,n}$\end{tabular} & References\\
\hline
$(2,3)$ & $2^3\cdot 3^3$ &  $2^2\cdot 3^3$ & \cite{Dolgachev12}*{section 3}\\
\hline
$(2,4)$ & $2^5\cdot 3^4\cdot 7$ & $ 2^5\cdot 3^3\cdot 7$ &\cite{Dolgachev12}*{section 6}\\
\hline
$(3,3)$ & $2^6\cdot3^4\cdot 5$ & $2^3\cdot 3^4\cdot 5$ &\cite{Hosoh97}, \cite{Segre42}, \cite{Dolgachev12}*{section 9}\\
\hline
\end{tabular}
\end{table}
\end{center}
\end{rmk}

\begin{rmk}\label{m5}
The moduli space $\mathcal{M}_5$ of smooth complex genus 5 curves is the union of the hyperelliptic locus, the trigonal locus and the locus $\mathcal{M}^{\mathrm{gen}}_5$ of curves that are complete intersections of three quadrics in $\PP^4(\Co)$. For curves of the last type the canonical class is the hyperplane section, so the projective automorphism group $\subset PGL_5(\Co)$ coincides with the full automorphism group. Using Theorem~\ref{mainthmprojhyp} we obtain an isomorphism
\begin{equation}\label{isom5}
H^*(U_{(2,2,2),4},\Q)\cong H^*(SL_5(\Co)\times GL_3(\Co),\Q)\otimes H^*(\mathcal{M}^{\mathrm{gen}}_5,\Q)
\end{equation}
where $U_{(2,2,2),4}=\reg{\PP^4(\Co),\Oh(2)^{\oplus 3}}$. This isomorphism suggests a strategy for calculating $H^*(\mathcal{M}_5,\Q)$. Namely, one could start by calculating $H^*(U_{(2,2,2),4},\Q)$ (using conical resolutions or otherwise), and then deduce $H^*(\mathcal{M}^{\mathrm{gen}}_5,\Q)$ using~\ref{isom5}. The rational cohomology of the trigonal locus has been calculated by A.~Zheng \cite{AngelinaZheng20}, and that of the hyperelliptic locus is trivial. Putting all this together we get a spectral sequence with known $E_1$ that converges to $H^*_c(\mathcal{M}_5,\Q)$. It is reasonable to expect that all differentials will be zero or at least tractable, cf.\ the case of genus 4 considered in~\cite{Tom05}. 

Implementing this plan is beyond the scope of this paper, but here is an illustration. Note that $U_{(2,2,2),4}$ is the complement of a hypersurface in an affine space, so the reduced rational cohomology of $U_{(2,2,2),4}$ has no classes of weight equal the degree. By~\ref{isom5} the same is true for $\mathcal{M}^{\mathrm{gen}}_5$; combining this with \cite{AngelinaZheng20}*{Corollary~1.2} we conclude that $W_i H^i(\mathcal{M}_5,\Q)=\Q(-i/2)$ for $i=0,2,4,6$, and 0 otherwise.
\end{rmk}

\begin{rmk}\label{m8}
Similarly, a general curve of genus 8 is the intersection of the Pl\"ucker embedded $\mathrm{Gr}(2,6)$ and a transversal projective subspace of codimension 7. It follows from the results of S.~Mukai \cite{Mukai93}*{Theorem~A} that the embedding is uniquely determined by the curve up to an automorphism of the Grassmannian. So if we denote the locus of such curves by $\mathcal{M}^{\mathrm{gen}}_8\subset\mathcal{M}_8$, then by Theorem~\ref{mainthmprojhyp} we have an isomorphism
$$H^*(\reg{\mathrm{Gr}(2,6),\Oh(1)^{\oplus 7}},\Q)\cong H^*(SL_6(\Co)\times GL_7(\Co),\Q)\otimes H^*(\mathcal{M}^{\mathrm{gen}}_8,\Q).$$
\end{rmk}

\section{Conjectures}\label{secconjectures}

In this section we propose and discuss a few conjectures. Below $G$ is a complex semi-simple group with finite fundamental group, $P\subset G$ is a parabolic subgroup, $X=G/P$, and $T\subset P$ is a maximal torus of $G$.

\subsection{Genericity conjectures}\label{genconj}
\begin{conj}[Genericity conjecture, weak version]
For every $G$ and $P\subsetneq G$ there is a $G$-equivariant line bundle $L$ on $G/P$ such that the division theorem (see the beginning of Section \ref{sectapplications}) holds for the action of $G$ on $\reg{X,L}$.
\end{conj}

The character group $\mathfrak{X}(P)$ of $P$ is a subgroup of $\mathfrak{X}(T)\cong\Z^{\dim T}$. We will say that a character $\chi\in \mathfrak{X}(P)$ is {\it exceptional} if the division theorem does not hold for the action of $G$ on $\reg{X,G\times_{\chi}\Co}$. (Note that if $\Gamma(X,G\times_{\chi}\Co)=0$, then $\reg{X, G\times_{\chi}\Co}=\varnothing$, and for this space the division theorem is vacuously true.) It follows from our results in Sections \ref{genalg} and \ref{completeflagvarieties} that if the weak form of the genericity conjecture is true, then exceptional elements form a proper Zariski closed subset of $\mathfrak{X}(P)$, see Remark \ref{polynomial}.

\begin{conj}[Genericity conjecture, strong version]
For every $G$ and $P\subsetneq G$ there are finitely many exceptional elements in $\mathfrak{X}(P)$.
\end{conj}

\begin{rmk}\label{pp1}
Suppose $P_1\supset P$ is another parabolic subgroup and set $X_1=G/P_1$. Let $p:X\to X_1$ be the projection. On the one hand, if the strong genericity conjecture holds for $X$, then it also holds for $X_1$. On the other hand, suppose $L_1$ is a $G$-equivariant line bundle over $X_1$ such that the ring $\Lambda^*_{J(L_1),G}$ (see Notation \ref{lambdae}) has finite index in $\fr{*}(G,\Z)$. We claim that then so does $\Lambda^*_{J(L),G}$ where $L=p^*(L_1)$. Let us sketch a proof. Note that there is a $G$-equivariant exact sequence
$$0\to p^*(J(L_1))\to J(L)\to \Omega_{X/X_1}\to 0$$ where $\Omega_{X/X_1}$ denotes the relative cotangent bundle. We identify both $H^*(BP,\Q)$ and $H^*(BP_1,\Q)$ with subrings of $H^*(BT,\Q)$, cf.\ Section \ref{projquad}, and deduce using formula \ref{formulas1} that in the rational cohomology of $G\times X$ we have $$S\big(e_G(J(L))\big)= (\id\times p)^*\Big(S\big(e_G(J(L_1))\big)\Big)\smile(1\times e(\Omega_{X/X_1})).$$ The map $H^*(X_1,\Q)\to H^{*+2\dim X-2\dim X_1}(X,\Q)$ given by $x\mapsto p^*(x)\smile e(\Omega_{X/X_1})$ is injective, which implies the claim.
\end{rmk}

By Theorem~\ref{mainthmprojhyp}, the strong genericity conjecture is true for projective spaces and non-degenerate quadrics, and one can prove it for the complete flag variety of $G=SL_3(\Co)$ as follows. By inspection, there are finitely many exceptions when $m=n$ or $m,n\in\{0,1\}$. For $w\in W$, the Weyl group of $G$, set $S_w=S(e_G(E'))/e_w=O^*(\Lk(e_w))$, and make the substitution $m'=m-1,n'=n-1$. Then all coefficients of $S_{\sigma_1 w_0}+S_{\sigma_2 w_0}$ will be negative, which implies $\mathbf{m}(\chi,SL_3(\Co),2)\neq 0$ for $m,n>1$. Moreover, $\frac{1}{m'-n'}\left(S_{\sigma_1}+S_{\sigma_2}\right)$ becomes a polynomial with positive coefficients, so $\mathbf{m}(\chi,SL_3(\Co),3)\neq 0$ when $m,n>1$ and $m\neq n$. These observations indicate that there may be a positive formula (cf.\ \cite{CW97}) for the analogues of $\mathbf{m}(\chi,SL_3(\Co),2)$ and $\mathbf{m}(\chi,SL_3(\Co),3)$ for general $G$.

Using Remark \ref{pp1} we can now extend both the strong and weak versions of the conjecture to more examples. For instance, the strong genericity conjecture is true for $G=SL_3(\Co)$ and $P$ arbitrary, and the weak one for $G=SL_{n+1}(\Co)$ and $P$ such that $G/P$ admits a $G$-map to $\PP^n(\Co)$.

\subsection{Parsimony conjecture}\label{parsimconj}

\begin{conj}[Parsimony conjecture]
Let $L$ be a $G$-equivariant line bundle over $X$, and set $E'=J(L)$. Let~$\mathbf{m}$, respectively $\delta$ denote the index of $\Lambda^{\dim G}_{E',G}$ (see Notation \ref{lambdae}) in $H^{\dim G}(G,\Z)$, respectively the integer $c_{\dim X}(E')([X])$. Suppose $\reg{X,L}\neq\varnothing,\mathbf{m}<\infty$ and $\delta\mathbf{m}\neq 0$. Then we conjecture that the number $\frac{\delta\mathbf{m}}{|Z(G)|}$, which is an integer, is parsimonious in the following sense: every prime that occurs in $\frac{\delta\mathbf{m}}{|Z(G)|}$ can be realised as the order of a $G$-automorphism of $Z(s)$ for some $s\in\reg{X,L}$.
\end{conj}

The procedure for calculating $\mathbf{m}$ for any given $G,P,L$ is described in Sections \ref{genalg} and \ref{completeflagvarieties}.

As we mentioned in the Introduction, the parsimony conjecture is true for $G=SL_{n+1}(\Co), X=\PP^n(\Co), L=\Oh(d),d\geq 3$ by the results of V.~Gonz\'alez-Aguilera and A.~Liendo \cite{GAL13}, and this is the only infinite family of examples we are aware of.

\appendix

\section{}\label{app_proofs}

\subsection{Proof of Proposition \ref{transgression}} Let $Y$ be a topological space. We let $C_*(Y)$ denote the integral singular chain complex of $Y$, and we set $\widetilde{C}_*(Y)$ to be the kernel of the natural augmentation $C_*(Y)\to\Z$. We will denote the singular cochain complex $\Hom(C_*(Y),R)$ of $Y$ by $C^*(Y,R)$. {Recall that the differential in $C^*(Y,R)$ is the dual of the differential in $C_*(Y)$, see Remark~{\ref{consistency}}; we will use $\delta$ to denote the differentials in singular cochain complexes.}
%

\begin{lmm}\label{directlimtrees}
A path-connected polyhedron $X$ is set-theoretically the direct limit of a family $\mathcal{T}$ of subspaces homeomorphic to trees.
\end{lmm}

\begin{proof} Let $T_0$ be a maximal subtree of the 1-skeleton of $X$. For each face $F$ of $X$ we choose a vertex $v(F)\in T_0\cap F$. If $x\in X$, we let $I(x)$ be the closed line segment that joins $x$ and $v(F)$ where $F$ is the unique face such that $x$ belongs to the interior of $F$. The required family $\mathcal{T}$ of subspaces of $X$ is formed by $T_0\cup\bigcup_{x\in X'} I(x)$ where $X'\subset X$ is an arbitrary finite subset.
\end{proof}

\begin{lmm}\label{zeroondegenerate}
Let $X$ and $\mathcal{T}$ be as in the previous lemma. Then every closed singular cochain $\varepsilon'\in C^{m}(X,R),m\geq 2$ is homologous to a cochain $\varepsilon$ that is zero on every singular simplex $\in C_*(T), T\in\mathcal{T}$.
\end{lmm}

\begin{proof} Set $\widetilde{C}_*=\varinjlim_{T\in \mathcal{T}} \widetilde{C}_*(T)$. This is a complex of free $R$-modules that is acyclic in all degrees, so the zero and identity endomorphisms of $\widetilde C_*$ are chain homotopic, which implies that the same is true for $\Hom(\widetilde{C}_*,R)$. We now apply the long exact sequence of
$$0\to \Hom(\widetilde{C}_*(X)/\widetilde{C}_*,R)\to \Hom(\widetilde{C}_*(X),R)\to \Hom(\widetilde{C}_*,R)\to 0.$$
%
\end{proof}

{\it Proof of the proposition.} Let $p:\Tot(E)\to X$ be the projection; recall that $p_0$ denotes the restriction of $p$ to $\Tot_0(E)$. 
After pulling $E$ back if necessary to a space weakly equivalent to $X$ we may assume that the base $X$ is a polyhedron. By the construction of the transgression given e.g.\ in \cite{Mc01}*{Section~6.2} there are singular cochains $\psi'\in C^{r-1}(\Tot_0(E), R)$ and $\varepsilon'\in C^r(X,\{x_0\},R)$ such that $\delta \psi'=p_0^*(\varepsilon')$ and $\psi'$ extends a cocycle $\in C^{r-1}(E_{0,x_0},R)$ that represents the preferred generator of $H^{r-1}(E_{0,x_0},R)$. The cohomology class of $\varepsilon'$ will then be $d_r(\mathbf{a})$.

Applying Lemmas \ref{directlimtrees} and \ref{zeroondegenerate} we find a cochain $\varepsilon''\in C^{r-1}(X,R)$ such that $\varepsilon=\varepsilon'+\delta\varepsilon''$ is zero on every singular simplex in every $T\in\mathcal{T}$. Then the restriction of the cochain $\psi=\psi'+p_0^*(\varepsilon'')$ to every fibre of $p_0$ is closed, and $\psi|_{E_{0,x_0}}=\psi'|_{E_{0,x_0}}$. Moreover, if $x\in X$, then using the lifting homotopy property we can extend the identity of $E_{0,x_0}$ to a map $E_{0,x_0}\times [0,1]\to \Tot_0(E)$ that covers a path from $x_0$ to $x$ in some $T\in\mathcal{T}$. We conclude that the restriction of $\psi$ to the fibre $E_{0,x}$ represents the preferred generator of $H^{r-1}(E_{0,x},R)\cong H^r(E_{x},E_{0,x},R)$. Note also that $\delta\psi=p_0^*(\varepsilon)$ naturally extends as a closed cochain $p^*(\varepsilon)$ on the whole of $\Tot(E)$.

The rest of the proof is adapted from the construction of smooth representatives of the Thom classes \cite{BottTu82}*{\S 12}. Let $\rho(x)$ be $-1$ for $x\in[0,1]$ and $0$ for $x\in(1,\infty)$. We introduce a metric on $E$ and denote the resulting norm by $||-||$. By setting $x\mapsto \rho(||x||)$ we get a function $\Tot(E)\to R$, which we also denote by $\rho$ and view as a singular $0$-cochain. Set $U_0=\{y\in\Tot(E)\mid ||y||\neq 0\}$, $U_1=\{y\in\Tot(E)\mid ||y||<1\}$, and $U_2=\{y\in\Tot(E)\mid ||y||>1\}$. The cochain $\delta(\rho\psi)=(\delta\rho)\smile\psi+\rho\delta\psi=
(\delta\rho)\smile\psi+\rho p^*_0(\varepsilon)\in C^*(U_0,R)$ extends as $\rho p^*(\varepsilon)$ to singular simplices in $U_1$. In fact, this extension belongs to $$C^*(U_0+U_1,U_2+\varnothing,R)=\{f\in \Hom(C_*(U_0)+C_*(U_1),R)\mid f|_{C_*(U_2)}=0\},$$ and by abuse of notation we will denote it also by $\delta(\rho\psi)$. We have an exact sequence of cochain complexes (cf.\ \cite{H02}*{\S 3.1})
$$0\to C^*(U_0+U_1,U_2+\varnothing,R)\to C^*(U_0+U_1,R)\to C^*(U_2,R)\to 0$$
where $C^*(U_0+U_1,R)=\Hom(C_*(U_0)+C_*(U_1),R)$, and $$H^*(C^*(U_0+U_1,U_2+\varnothing,R))\cong H^*(\Tot(E),U_2,R)\cong H^*(\Tot(E),\Tot_0,R).$$
The cochain $\delta(\rho\psi)$ is closed in $C^*(U_0+U_1,U_2+\varnothing,R)$: the restriction of $\delta(\rho\psi)$ to $U_0$ is exact, and after restricting to $U_1$ we get the closed cochain $\rho p^*(\varepsilon)$. Moreover, the restriction of $\delta(\rho\psi)$ to each fibre $E_{x}$ represents the preferred generator of $H^r(E_{x},E_{x}\cap U_2,R)\cong H^r(E_{x},E_{0,x},R)$. To see this recall that $\psi$ restricted to $E_{x}\cap U_2$ is closed and represents the preferred generator; the cochain $(1+\rho)\psi$ is an extension of $\psi|_{U_2}$ to $C^*(U_0+U_1,R)$. So $\delta\big(((1+\rho)\psi)|_{E_{x}}\big)=(\delta(\rho\psi))|_{E_{x}}$ represents the preferred generator of $H^r(E_{x},E_{x}\cap U_2,R)$ as we claimed.

We conclude that $\delta(\rho\psi)$ represents the Thom class of $E$, and $-\varepsilon$, the restriction of $\delta(\rho\psi)$ to the zero section, represents the Euler class.\qed

\subsection{Proof of Proposition \ref{degreediscr}}

\begin{lmm}\label{analytic_transversal}
Let $U_1$ and $U_2$ be open subsets of $\Co^m$ and $\Co^n$ respectively, and let $A$ be a smooth analytic subvariety of $U_1\times U_2$ of dimension $m$. Suppose $A$ intersects every subset $\{x\}\times U_2,x\in U_1$ at one point. Then all these intersections are transversal.
\end{lmm}

\begin{proof}
The projection $A\to U_1$ is a holomorphic bijection between smooth complex analytic varieties, so it is an isomorphism, which implies that the tangent space to $A$ at any point does not intersect $0\oplus\Co^n$.
\end{proof}

Let $Y$ be a smooth complex analytic variety, and let $V$ be a complex vector space. The projective space $\PP=\PP(V\oplus\Co)$ contains $V$ as an open subset. Suppose that $Z$ is a vector subbundle of the trivial vector bundle $V\times Y\to Y$, and that $W\subset V$ is an affine subspace of dimension $l$ such that $0\not\in W$ and $\overline{\Tot Z}$ intersects $\overline{W\times Y}$ transversally at one point $(s_0,y_0)\in W\times Y$, where the bar denotes the closure inside $\PP\times Y$. Let $\tilde s_1,\ldots, \tilde s_{l+1}\in W$ be elements that generate $W$ as an affine subspace, and let $s_1,\ldots,  s_{l+1}$ be the corresponding sections of the quotient bundle $Z'=(V\times Y)/Z$. Then $s_i$ are linearly dependent only at $y_0$, and their values at this point span a vector subspace of dimension $l$.

Let ${\tilde D}_{l+1}\subset\Tot(Z'{}^{\oplus (l+1)})$ be the {\it linear dependency locus}; its fibre over $y\in Y$ is the set of all $(l+1)$-tuples of elements of $Z'_y$ that are linearly dependent. Note that ${\tilde D}_{l+1}$ is an irreducible analytic subvariety of $\Tot(Z'{}^{\oplus (l+1)})$. It follows from our assumptions that the image of $s=(s_1,\ldots, s_{l+1})$ intersects ${\tilde D}_{l+1}$ at one point. Moreover, if $U$ is a compact neighbourhood of $y_0$, then the same will be true for $s'(U)$ for $s'$ an $(l+1)$-tuple of sections $\in\Gamma (Y,Z')$ that is sufficiently close to $s$: any intersection point $\in s'(U)\cap {\tilde D}_{l+1}$ gives one an intersection point $\in (W'\times U)\cap \Tot(Z)$ where $W'$ is the affine subspace of $V$ spanned by the components of $s'$.

\begin{lmm}\label{localcalc}
The intersection $s(Y)\cap {\tilde D}_{l+1}$ is transversal.
\end{lmm}

\begin{proof}
Let $W'\subset V$ be a vector subspace that maps isomorphically to $Z'_{y_0}$, and take a neighbourhood $U'\subset W'$ of the origin and a neighbourhood $U$ of $y_0$ in $Y$. The map $F:{U'}^{\times(l+1)}\times U\to\Tot Z'$ given by $$(\varepsilon_1,\ldots, \varepsilon_{l+1},y)\mapsto (s_1(y)+\varepsilon_1(y),\ldots, s_{l+1}(y)+\varepsilon_{l+1}(y))$$ is a local analytic isomorphism at $(0,y_0)$. Moreover, if $U$ and $U'$ are small enough, then for every $\varepsilon\in {U'}^{\times(l+1)}$ the image $F(\{\varepsilon\}\times U)$ intersects ${\tilde D}_{l+1}$ at a single point. Using Lemma~\ref{analytic_transversal} we see that all these intersections are transversal.
\end{proof}

{\it Proof of the proposition.} Let $p_V:V\times X\to V$ be the projection. We may and will assume that every point of $X'$ is smooth. Since $p_V$ is proper and $\widetilde\Sigma$ is closed in $V\times X$, we see that $\Sigma=p_V(\widetilde\Sigma)\subset V$ is closed and has dimension $\leq d+\dim V-r=\dim\widetilde\Sigma$. Set $\widetilde\Sigma'=\widetilde\Sigma|_{X'}, q=p_V|_{\widetilde\Sigma}$ and
$$\Theta=p_V\left(\widetilde\Sigma\setminus \widetilde\Sigma'\right)\cup p_V\left(\{y\in\widetilde\Sigma'\mid \ker dq|_{y}\neq 0\}\right).$$ Note that $\dim\Theta<d+\dim V-r$.

Suppose first that $\dim\Sigma= d+\dim V-r$. Let $W$ be an affine subspace of $V$ of dimension $r-d$ that $\not\ni 0$ and that intersects $\Sigma$ transversally at $\deg\Sigma$ smooth points $\not\in \Theta$. Set $\PP=\PP(V\oplus\Co)$. Note that the closures of $\Sigma$ and $W$ in $\PP$ have no intersection points at infinity, i.e.\ in $\PP\setminus V$. It follows from our assumptions that for any point in $\Sigma\cap W$ the preimage under $q$ consists of $k> 0$ transversal intersection points of $W\times X$ and $\widetilde\Sigma$, which all belong to $\widetilde\Sigma'$. Choose elements $s_1,\ldots, s_{r-d+1}\in W$ that generate $W$ as an affine subspace. Applying Lemma~\ref{localcalc} we see that these sections are linearly dependent at $k\deg\Sigma$ points of $X'$, and at each of these the tuple $s=(s_1,\ldots,s_{r-d+1})$ is transversal to the linear dependency locus. We conclude that, after pulling $E'$ and $s$ back to a resolution of singularities of $X$ if necessary, the class of the degeneracy cycle $D_{r-d+1}$ for $E'$ (see e.g.\ \cite{GH78}*{Chapter 3, \S 3}) is $k\deg\Sigma\in H_0(X,\Z)$. We now use the fact that this class is Poincar\'e dual to~$c_d(E')$ (see ibid.). 

If $\dim\Sigma<d+\dim V-r$, then $q^{-1}$ of every point of $\Sigma$ has positive dimension, and one can choose an affine subspace $W\subset V$ of dimension $r-d$ such that $\Sigma\cap W=\varnothing$, which by the above argument implies that $c_d(E)=0$.

\section{Tables}\label{app_tables}

\subsection{Examples for Sections~\ref{projquad} and \ref{completeflagvarieties}}

Tables~\ref{tab:projectivehypersurfaces}-\ref{tab:completeflagsg2} contain the multiples of $|\bar G_{Z(s)}|,s\in\reg{X,E}$ given by Theorem~\ref{mainthmprojhyp} for varieties considered in Sections~\ref{projquad} and \ref{completeflagvarieties} for small values of the parameters. (Recall that $\bar G$ is the image of $G$ in $\aut X$, and $\bar G_Z$ is the subgroup of $\bar G$ that preserves $Z$ but not necessarily pointwise, see Section \ref{automorphismgroups}.) In table~\ref{tab:hypersurfacesofquadrics} $Q_n$ is a non-singular quadric in $\PP^{n+1}(\Co)$; the notation used in tables~\ref{tab:completeflagssl3}-\ref{tab:completeflagsg2} was introduced in Section~\ref{completeflagvarieties}.

\begin{center}
\begin{table}[h]
	\caption{$G=SL_{n+1}(\Co)$, $X=\PP^n(\Co)$, $E=\Oh(d)$}
    \begin{tabular}[c]{|c|c|c|c|c|c|c|c|c|} \hline
	\diagbox{$d$}{$n$} & $2$ & $3$ & $4$  \\ \hline
	$3$ &$2^{3}\cdot 3^{3} $  &$2^{6}\cdot 3^{4}\cdot 5 $  &$2^{10}\cdot 3^{5}\cdot 5\cdot 11 $ \\ \hline 
	$4$ &$2^{5}\cdot 3^{3}\cdot 7 $  &$2^{9}\cdot 3^{6}\cdot 5\cdot 7 $  &$2^{11}\cdot 3^{10} \cdot 5\cdot 7\cdot 61 $ \\ \hline
	$5$ &$2^{6}\cdot 3 \cdot 5^{2} \cdot 13$  &$2^{12}\cdot 3^{2}\cdot 5^{3} \cdot 13 \cdot 17 $  &$2^{20}\cdot 3^{2}\cdot 5^{5}\cdot 13\cdot 17 \cdot 41 $ \\ \hline
    \end{tabular}
    \label{tab:projectivehypersurfaces}
\end{table}
\end{center}

\begin{center}
\begin{table}[h]
	\caption{$G=SO_{n+2}(\Co)$, $X=Q_{n}$, $E=\Oh(d)$}
    \begin{tabular}[c]{|c|c|c|c|c|c|c|c|c|} \hline
	\diagbox{$d$}{$n$} & $2$ & $3$ & $4$  \\ \hline
	$2$ &
	$2^{7}\cdot 3 $  &
	$2^{8}\cdot 5 $  &
	$2^{8}\cdot 3^{2}\cdot 5 $ \\ \hline 
	
	$3$ &
	$2^{5}\cdot 3^{2}\cdot 5 \cdot 17 $  &
	$2^{8}\cdot 3^{2}\cdot 5\cdot 7^{2} $  &
	$2^{10}\cdot 3^{5} \cdot 5\cdot 7\cdot 43 $ \\ \hline

	$4$ &
	$2^{9}\cdot 3^{2} \cdot 5 \cdot 17$  &
	$2^{10}\cdot 3^{4}\cdot 5 \cdot 71 $  &
	$2^{11}\cdot 3^{6}\cdot 5 \cdot 7 \cdot 13 \cdot 547 $ \\ \hline
    \end{tabular}
    \label{tab:hypersurfacesofquadrics}
\end{table}
\end{center}

\begin{center}
\begin{table}[h]
	\caption{$G=SL_3(\Co)$, $X=SL_3(\Co)/B$, $E=\Oh(m\omega_1+n\omega_2)$}
    \begin{tabular}[c]{|c|c|c|c|c|c|c|c|c|} \hline
	\diagbox{$m$}{$n$} & $1$ & $2$ & $3$ & $4$ \\ \hline
	$1$ &
	$0 $  &
	$2^{6}\cdot 3 \cdot 7 $  &
	$2^{5}\cdot 3^{3} \cdot 13$  &
	$2^{9}\cdot 3^{3}\cdot 5\cdot 7 $ \\ \hline 
	
	$2$ &
	$2^{6}\cdot 3 \cdot 7 $  &
	$2^{5}\cdot 3^{6}\cdot 5 $  &
	$2^{11}\cdot 3$  &
	$2^{5}\cdot 3 \cdot 5^{3}\cdot 11 $ \\ \hline

	$3$ &
	$2^{5}\cdot 3^{3} \cdot 13$  &
	$2^{11}\cdot 3 $  &
	$2^{8}\cdot 3^{5}\cdot 5 \cdot 7 \cdot 13 $  &
	$2^{6}\cdot 3^{4} $ \\ \hline

	$4$ &
	$2^{9}\cdot 3^{3}\cdot 5\cdot 7$  &
	$2^{5}\cdot 3 \cdot 5^{3}\cdot 11 $  &
	$2^{6}\cdot 3^{4} $  &
	$2^{6}\cdot 3^{5}\cdot 5^{2} \cdot 7 \cdot 13 \cdot 31 $ \\ \hline
    \end{tabular}
    \label{tab:completeflagssl3}
\end{table}
\end{center}

\begin{center}
\begin{table}[h]
	\caption{$G=Sp_4(\Co)$, $X=Sp_4(\Co)/B$, $E=\Oh(m\omega_1+n\omega_2)$}
    \begin{tabular}[c]{|c|c|c|c|c|c|c|c|c|} \hline
	\diagbox{$m$}{$n$} & $1$ & $2$ & $3$ & $4$ \\ \hline
	$1$ &
	$2^{3}\cdot 3\cdot 5 $  &
	$2^{8}\cdot 3^{3} \cdot 5 \cdot 7 \cdot 13$  &
	$2^{3}\cdot 3^{6} \cdot 5^{2} \cdot 13 \cdot 17$  &
	$2^{13}\cdot 3 \cdot 5^{2} \cdot 31 \cdot 41 $ \\ \hline 
	
	$2$ &
	$2^{8}\cdot 5 \cdot 37 $  &
	$2^{11}\cdot 3^{2}\cdot 5 \cdot 107$  &
	$2^{10}\cdot 3^{4} \cdot 5^{4}$  &
	$2^{14}\cdot 5^{2}\cdot 683 $ \\ \hline

	$3$ &
	$2^{3}\cdot 3^{2} \cdot 5 \cdot 7 \cdot 11 \cdot 17$  &
	$2\cdot 5^{2} \cdot 67 $  &
	$2^{3}\cdot 3^{2}\cdot 5 \cdot 709$  &
	$2^{8}\cdot 3^{2} \cdot 5 \cdot 7 \cdot 13 $ \\ \hline

	$4$ &
	$2^{8}\cdot 3^{5}\cdot 5\cdot 13$  &
	$2^{9}\cdot 3^{4} \cdot 5\cdot 7^{2}\cdot 59 $  &
	$2^{8}\cdot 3^{5} \cdot 5 \cdot 97 $  &
	$2^{11}\cdot 3^{3}\cdot 5 \cdot 19 $ \\ \hline
    \end{tabular}
    \label{tab:completeflagssp4}
\end{table}
\end{center}

\begin{center}
\begin{table}[H]
	\caption{$G=G_2$, $X=G_2/B$, $E=\Oh(m\omega_1+n\omega_2)$}
    \begin{tabular}[c]{|c|c|c|c|c|c|c|c|c|} \hline
	\diagbox{$m$}{$n$} & $1$ & $2$ & $3$ & $4$ 
	\\ \hline
	$1$ \rule{0pt}{9pt}&
	$2^{8}\cdot 3\cdot 7\cdot 229 $  &
	$\begin{array}{c} \rule{0pt}{9pt} 2^{10}\cdot 3^{2} \cdot 7 \\ \cdot 13 \cdot 19 \cdot 89\end{array}$  &
	$\begin{array}{c} \rule{0pt}{9pt} 2^{8}\cdot 3^{2} \cdot 7 \cdot 13 \\ \cdot 37 \cdot 163 \cdot 239\end{array}$ & $\begin{array}{c} \rule{0pt}{9pt}2^{9}\cdot 3^{2} \cdot 7^{2}\\ \cdot 61 \cdot 71 \cdot 2213 \end{array}$ 
	\\ \hline 
	
	$2$ \rule{0pt}{9pt}&
	$\begin{array}{c} \rule{0pt}{9pt} 2^{8}\cdot 3^{2} \cdot 7 \\ \cdot 13 \cdot 2473 \end{array}$  &
	$\begin{array}{c} \rule{0pt}{9pt} 2^{6}\cdot 3^{3}\cdot 5^{2} \cdot 7 \\ \cdot 19 \cdot 73 \cdot 467\end{array}$  &
	$\begin{array}{c} \rule{0pt}{9pt}2^{10}\cdot 3^{3} \cdot 7 \\ \cdot 37 \cdot 20333\end{array}$ & 
	$\begin{array}{c} \rule{0pt}{9pt} 2^{6}\cdot 3^{2}\cdot 5^{3}\cdot 7\\ \cdot 61 \cdot 199 \cdot 641 \end{array}$ 
	\\ \hline

	$3$ \rule{0pt}{9pt}&
	$\begin{array}{c} \rule{0pt}{9pt} 2^{9}\cdot 3^{4} \cdot 5\\ \cdot 7 \cdot 787 \end{array}$  &
	$2^{8}\cdot 3^{8} \cdot 7 \cdot 4273 $  &
	$\begin{array}{c} \rule{0pt}{9pt} 2^{9}\cdot 3^{4}\cdot 7 \cdot 11\\ \cdot 13 \cdot 61 \cdot 257\end{array}$ & $\begin{array}{c} \rule{0pt}{9pt}2^{8}\cdot 3^{4} \cdot 5^{2} \cdot 7\\ \cdot 538073 \end{array}$ 
	\\ \hline

	$4$ \rule{0pt}{9pt}&
	$\begin{array}{c} \rule{0pt}{9pt}2^{9}\cdot 3^{2}\cdot 5^{2}\cdot 7\\ \cdot 11 \cdot 31 \cdot 3491\end{array}$ &
	$\begin{array}{c} \rule{0pt}{9pt} 2^{6}\cdot 3 \cdot 7^{2}\cdot 13\\ \cdot 53\cdot 1877 \end{array}$  &
	$\begin{array}{c} \rule{0pt}{9pt} 2^{8}\cdot 3^{2} \cdot 5 \cdot 7 \\ \cdot 11 \cdot 139 \cdot 241 \end{array}$ & $\begin{array}{c} \rule{0pt}{9pt} 2^{8}\cdot 3\cdot 7\\ \cdot 743 \cdot 4517 \end{array}$
	\\ \hline
    \end{tabular}
    \label{tab:completeflagsg2}
\end{table}
\end{center}

\subsection{Examples for Section~\ref{fano}}\label{examples of fano} In all tables \ref{tab:intersectionofthreequadricsinP4}-\ref{tab:linearsectionofGr36} we use the notation of Section~\ref{fano}. In tables~\ref{tab:linearsectionofGr26_dim1} and~\ref{tab:linearsectionofGr25}-\ref{tab:linearsectionofGr36} $L^m$ denotes a projective subspace of codimension $m$ that is transversal to the Grassmannian. 

\begin{center}
\begin{table}[H]
    \caption{Smooth complete intersections of three quadrics in $\PP^4(\Co)$ ($k=1$, $n=4$, $d=2$, $r=2$)}
    \begin{tabular}[c]{|c|c|c|c|c|c|c|c|c|} \hline
	$l$ & 1 & 2 & 3 & 4 & 5 \\ \hline
	$\dim H_{14-2l}(\PP(E^*),\Z)$ & 1 & 2 & 2 & 2 & 2 \\ \hline 
	$\dim P^{2l-1}$ & 1 & 2 & 2 & 1 & 1 \\ \hline
	$i_l$ & 40 & 480 & 8 & 16 & 8 \\ \hline
    \end{tabular}
    \label{tab:intersectionofthreequadricsinP4}
\end{table}
\end{center}

\begin{center}
\begin{table}[h]
    \caption{Smooth complete intersections of two quadrics in $\PP^5(\Co)$ ($k=1$, $n=5$, $d=2$, $r=1$)}
    \begin{tabular}[c]{|c|c|c|c|c|c|c|c|c|} \hline
	$l$ & 1 & 2 & 3 & 4 & 5 & 6 \\ \hline
	$\dim H_{14-2l}(\PP(E^*),\Z)$ & 1 & 2 & 2 & 2 & 2 & 2 \\ \hline 
	$\dim P^{2l-1}$ & 1 & 2 & 1 & 1 & 1 & 1 \\ \hline
	$i_l$ & 30 & 96 & 4 & 8 & 4 & 12 \\ \hline
    \end{tabular}
    \label{tab:intersectionoftwoquadrics}
\end{table}
\end{center}

\begin{center}
\begin{table}[h]
    \caption{Smooth intersections of $\mathrm{Gr}(2,6)$ and $L^7$ ($k=2$, $n=4$, $d=1$, $r=6$)}
    \begin{tabular}[c]{|c|c|c|c|c|c|c|c|c|} \hline
	$l$ & 1 & 2 & 3 & 4 & 5 & 6 & 7\\ \hline
	$\dim H_{30-2l}(\PP(E^*),\Z)$ & 1 & 2 & 4 & 6 & 9 & 11 & 13 \\ \hline 
	$\dim P^{2l-1}$ & 1 & 2 & 2 & 2 & 2 & 2 & 1 \\ \hline
	$i_l$ & 66 & 1152 & 2 & 8 & 2 & 56 & 1 \\ \hline
    \end{tabular}
    \label{tab:linearsectionofGr26_dim1}
\end{table}
\end{center}

\begin{center}
\begin{table}[h]
    \caption{Smooth complete intersections of three quadrics in $\PP^6(\Co)$ ($k=1$, $n=6$, $d=2$, $r=2$)}
    \begin{tabular}[c]{|c|c|c|c|c|c|c|c|c|} \hline
	$l$ & 1 & 2 & 3 & 4 & 5 & 6 & 7\\ \hline
	$\dim H_{16-2l}(\PP(E^*),\Z)$ & 1 & 2 & 3 & 3 & 3 & 3 & 3 \\ \hline 
	$\dim P^{2l-1}$ & 1 & 2 & 2 & 1 & 1 & 1 & 1\\ \hline
	$i_l$ & 140 & 3360 & 112 & 16 & 8 & 24 & 8 \\ \hline
    \end{tabular}
    \label{tab:intersectionofthreequadrics}
\end{table}
\end{center}

\begin{center}
\begin{table}[h]
    \caption{Smooth intersections of $\mathrm{Gr}(2,5)$ and $L^3$ ($k=2$, $n=3$, $d=1$, $r=2$)}
    \begin{tabular}[c]{|c|c|c|c|c|c|c|c|c|} \hline
	$l$ & 1 & 2 & 3 & 4 & 5 \\ \hline
	$\dim H_{18-2l}(\PP(E^*),\Z)$ & 1 & 2 & 4 & 5 & 6 \\ \hline 
	$\dim P^{2l-1}$ & 1 & 2 & 2 & 1 & 1 \\ \hline
	$i_l$ & 5 & $\infty$ & 12 & 2 & 1 \\ \hline
    \end{tabular}
    \label{tab:linearsectionofGr25}
\end{table}
\end{center}

\begin{center}
\begin{table}[h]
    \caption{Smooth intersections of $\mathrm{Gr}(2,7)$ and $L^6$ ($k=2$, $n=5$, $d=1$, $r=5$)}
    \begin{tabular}[c]{|c|c|c|c|c|c|c|c|c|} \hline
	$l$ & 1 & 2 & 3 & 4 & 5 & 6 & 7\\ \hline
	$\dim H_{30-2l}(\PP(E^*),\Z)$ & 1 & 2 & 4 & 6 & 9 & 12 & 14 \\ \hline 
	$\dim P^{2l-1}$ & 1 & 2 & 2 & 2 & 2 & 2 & 1\\ \hline
	$i_l$ & 266 & 13140 & 4 & 8 & 4 & 56 & 1 \\ \hline
    \end{tabular}
    \label{tab:linearsectionofGr27}
\end{table}
\end{center}

\begin{center}
\begin{table}[H]
    \caption{Smooth intersections of $\mathrm{Gr}(3,6)$ and $L^5$ ($k=3$, $n=3$, $d=1$, $r=4$)}
    \begin{tabular}[c]{|c|c|c|c|c|c|c|c|c|} \hline
	$l$ & 1 & 2 & 3 & 4 & 5 & 6 \\ \hline
	$\dim H_{28-2l}(\PP(E^*),\Z)$ & 1 & 2 & 4 & 7 & 10 & 12 \\ \hline 
	$\dim P^{2l-1}$ & 1 & 2 & 2 & 2 & 2 & 1 \\ \hline
	$i_l$ & 240 & 15552 & 648 & 120 & 2 & 1  \\ \hline
    \end{tabular}
    \label{tab:linearsectionofGr36}
\end{table}
\end{center}


\begin{bibdiv}
\begin{biblist}
\bibselect{Division}
\end{biblist}
\end{bibdiv}

\end{document}